\title{A Spectral Gap Absorption Principle}
\author{Yuval Gorfine}
\address{Faculty of Mathematics and Computer Science, The Weizmann Institute of Science, 234 Herzl Street, Rehovot 7610001, ISRAEL}
\email{Yuval.Gorfine@gmail.com}
\keywords{Spectral Gap, Unitary Representations of Algebraic Groups, Rank-1 p-adic Groups}
\subjclass[2020]{22D10,22E50,22E46}
\crefname{theorem}{Theorem}{Theorems}
\crefname{theorem}{Theorem}{Theorems}
\crefname{mainthm}{Theorem}{Theorems}
\crefname{lemma}{Lemma}{Lemmas}
\crefname{lem}{Lemma}{Lemmas}
\crefname{remark}{Remark}{Remarks}
\crefname{prop}{Proposition}{Propositions}
\crefname{defn}{Definition}{Definitions}
\crefname{corollary}{Corollary}{Corollaries}
\crefname{cor}{Corollary}{Corollaries}
\crefname{section}{Section}{Sections}
\crefname{figure}{Figure}{Figures}
\crefname{quest}{Question}{Questions}
\newtheorem{theorem}{Theorem}[section]
\newtheorem{lemma}[theorem]{Lemma}
\newtheorem{proposition}[theorem]{Proposition}
\newtheorem{corollary}[theorem]{Corollary}
\theoremstyle{definition}
\newtheorem{definition}[theorem]{Definition}
\newtheorem{remark}[theorem]{Remark}
\newtheorem*{conjecture}{Conjecture}
\newcommand{\abs}[1]{\left| #1 \right|}
\begin{document}

\begin{abstract}
    We show that unitary representations of simply connected, semisimple algebraic groups over local fields of characteristic zero obey a spectral gap absorption principle: that is, that spectral gap is preserved under tensor products. We do this by proving that the unitary dual of simple algebraic groups is filtered by the integrability parameter of matrix coefficients. This is a filtration of closed ideals that captures every closed subset of the dual that doesn't contain the trivial representation. In other words, we show that a representation has a spectral gap if and only if there exists some $p < \infty$ such that its matrix coefficients are in $L^{p+\epsilon}(G)$ for every $\epsilon>0$. Doing this, we continue the work of Bader and Sauer in this area and prove a conjecture they phrased. We also use this principle to give an affirmative solution to a conjecture raised by Bekka and Valette: the image of the restriction map from a semisimple group to a lattice is never dense in Fell topology. 
\end{abstract}
\maketitle
\section{Introduction}
Let $G$ be a locally compact group, and consider its unitary representation theory. A natural question that may be asked in this area is the question of absorption, under tensor products, of certain properties of representations: given a unitary representation $\pi$ of $G$ that possess a certain property, does the representation $\pi \otimes \rho$ possess this property, for every unitary representation $\rho$? (here and everywhere, $\otimes$ denotes the Hilbertian tensor product). This is a fair question, as tensor products occur naturally, and we want nice properties of representations to prevail.

If $G$ is not amenable, harmonic analysis on $G$ takes a turn from classical harmonic analysis, as it is no longer true that every representation of $G$ is weakly contained in the left regular representation, $\lambda$. The representations that are weakly contained in the left regular representation are called \textit{tempered} representations. The distinction between tempered and non-tempered representations is essential, and it is useful to know that tempered representations obey an absorption principle: it is an easy, classical result that temperedness is preserved under tensor products (this is an immediate corollary from Fell's absorption principle).

We now restrict our attention to the case of semisimple groups, and consider the notion of \textit{spectral gap}. More precisely, let $G_i$ be the group of $k_i$-points of an almost simple, connected and simply connected algebraic $k_i$-group that is $k_i$-isotropic, where $k_i$ is some local field of characteristic zero. Consider the group $G=\prod_{i=1}^n G_i$. A representation is said to have a \textit{spectral gap} if it doesn't admit almost invariant vectors, that is, if its support doesn't contain the trivial representation. In a way, tempered representations are exactly the ones having a "maximal spectral gap". Hence we consider the following theorem, which is our main result, as a generalization of the absorption of temperedness:
\begin{theorem}[Theorem \ref{main theorem}]\label{main_intro}
    Let $\pi$ and $\rho$ be two unitary representations of $G$, such that $1 \nprec \pi$. Then $1 \nprec \pi \otimes \rho$. That is, spectral gap is preserved under tensor products.
\end{theorem}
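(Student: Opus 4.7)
The plan is to deduce Theorem~\ref{main_intro} from the integrability filtration advertised in the abstract. The key input is the biconditional that, for every almost simple, simply connected, isotropic algebraic group $H$ over a local field of characteristic zero, a unitary representation $\sigma$ of $H$ satisfies $1 \nprec \sigma$ if and only if there exists $p < \infty$ such that all matrix coefficients of $\sigma$ lie in $L^{p+\epsilon}(H)$ for every $\epsilon > 0$. Once this characterization is in hand, spectral gap absorption reduces to the preservation of this integrability condition under tensor products, and this preservation is essentially the observation that matrix coefficients of a unitary representation $\rho$ are uniformly bounded by $|\langle \rho(g)w, w' \rangle| \le \|w\|\,\|w'\|$.

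Concretely, I would proceed in four steps. First, reduce to a single simple factor: since $G = \prod_{i=1}^n G_i$ with each $G_i$ non-compact and almost simple, the hypothesis $1 \nprec_G \pi$ should upgrade to $1 \nprec_{G_i} \pi|_{G_i}$ for at least one index $i$. This step is delicate, since spectral gap on a product does not obviously descend to an arbitrary factor; the natural argument is to disintegrate $\pi$ over the unitary dual, using that the irreducibles of a product of type-I groups are outer tensor products, and then track the support of the disintegration near the trivial point, or alternatively to extend the filtration from the simple to the semisimple setting via a product construction. Second, apply the filtration to $\pi|_{G_i}$ to produce a finite $p$ for which all matrix coefficients of $\pi|_{G_i}$ lie in $L^{p+\epsilon}(G_i)$ for every $\epsilon > 0$. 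Third, exploit the identity
\[
\langle (\pi \otimes \rho)(g)(v \otimes w),\, v' \otimes w' \rangle = \langle \pi(g)v, v'\rangle\,\langle \rho(g)w, w'\rangle
\]
together with the uniform bound on matrix coefficients of $\rho$ to see that matrix coefficients of $\pi \otimes \rho$ arising from simple tensors, restricted to $G_i$, remain in $L^{p+\epsilon}(G_i)$; density of simple tensors in the Hilbertian tensor product and closedness of the $L^{p+}$ ideal then promote this to a dense family of matrix coefficients of $\pi \otimes \rho$. Fourth, invoke the filtration in reverse direction to conclude $1 \nprec_{G_i} (\pi \otimes \rho)|_{G_i}$, whence $1 \nprec_G \pi \otimes \rho$, since any $G$-almost invariant net of vectors is in particular $G_i$-almost invariant.

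The hard part is unquestionably the filtration theorem itself, which is the technical heart of the paper: producing, for an arbitrary $\sigma$ with $1 \nprec \sigma$, an explicit finite integrability exponent appears to require detailed rank-one analysis in the $p$-adic setting, building upon the Bader--Sauer program flagged in the abstract. A secondary but nontrivial hurdle is the reduction in Step~1 from the semisimple $G$ to a single simple factor, where the failure of elementary descent of almost-invariance across a direct product decomposition must be handled by a structural or measure-theoretic argument rather than by naive manipulation of almost-invariant vectors.
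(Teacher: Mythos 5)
For the single almost-simple group, your Steps~2--4 track the paper's argument faithfully: apply the filtration theorem to obtain $p$ with $\operatorname{Supp}(\pi) \subset \hat{G}_p$, observe that the $L^{p+\epsilon}$ integrability condition is preserved by tensoring with any unitary representation (because matrix coefficients of $\rho$ are bounded), use the Kunze--Stein property via de Laat--Siebenand to get that $\hat{G}_p$ is a \emph{closed} ideal, and conclude $1 \notin \hat{G}_p \supset \operatorname{Supp}(\pi\otimes\rho)$. That is precisely what the paper does.

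Your Step~1, however, contains a genuine error that your caveat does not repair. The assertion that $1 \nprec_G \pi$ forces $1 \nprec_{G_i} \pi|_{G_i}$ for \emph{some} index $i$ is simply false. Take $G = G_1 \times G_2$ and $\pi = (\lambda_{G_1} \times 1_{G_2}) \oplus (1_{G_1} \times \lambda_{G_2})$, where $\times$ is outer tensor. Since $G_1, G_2$ are non-amenable, $1_{G_1}\times 1_{G_2}$ is not in the closure of $\operatorname{Supp}(\pi)$, so $\pi$ has a spectral gap on $G$; yet $\pi|_{G_1}$ visibly contains $1_{G_1}$ as a subrepresentation, and likewise for $G_2$. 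Thus no single factor can carry the argument. Your proposed alternative --- extending the filtration to the semisimple setting --- is also blocked: the same representation $\lambda_{G_1}\times 1_{G_2}$ has a spectral gap on $G$ but constant matrix coefficients along $G_2$, so they lie in no $L^p(G)$ with $p < \infty$. This is exactly why the paper states that the filtration theorem only applies to almost simple $G$, while the absorption theorem holds for semisimple $G$.

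The paper's route for the semisimple case runs in the opposite direction and avoids any restriction to a factor. Assume $1 \prec \pi \otimes \rho$; extract a sequence of irreducibles in $\operatorname{Supp}(\pi\otimes\rho)$ converging to the trivial, realize them (via the closed-ideal lemma and the Type~I decomposition of irreducibles as outer tensor products) as $(\pi_j^1 \otimes \rho_j^1) \times \cdots \times (\pi_j^n \otimes \rho_j^n)$ with $\pi_j^i \in \hat{G_i}$ in the support of $\pi$'s $i$-th factor; since $\pi_j^i \otimes \rho_j^i \to 1_{G_i}$, the already-proven \emph{simple-case} absorption forces $\pi_j^i \to 1_{G_i}$ for every $i$; hence $\pi_j^1\times\cdots\times\pi_j^n \to 1_G$, contradicting $1\nprec\pi$. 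The key point you are missing is that one deduces almost invariance of $\pi$ on \emph{every} factor simultaneously, by contradiction, rather than trying to locate in advance a factor where $\pi$ alone has spectral gap.
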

 
 To motivate the proof of the theorem, we will illustrate here a proof of the temperedness absorption theorem, that only applies to semisimple Lie groups and semisimple algebraic groups over local fields (which are our object of interest in this paper), even though this is an easy theorem. For semisimple groups, the set of irreducible tempered representations, the \textit{tempered dual}, was characterized by Harish-Chandra through the integrability parameter of matrix coefficients: an irreducible representation is in the tempered dual if and only if its matrix coefficient functions are in $L^{2+\epsilon}(G)$ for every $\epsilon>0$. This description of the tempered dual shows that it is a \textit{closed ideal}, that is, that the support of the tensor product of a representation from this set with any other representation is supported on this set as well. The way we wish to generalize this idea to other absorption principles is the following: given a property for which we want to prove an absorption principle, we will try to find many closed ideals in $\hat{G}$, each of them consisting of representations that admit this property, such that any closed subset of representations that admit this property will end up being a subset of one of these ideals. For tempered representations we found one closed ideal that captures the entire phenomenon; for spectral gap we will need to use an infinite family of closed ideals. To this end, we introduce the closed ideals $\hat{G}_p$:
\begin{definition}[Definition \ref{p_ideals}]
    The set $\hat{G}_p \subset \hat{G}$ is the set of all irreducible representations $\pi \in \hat{G}$ for which there exists a dense subspace $H_0 \subset H_\pi$ such that for every $u,v\in H_0$, the matrix coefficient function $g \rightarrow \langle\pi(g) u,v \rangle$  is in $L^{p+\epsilon}(G)$ for every $\epsilon>0$.
\end{definition}
Unlike $\hat{G}_2$, the set of tempered representations, the fact that $\hat{G}_p$ is a closed subset (and in fact a closed ideal) for semisimple algebraic groups is more difficult. Following the work of Samei and Wiersma \cite{Samei-Wiersma} on exotic $C^*$-algebras, it was proved in \cite{deLaat-Siebenand} for groups admitting the Kunze-Stein property. Simple algebraic groups over local fields are Kunze-Stein, and hence the set of subsets of the form $\hat{G}_p$ is a potential filtration of closed ideals for the unitary dual, that can prove the spectral gap absorption principle. For the filtration to work, we need to restrict our attention to the case of simple algebraic groups rather than semisimple. Bader and Sauer used a filtration of the unitary dual by the ideals $\hat{G}_p$ to prove theorem \ref{main_intro} in some cases \cite[Section~4.2]{Bader-Sauer}. They proved the theorem for quasi-split p-adic groups, and explained that the higher rank case and the archimedean case were already handled by previous authors. The full p-adic rank one case was left in their paper as a conjecture \cite[Conjecture~4.6]{Bader-Sauer}. We give in this paper a positive answer to their conjecture, hence completing the proof for all almost simple, simply connected algebraic groups over local fields. That is, we prove that the ideals $\hat{G}_p$ indeed filtrate the closed subsets of $\hat{G}$ that don't contain the trivial representation.
\begin{theorem}[Theorem \ref{filtration}]\label{filtrationIntro}
    Let $G=\textbf{G}(k)$ be the group of $k$-points of an almost simple, connected and simply connected algebraic $k$-group $\textbf{G}$ that is $k$-isotropic, where $k$ is some local field of characteristic zero. Let $A$ be a closed subset of $\hat{G}$ such that $1 \notin A$. Then, there exists a $p \in [2,\infty)$ such that $A \subset \hat{G}_p$. 
\end{theorem}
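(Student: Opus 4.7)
The approach is to reduce to the one outstanding case---simply connected almost simple $k$-isotropic groups of $k$-rank one over a non-archimedean local field $k$ of characteristic zero that are not quasi-split---since Bader and Sauer have established the conclusion in all other settings (quasi-split $p$-adic, higher $k$-rank $p$-adic, and archimedean). In this remaining case, $G$ acts properly on a Bruhat-Tits tree, and, up to isogeny, the possible groups form a short list: inner forms $\SL_2(D)$ for a quaternion division algebra $D/k$, unitary groups $\mathrm{SU}(h)$ for Hermitian forms of Witt index one, and a few related classical forms.

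The structural observation that drives the argument is the lower semi-continuity of the integrability exponent
\[
p(\pi) \; := \; \inf\{\, p \geq 2 : \pi \in \hat{G}_p \,\},
\]
which is immediate from the fact that each $\hat{G}_p$ is Fell-closed. In terms of this function, the theorem is equivalent to the assertion that the only possible Fell-accumulation point of a sequence $(\pi_n) \subset \hat{G}$ with $p(\pi_n) \to \infty$ is the trivial representation. Indeed, granted this, if $A$ is Fell-closed with $1 \notin A$, then $\sup_{\pi \in A} p(\pi)$ must be finite, and any strictly larger $p$ witnesses $A \subset \hat{G}_p$.

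I would then prove the accumulation statement using the explicit description of $\hat{G}$ in the rank-one $p$-adic case. Outside of the tempered dual (where $p(\pi) = 2$ uniformly) and at most finitely many isolated non-tempered points, the non-tempered part of $\hat{G}$ decomposes into finitely many one-parameter complementary series $\{\pi_s\}$ whose matrix coefficients can be computed directly via a Macdonald-type formula on the tree, exhibiting $p(\pi_s)$ as an explicit continuous, monotone function of $s$ that diverges precisely at the endpoint where $\pi_s$ Fell-converges to $1$. The main obstacle is the treatment of non-spherical non-tempered representations for the outer forms $\mathrm{SU}(h)$, which are not covered by the spherical theory; here one must invoke the finer classification of the unitary dual of non-split rank-one $p$-adic groups to show that the integrability exponent of these representations is controlled by the spherical bound. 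Once this is in place, the uniform estimate $\sup_{\pi \in A} p(\pi) < \infty$ follows, completing the proof.
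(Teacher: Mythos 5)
Your reduction to the rank-one non-archimedean case and your reformulation via lower semicontinuity of $p(\pi) = \inf\{p : \pi \in \hat{G}_p\}$ both match the paper's overall strategy, and the key intuition---that the integrability exponent of a Langlands quotient $J_{P,\sigma,\chi_{s\rho}}$ is governed by the complementary-series parameter $s$ and blows up only as $s \to 1$---is the right one. However, there are genuine gaps in the execution.

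First, the claim that ``the non-tempered part of $\hat{G}$ decomposes into finitely many one-parameter complementary series'' is not correct. In the rank-one $p$-adic case, $M = Z(A)$ is compact, every $\sigma \in \hat{M}$ is supercuspidal, and there are infinitely many such $\sigma$; every Weyl-invariant $\sigma$ may contribute a complementary series. What is finite is the set of possible \emph{endpoints} (reducibility points) of these complementary series, and this finiteness is exactly the content of a recent and nontrivial theorem of Abe and Herzig (their Remark~3.61), which the paper quotes as its Theorem~\ref{Abe-Herzig}. Your proposal does not identify this input, and the argument does not close without it: with infinitely many complementary series present, one needs the finiteness of reducibility points to get a uniform bound $\sup_{\pi \in A} p(\pi) < \infty$.

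Second, the Macdonald-type formula on the tree computes matrix coefficients only for spherical (Iwahori-spherical) representations. You acknowledge that non-spherical non-tempered representations are ``not covered by the spherical theory,'' but the proposed remedy---invoking ``the finer classification of the unitary dual of non-split rank-one $p$-adic groups''---is not an available tool; such a classification is not worked out in the generality needed. The paper sidesteps this entirely: it computes $p(\pi_{\sigma,s}) = 2/(1-s)$ for arbitrary $\sigma$ via Silberger's theory of exponents and Jacquet modules (Lemma~\ref{IntCond}), not via a Macdonald formula, and then uses Abe--Herzig's finiteness plus a Borel--Wallach uniform-boundedness argument (Lemma~\ref{UnifBound}, which is where simple-connectedness is used to show that only $\sigma = 1$ can reach $s=1$). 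These two lemmas are the actual content of the proof, and your proposal supplies neither.
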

The analogy between $\hat{G}_p$ and the size of the spectral gap comes from the simplest case, that of $SL_2(\mathbb{R})$. $SL_2(\mathbb{R})$ has only one complementary series, parameterized by the segment $[0,1]$, where in $0$ we have a tempered representation from the principal series and in $1$ lies the trivial representation (as well as two discrete series representations). Here, as the parameter $s$ runs from $0$ to $1$, the representations (that converge to the trivial representation) escape every ideal of the form $\hat{G}_p$. In other words, $\frac{2}{p}$ may be suggested as a measure for the size of the spectral gap of a representation supported in $\hat{G}_p$, which gives sense in this specific case to the idea that "tensor products can't decrease the spectral gap".

To prove theorem \ref{filtrationIntro} we employ the Langlands classification theorem, and use both classical and recent results to control the integrability parameter of the matrix coefficients of representations coming from different sets of complementary series. We deduce that a sequence of representations that escapes every ideal of the form $\hat{G}_p$ already converges to the trivial representation. 

As an application of the spectral gap absorption principle, we give an affirmative solution to a conjecture raised by Bekka and Valette in \cite{BeV}. They considered the restriction map from $\widetilde{G}$ to $\widetilde{\Gamma}$, where $G$ is a connected semisimple Lie group, $\Gamma$ is a lattice and $\widetilde{H}$ denotes the separable dual of a locally compact, second countable group $H$ (that is, the set of unitary representations on separable Hilbert spaces up to unitary equivalence, endowed with the Fell topology). Bekka and Valette suggested that the image of this map is never dense in Fell topology, namely that there exist unitary representations of $\Gamma$ that are not weakly contained in any representation restricted from $G$. They proved this conjecture in many cases. We complete the proof for all semisimple groups, including semisimple groups over non-archimedean local fields of characteristic zero. More precisely, we show that non-trivial finite dimensional representations are never weakly contained in a representation restricted from the group. We do this using the spectral gap absorption principle, together with the fact that the $G$-representation $L^2_0(G/\Gamma)$ has a spectral gap. 
\begin{theorem}[Theorem \ref{BekVal}]\label{BekValIntro}
    Let $G$ a group that belongs to one of the following families:
    \begin{enumerate}
        \item Connected semisimple Lie groups with finite center.
        \item $G=\prod_{i=1}^n \textbf{G}_i(k_i)$, where each $k_i$ is a local field of characteristic zero and $\textbf{G}_i$ is an almost simple, connected and simply connected algebraic $k_i$-group, which is $k_i$-isotropic.
    \end{enumerate}
    Let $\Gamma$ be a lattice in $G$. Then, the image of the restriction map from $\tilde{G}$ to $\tilde{\Gamma}$ has a non-dense image in Fell topology. Moreover, every non-trivial finite dimensional representation of $\Gamma$ is not weakly contained in any representation restricted from $G$. 
\end{theorem}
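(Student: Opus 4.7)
I would approach the proof by contradiction. Suppose a non-trivial finite-dimensional unitary representation $\sigma$ of $\Gamma$ is weakly contained in $\pi|_\Gamma$ for some $\pi\in\tilde G$; note that the non-density claim of the theorem follows at once from such a statement. After replacing $\sigma$ by a non-trivial irreducible subrepresentation, I may assume that $\sigma$ is irreducible.

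The plan combines three ingredients. First, Mackey's projection formula together with the continuity of unitary induction under weak containment yields
\[
\operatorname{Ind}_\Gamma^G \sigma \;\prec\; \operatorname{Ind}_\Gamma^G(\pi|_\Gamma) \;\cong\; \pi \otimes L^2(G/\Gamma) \;=\; \pi \oplus \bigl(\pi \otimes L^2_0(G/\Gamma)\bigr).
\]
Second, the spectral gap absorption principle (Theorem~\ref{main_intro}), applied to the $G$-representation $L^2_0(G/\Gamma)$ (which has a spectral gap, a classical fact for lattices in semisimple groups), gives that $\pi \otimes L^2_0(G/\Gamma)$ has a spectral gap. Third, by Frobenius reciprocity for lattice inclusions, since $\sigma$ is finite-dimensional non-trivial irreducible it has no $\Gamma$-invariant vectors, and the weak-containment version shows that $\operatorname{Ind}_\Gamma^G\sigma$ has a spectral gap as a $G$-representation.

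To derive the contradiction, I invoke the filtration theorem~\ref{filtrationIntro}: since $\operatorname{Ind}_\Gamma^G\sigma$ has a spectral gap, its support lies inside $\hat G_p$ for some $p<\infty$, and its matrix coefficients belong to $L^{p+\epsilon}(G)$ for a dense subspace of vectors. Restricting these matrix coefficients to $\Gamma$ and using the identification of $\sigma$ inside $(\operatorname{Ind}_\Gamma^G\sigma)|_\Gamma$ via the Mackey decomposition, one extracts an $\ell^{p+\epsilon}(\Gamma)$-type integrability statement for a matrix coefficient of $\sigma$, contradicting the fact that the matrix coefficients of a non-trivial finite-dimensional representation of $\Gamma$ are bounded but non-decaying.

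The main obstacle lies in this final step: Fell-topology weak containment only guarantees pointwise approximation on finite sets, so one cannot naively pass $L^p$-integrability from the $G$-side to a $\Gamma$-side contradiction without a restriction inequality for positive-definite matrix coefficients and a careful Mackey extraction of $\sigma$ out of $(\operatorname{Ind}_\Gamma^G\sigma)|_\Gamma$. A parallel route I would pursue is to tensor $\sigma\prec\pi|_\Gamma$ directly on the $\Gamma$-side with $L^2_0(G/\Gamma)|_\Gamma$: absorption together with the stability of spectral gap under restriction to a lattice forces $\sigma\otimes L^2_0(G/\Gamma)|_\Gamma$ to inherit a spectral gap as a $\Gamma$-representation, equivalent (since $\sigma$ is finite-dimensional) to $\bar\sigma\not\prec L^2_0(G/\Gamma)|_\Gamma$; the task then reduces to establishing that every non-trivial finite-dimensional unitary representation of $\Gamma$ lies in the $\Gamma$-support of $L^2_0(G/\Gamma)|_\Gamma$, which for arithmetic lattices can be approached through Mackey decomposition at commensurable double cosets.
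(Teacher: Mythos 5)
Your proposal correctly identifies the two key tools — spectral gap absorption and the weak cocompactness of $\Gamma$ — but both of the routes you sketch have genuine gaps, and you are missing the one observation that makes everything click.

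On the first route: the ingredients you list do not, on their own, produce a contradiction. You establish $\operatorname{Ind}_\Gamma^G \sigma \prec \pi \oplus (\pi \otimes L^2_0(G/\Gamma))$, that $\pi\otimes L^2_0(G/\Gamma)$ has a spectral gap, and that $\operatorname{Ind}_\Gamma^G \sigma$ has a spectral gap; but since $\pi$ is arbitrary (and may itself weakly contain $1_G$), none of these three facts contradict one another, which is presumably why you then reach for the filtration theorem. That final step, however, fails as you yourself suspect: membership in $\hat G_p$ controls $L^{p+\epsilon}(G)$-integrability for a dense subspace of the \emph{irreducibles} in the support, and even if one could bootstrap this to a matrix coefficient of $\operatorname{Ind}_\Gamma^G\sigma$ itself, restriction of an $L^p(G)$-function to a lattice $\Gamma$ says nothing about $\ell^p(\Gamma)$-decay; there is no restriction inequality of this type. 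Also note that your justification "$\sigma$ has no invariant vectors, hence $\operatorname{Ind}_\Gamma^G\sigma$ has a spectral gap, by Frobenius reciprocity" is too loose: weak Frobenius reciprocity alone does not suffice, and what is really needed is weak cocompactness of $\Gamma$ together with Margulis's argument \cite[Proposition~III.1.11]{Margulis}. On the second route, the plan is conceptually off: the absorption principle is a statement about $G$-representations, not $\Gamma$-representations, so "$\sigma\otimes L^2_0(G/\Gamma)|_\Gamma$ inherits a spectral gap as a $\Gamma$-representation" does not follow from it; and the reduction to "every non-trivial finite-dimensional $\Gamma$-representation lies in the support of $L^2_0(G/\Gamma)|_\Gamma$" is both unproved and not obviously true.

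The missing idea is to tensor with the contragredient \emph{before} inducing. Since $\sigma$ is finite-dimensional, it is not weakly mixing: $1_\Gamma \le \sigma\otimes\bar\sigma$. Combined with $\sigma\prec\pi|_\Gamma$, this gives $1_\Gamma \prec \bar\sigma\otimes\pi|_\Gamma$. Inducing to $G$ and applying the Mackey projection formula $\operatorname{Ind}_\Gamma^G(\bar\sigma\otimes\pi|_\Gamma)\cong\operatorname{Ind}_\Gamma^G(\bar\sigma)\otimes\pi$, one gets $1_G\le L^2(G/\Gamma)=\operatorname{Ind}_\Gamma^G(1_\Gamma)\prec\operatorname{Ind}_\Gamma^G(\bar\sigma)\otimes\pi$. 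Now $\operatorname{Ind}_\Gamma^G(\bar\sigma)$ has a spectral gap (here is where weak cocompactness is actually used), and the absorption principle applied to $\operatorname{Ind}_\Gamma^G(\bar\sigma)$ and $\pi$ yields the contradiction directly — no filtration theorem, no $\ell^p$ estimates on $\Gamma$, and no control on $\pi$ itself is needed. The crucial point is that tensoring with $\bar\sigma$ trades "controlling where $\pi$ sits" for "producing a copy of the trivial representation," which is exactly what the absorption principle is equipped to exclude.
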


\subsection{Structure of the Paper}
In section \ref{sec2} we establish the basic notions and terminology that we need, most of which are representation theoretic. We first discuss unitary representations of locally compact groups, then restrict our attention to tdlc groups and discuss smooth and admissible representations, and then further restrict our attention to reductive p-adic groups. In section \ref{sec3} we explain how to deduce theorem \ref{main_intro} from theorem \ref{filtrationIntro}. Section \ref{sec4} is then devoted to prove theorem \ref{filtrationIntro}. In section \ref{sec5} we use the spectral gap absorption principle to prove theorem \ref{BekValIntro}. In the appendix, we give the classification of rank-1 p-adic groups, due to Tits. The non-quasi-split groups in this list are the ones for which our results \ref{main_intro} and \ref{filtrationIntro} are new.

\subsection{Acknowledgments}
I would like to express my gratitude to many people who helped with this project. First of all, I want to thank my advisor Uri Bader for introducing me to this problem and to this beautiful area of mathematics, and for helping me a lot throughout the way. I am grateful for the useful advice that I received from numerous people (some of them may not even know that they have given me useful advice - but they have): Erez Lapid, Dmitry Gourevitch, Alex Lubotzky, Yair Glasner, Roman Sauer and Bachir Bekka. I would also like to thank Marko Tadic, Gordan Savin and Michael Cowling for communicating critical advice to my advisor Uri, that was later passed on to me. Last but not least I want to thank Alon Dogon, Michael Glasner and Guy Kapon, who helped and supported me and with whom I have had many discussions regarding this project.

\section{Preliminaries}\label{sec2}
In the preliminaries, we recall the basics of representation theory of reductive p-adic groups. We distinguish  two different categories: unitary representations and smooth, admissible representations. We start by fixing some terminologies about the unitary dual of a locally compact group. We then recall the basics of (smooth) representation theory of tdlc groups, and explain the relation between the two theories (admissible and unitary representations), which applies in the reductive case due to a result of Bernstein. After that, we recall the machinery of parabolic induction and quote the Langlands classification theorem.

\subsection{The Unitary Dual}
For an introduction to the unitary dual, we direct the reader to \cite[Chapter~7]{Folland}, to \cite[Part~II]{BekHarVal}, or to \cite{Dixmier}, which are our main references here. 

Let $G$ be a locally compact group. We denote by $\hat{G}$  the \textit{unitary dual} of $G$, that is, the set of (equivalence classes of) irreducible, unitary representations of $G$. A unitary representation is assumed to be on a Hilbert space (and not just pre-Hilbert) and strongly continuous. Irreducibility is in the topological sense - no closed invariant subspace exists. By the \textit{trivial representation} of $G$ we refer to the one dimensional representation by the identity operator, and sometimes denote it by $1$. Throughout this section, all representations are assumed to be unitary.

One endows the unitary dual, as a set of points, with several structures. We will mostly be concerned with the standard way of defining a topology on this set, which is called the \textit{Fell Topology}. 
\begin{definition}[Weak Containment]
    Let $S$ be a set of representations of $G$, and $\pi$ another representation. Then $\pi$ is said to be \textbf{weakly contained} in $S$, denoted by $\pi \prec S$, if for all $f\in L^1(G)$, one has the following inequality in operator norms:
    \[\Vert \pi(f) \Vert \leq \sup_{\rho \in S} \Vert \rho (f)\Vert\]
\end{definition}
We will also need another definition of weak containment, which is equivalent to the one above. We recall that if $\pi$ is a representation of $G$, and $u,v \in H_\pi$, then the function $c_{u,v}:G \rightarrow \mathbb{C}$ defined by $g \rightarrow \langle \pi(g)u,v \rangle$ is called the \textit{matrix coefficient} of $\pi$ associated with $u$ and $v$.
\begin{definition}[Second Definition of Weak Containment]\label{SecDefWeakCont}
    Let $S$ be a set of representations of $G$, and $\pi$ another representation. Then $\pi$ is weakly contained in $S$ if every matrix coefficient of $\pi$ can be approximated, uniformly on compact subsets of $G$, by finite sums of matrix coefficients of representations from $S$.
\end{definition}
The notion of weak containment yields one way to define the Fell topology, which will be the most convenient one for us.
\begin{definition}[Fell Topology]
    A subset $S$ of $\hat{G}$ is closed in the Fell topology if it contains all irreducible representations that are weakly contained in it.
\end{definition}
A locally compact group is called \textit{Type I} if its unitary dual, endowed with the Fell topology, is $T_0$. If it is furthermore $T_1$, the group is called \textit{CCR}. Glimm \cite{Glimm} showed that the $T_1$ separation property of the unitary dual can be detected by the image of $C^*(G)$ under irreducible representations:
\begin{theorem}\label{ccrCompact}
    $G$ is CCR if and only if for every $f \in L^1(G)$ and every $\pi \in \hat{G}$, one has that $\pi(f)$ is a compact operator.
\end{theorem}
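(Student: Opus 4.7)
The plan is to translate the statement into a question about the group C*-algebra $C^*(G)$. Every $\pi \in \hat{G}$ extends uniquely to a non-degenerate representation of $C^*(G)$, and since $L^1(G)$ sits densely inside $C^*(G)$, the hypothesis ``$\pi(f)$ is compact for every $f \in L^1(G)$'' is equivalent to $\pi(C^*(G)) \subset K(H_\pi)$. Moreover, by the first definition of weak containment given in the paper, $\pi \prec \sigma$ is equivalent to $\ker\sigma \subset \ker\pi$ (kernels taken inside $C^*(G)$), so the Fell topology on $\hat{G}$ is the pull-back of the Jacobson topology on $\mathrm{Prim}(C^*(G))$ via $\pi \mapsto \ker\pi$. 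This dictionary is my first step.

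For the easier direction $(\Leftarrow)$, I would argue as follows. Assume $\pi(C^*(G)) \subset K(H_\pi)$ for every $\pi \in \hat{G}$. Then $\pi(C^*(G))$ is an irreducible C*-subalgebra of $K(H_\pi)$, and by a classical argument (use functional calculus on a self-adjoint compact element to extract a spectral projection of minimal rank, then use irreducibility to force that projection to be rank one, then sweep out all rank-one operators) this forces $\pi(C^*(G)) = K(H_\pi)$. Since $K(H_\pi)$ is simple, $\ker\pi$ is a maximal closed two-sided ideal, and distinct $\pi_1, \pi_2 \in \hat{G}$ have distinct kernels, because an isomorphism of compact operator algebras is induced by a unitary. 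Hence if $\pi_1 \neq \pi_2$ satisfied $\pi_1 \prec \pi_2$, the containment $\ker\pi_2 \subsetneq \ker\pi_1 \neq C^*(G)$ would violate the maximality of $\ker\pi_2$. Each singleton in $\hat{G}$ is therefore closed, i.e.\ $\hat{G}$ is $T_1$.

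For the harder direction $(\Rightarrow)$, I would first establish that every primitive ideal of $C^*(G)$ is maximal: if $\ker\sigma \subsetneq J$ for some proper closed two-sided ideal $J$, then picking any irreducible $\pi$ whose kernel contains $J$ yields $\ker\sigma \subsetneq \ker\pi$, so $\pi \neq \sigma$ lies in the Fell-closure of $\{\sigma\}$, contradicting $T_1$. Next I would invoke Glimm's theorem: being $T_1$ implies $T_0$, hence $G$ is of Type I, and therefore for every $\pi \in \hat{G}$ the image $\pi(C^*(G))$ contains $K(H_\pi)$ as a non-zero two-sided ideal. Setting $J' := \pi^{-1}(K(H_\pi))$ gives a closed two-sided ideal with $\ker\pi \subsetneq J' \subseteq C^*(G)$; by maximality $J' = C^*(G)$, which means $\pi(C^*(G)) \subseteq K(H_\pi)$. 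Specializing to $f \in L^1(G) \subset C^*(G)$ finishes the proof.

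The main obstacle is the $(\Rightarrow)$ direction, whose essential input is Glimm's deep theorem identifying Type I groups with $T_0$-separation of the unitary dual (and with the property that every irreducible representation contains the compacts in its image). I would treat this as a black box. Once Glimm is in hand, the remainder is the soft dictionary between topological separation of $\hat{G}$ and algebraic maximality of primitive ideals in $C^*(G)$, which powers both directions in a symmetric way.
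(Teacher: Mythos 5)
The paper does not prove this statement; it merely cites Glimm, so there is no internal argument to compare against. Your proposal is correct. You reduce the stated equivalence ($T_1$ separation of $\hat{G}$ if and only if every $\pi(f)$ is compact) to the related form of Glimm's theorem ($T_0$ if and only if GCR), black-boxing the hard direction $T_0 \Rightarrow$ GCR, and handle the remaining translation by standard $C^*$-algebra arguments. In the $(\Leftarrow)$ direction, topological irreducibility together with $\pi(C^*(G)) \subset K(H_\pi)$ forces $\pi(C^*(G)) = K(H_\pi)$, so primitive ideals are maximal and the map $\pi \mapsto \ker\pi$ is injective on $\hat{G}$ (by uniqueness of the irreducible representation of $K(H)$), which is exactly $T_1$. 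In the $(\Rightarrow)$ direction you extract maximality of primitive ideals from $T_1$, then invoke Glimm to place $K(H_\pi)$ inside $\pi(C^*(G))$, and conclude by applying maximality to the closed two-sided ideal $\pi^{-1}(K(H_\pi)) \supsetneq \ker\pi$. This is the standard derivation found, e.g., in Dixmier, and the one deep ingredient you defer to a black box --- $T_0 \Rightarrow$ GCR --- is precisely the ingredient the paper itself attributes to Glimm without proof, so the logical division of labor matches the paper's.
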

In a non-formal phrasing, Type I groups are those in which we can hope to understand their (unitary) representation theory via their unitary dual. More precisely, for a unitary representation $\pi$, one defines its \textit{support} (denoted $\text{supp}(\pi)$) to be the set of irreducible representations that are weakly contained in it. For Type I groups, there is a canonical way to define a measure on the unitary dual that will yield the representation as a direct integral of irreducible ones. The support of a representation is exactly the support of this measure, and the representation is weakly equivalent to its support.

We will now define two important families of representations.
\begin{definition}[Discrete Series, Tempered Representations]
    Let $\pi$ be an irreducible unitary representation of $G$.
    \begin{enumerate}
        \item $\pi$ is said to be in the \textbf{discrete series} if it is a sub-representation of the left regular representation.
        \item $\pi$ is said to be \textbf{tempered} if it is weakly contained in the left regular representation.
    \end{enumerate}
\end{definition}
Note that the definition above is not asymmetric: we could have used the right regular representation instead, for the left and right regular representations are equivalent. The set of irreducible tempered representations, $\text{supp}(\lambda)$, is by definition a closed subset of $\hat{G}$. It is called the \textit{tempered dual}, and denoted by $\hat{G}_\text{temp}$. The notion of temperedness extends to reducible representation as well: a representation $\pi$ is said to be tempered if it is weakly contained in the left regular representation, or equivalently if every representation in its support is tempered.

In many cases, it is useful to identify tempered representations as those representations whose matrix coefficients obey certain integrability conditions. For this, we need the following:
\begin{definition}
       A representation $\pi$ is said to be an $L^p$ representation if there exists a dense subspace $H_0\subset H_\pi$, such that for all $\xi, \eta \in H_0$ the matrix coefficient function $c_{\xi,\eta}$ is in $L^p(G)$.
\end{definition}
\begin{theorem}\cite[Theorems~14.1.1-14.1.2]{Dixmier}\label{DiscChar}
    Assume $G$ is unimodular. An irreducible representation $\pi$ is in the discrete series if and only if it is an $L^2$ representation.
\end{theorem}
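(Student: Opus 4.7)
The plan is to exploit the duality between subrepresentations of the regular representation and vectors with square-integrable matrix coefficients. On one side, embedding $H_\pi$ into $L^2(G)$ expresses matrix coefficients as convolutions on $G$; on the other, an $L^2$ matrix coefficient $c_{\xi,\eta_0}$ yields an intertwiner from $H_\pi$ into $L^2(G)$ whose image is a $\rho$-invariant copy of $H_\pi$, where $\rho$ denotes the right regular representation (which is unitarily equivalent to $\lambda$ under unimodularity, via $f \mapsto \check f$).

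For the forward direction, I realize $H_\pi$ as a closed $\lambda$-invariant subspace $V \subset L^2(G)$, with orthogonal projection $P \colon L^2(G) \to V$ commuting with $\lambda(g)$ for every $g$. For $f \in L^1(G) \cap L^2(G)$ and $\xi \in V$, a change of variables using unimodularity yields
\[c_{Pf,\xi}(g) = \langle \lambda(g) Pf, \xi \rangle = \langle \lambda(g) f, \xi \rangle = \int_G f(y)\overline{\xi(gy)}\,dy.\]
Minkowski's integral inequality together with the identity $\int_G |\xi(gy)|^2\,dg = \|\xi\|_2^2$ (unimodularity) then produces the bound $\|c_{Pf,\xi}\|_2 \leq \|f\|_1 \|\xi\|_2$. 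Setting $H_0 := P(L^1(G) \cap L^2(G))$, continuity of $P$ and density of $L^1 \cap L^2$ in $L^2$ imply that $H_0$ is dense in $V$, and every pair of vectors in $H_0$ has an $L^2$ matrix coefficient.

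For the converse, I fix $\eta_0 \in H_0 \setminus \{0\}$ and define $T \colon H_0 \to L^2(G)$ by $T\xi(g) = c_{\xi,\eta_0}(g)$. A direct computation gives $T \pi(h) = \rho(h) T$. The operator $T$ is closable, since Cauchy--Schwarz forces $T\xi_n \to 0$ pointwise whenever $\xi_n \to 0$ in $H_\pi$, so any $L^2$ limit of $T\xi_n$ must vanish. The closure $\overline T$ still intertwines $\pi$ with $\rho$, whence $\overline T^* \overline T$ is a positive self-adjoint operator on $H_\pi$ commuting with $\pi(G)$. The spectral form of Schur's lemma --- every spectral projection of $\overline T^* \overline T$ commutes with the irreducible $\pi$ and hence equals $0$ or $I$ --- forces $\overline T^* \overline T = cI$ for some $c > 0$, so $\overline T/\sqrt{c}$ is an isometric embedding of $H_\pi$ into $L^2(G)$ intertwining $\pi$ with $\rho$. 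Composing with the unitary $f \mapsto \check f$ embeds $\pi$ in $\lambda$.

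The main obstacle is the spectral form of Schur's lemma applied to the unbounded operator $\overline T^*\overline T$: one must verify that the invariant core supporting the spectral calculus remains dense in $H_\pi$ and rule out the possibility that the spectrum is supported at infinity. Both points are standard consequences of irreducibility together with $T \neq 0$, but they are the only steps requiring real care. The forward direction, by contrast, is essentially a packaging of Young's convolution inequality plus density.
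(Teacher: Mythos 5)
The paper does not give its own proof of this statement; it cites Dixmier, Th\'eor\`emes 14.1.1--14.1.2, which is exactly the classical Godement--Dixmier square-integrability theorem. Your argument reproduces that classical proof essentially verbatim, and it is correct.

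One remark on the point you flag as the ``main obstacle.'' After establishing (via von Neumann's theorem on densely defined closed operators) that $\overline{T}^*\overline{T}$ is self-adjoint on a dense domain, and checking that $\pi(g)$ commutes with its bounded resolvent $(\overline{T}^*\overline{T}+I)^{-1}$ --- which follows once one replaces $H_0$ by the $\pi$-invariant dense subspace spanned by $\pi(G)H_0$, on which $c_{\xi,\eta_0}\in L^2$ still holds by translation invariance --- every spectral projection $E_{[0,n]}$ is $0$ or $I$ by Schur. The possibility that the spectrum ``escapes to infinity'' is ruled out automatically: $E_{[0,n]}\to I$ strongly as $n\to\infty$, so some $E_{[0,n]}$ must equal $I$, which forces $\overline{T}^*\overline{T}$ to be bounded and hence a positive scalar $c$; that $c>0$ follows from $T\eta_0(e)=\|\eta_0\|^2\neq 0$. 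So the concern is genuine in the sense that it must be addressed, but its resolution is routine once the commutation with the resolvent is in place. Everything else --- the Young/Minkowski estimate $\|c_{Pf,\xi}\|_2\leq\|f\|_1\|\xi\|_2$ in the forward direction, the closability via uniform pointwise decay, and the passage from $\rho$ to $\lambda$ via $f\mapsto\check f$ --- is exactly right and uses unimodularity precisely where needed.
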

Another standard name for discrete series representations is \textit{square-integrable representations}, which makes sense because of theorem \ref{DiscChar}. Having theorem \ref{DiscChar} in hand, one can hope to have a similar statement with respect to tempered representations, that is, one can expect tempered representations to have matrix coefficients that are "almost" square integrable. One direction of this statement is indeed true for general locally compact groups. It is a theorem due to Cowling, Haagerup and Howe \cite[Theorem~1]{CowHowHaa}.
\begin{theorem}[Cowling-Haagerup-Howe]
    Let $\pi$ be a representation such that there exists a cyclic vector $\xi \in H_\pi$ whose associated diagonal matrix coefficient $g \rightarrow \langle \pi(g)\xi , \xi \rangle$ is in $L^{2+\epsilon}$ for every $\epsilon > 0$. Then $\pi$ is weakly contained in the left regular representation.
\end{theorem}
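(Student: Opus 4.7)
The strategy combines a tensor--power reduction with a spectral-radius comparison of reduced norms. First, I would establish the following auxiliary lemma: if a unitary representation $\sigma$ of $G$ admits a cyclic vector $\eta$ whose diagonal matrix coefficient lies in $L^2(G)$, then $\sigma$ is unitarily equivalent to a subrepresentation of $\lambda$. The proof is a GNS--type construction: since $c_{\eta,\eta}$ is a continuous positive definite function in $L^2(G)$, there is $u \in L^2(G)$ with $c_{\eta,\eta}(g) = \langle\lambda(g)u,u\rangle$; the map $\sigma(g)\eta \mapsto \lambda(g)u$ is well--defined and isometric on the dense orbit by positive definiteness, and extends by continuity to an isometric intertwiner $H_\sigma \hookrightarrow L^2(G)$.

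Second, I would pass to tensor powers. Since $\varphi$ is bounded (by $\|\xi\|^2$) and lies in $L^{2+\epsilon}(G)$ for every $\epsilon>0$, interpolation gives $\varphi \in L^q(G)$ for every $q>2$; in particular $\varphi^n \in L^2(G)$ for every integer $n \geq 2$. The diagonal matrix coefficient of $\xi^{\otimes n}$ in $\pi^{\otimes n}$ is precisely $\varphi^n$, so the auxiliary lemma applies to the cyclic subrepresentation $\sigma_n \subseteq \pi^{\otimes n}$ generated by $\xi^{\otimes n}$, yielding $\sigma_n \prec \lambda$ and, explicitly, a vector $u_n \in L^2(G)$ with $\varphi^n(g) = \langle\lambda(g)u_n,u_n\rangle$ and $\|u_n\|^2 = \|\xi\|^{2n}$.

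Third, I would bootstrap to $\pi \prec \lambda$. For every self--adjoint $f \in C_c(G)$, a direct computation gives
\[\int_G (f^* * f)(x)\,\varphi^n(x)\,dx \;=\; \|\lambda(f)u_n\|^2 \;\leq\; \|\lambda(f)\|^2\,\|\xi\|^{2n},\]
whereas the quantity $\|\pi(f)\xi\|^2 = \int_G(f^**f)\varphi\,dx$ involves $\varphi$ rather than $\varphi^n$. The main obstacle, and the technical core of the proof, is converting the tensor--power estimate into the reduced--norm bound $\|\pi(f)\| \leq \|\lambda(f)\|$ for all $f \in C_c(G)$. I would proceed by replacing $f$ with a convolution power $f^{*k}$, applying H\"older's inequality that pairs $\varphi \in L^{2+\epsilon}$ against the $L^{(2+\epsilon)/(1+\epsilon)}$--norm of $(f^{*k})^* * f^{*k}$, and then taking the spectral--radius limit $k \to \infty$ jointly with $\epsilon \to 0^+$. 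The crux is that the dual H\"older exponent $(2+\epsilon)/(1+\epsilon)$ tends to $2$ as $\epsilon \to 0$, which in combination with the growth rate of iterated convolutions recovers $\|\lambda(f^* * f)\| = \|\lambda(f)\|^2$ in the limit. This joint limit, and hence the use of $\varphi \in L^{2+\epsilon}$ for \emph{every} $\epsilon > 0$ rather than a single fixed $\epsilon$, is what makes the estimate tight enough to conclude $\pi \prec \lambda$.
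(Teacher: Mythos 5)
The paper quotes the Cowling--Haagerup--Howe theorem and cites it to \cite{CowHowHaa}; it gives no proof of its own. So there is nothing to compare against, and I am assessing your proposal on its own merits. Your overall strategy --- convolution powers, H\"older's inequality with exponent $(2+\epsilon)/(1+\epsilon)$, and a double limit $k\to\infty,\ \epsilon\to 0^+$ --- is indeed the core of the standard argument. The auxiliary lemma (Godement's theorem that an $L^2$ positive-definite function is of the form $\langle\lambda(\cdot)u,u\rangle$ for $u\in L^2(G)$) and the observation that $\varphi^n$ is the diagonal coefficient of $\xi^{\otimes n}$ are both correct. However, there are two points in the bootstrap step that would not survive being written out.

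First, the tensor-power estimate $\int (f^**f)\varphi^n \leq \|\lambda(f)\|^2\|\xi\|^{2n}$ that you derive via the vectors $u_n$ is never actually fed into the H\"older argument: the H\"older pairing is against $\varphi$ itself, not $\varphi^n$, and the two ``power tricks'' (tensor powers of the representation vs.\ convolution powers of $f$) are logically independent. The argument should be restructured so that only the convolution-power mechanism carries the weight; the $\sigma_n\prec\lambda$ observation is a good sanity check but is a red herring for the proof of $\pi\prec\lambda$.

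Second, and more seriously, the phrase ``taking the spectral-radius limit $k\to\infty$'' hides a genuine gap. You implicitly want $\lim_k\|\pi(f)^k\xi\|^{1/k}=\|\pi(f)\|$, but cyclicity of $\xi$ for the \emph{representation} $\pi$ does not imply that $\xi$ is cyclic for the single operator $\pi(f)$, so the liminf could undershoot $\|\pi(f)\|$. The correct device is the power-mean (Jensen) inequality applied to the spectral measure $\mu_\xi$ of $\xi$ under the positive operator $\pi(g)$, where $g=f^**f$: since $\langle\pi(g)^m\xi,\xi\rangle=\int t^m\,d\mu_\xi(t)$ and $\mu_\xi$ has total mass $\|\xi\|^2$, one gets $\int g\varphi \leq \|\xi\|^{2(1-1/m)}\bigl(\int g^{*m}\varphi\bigr)^{1/m}$. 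Combining this with H\"older, with Riesz--Thorin interpolation $\|g^{*m}\|_p\leq\|g^{*m}\|_1^{1-\theta}\|g^{*m}\|_2^{\theta}$ where $\theta=2/(2+\epsilon)$, and with the elementary bounds $\|g^{*m}\|_1\leq\|g\|_1^m$ and $\|g^{*m}\|_2\leq\|\lambda(g)\|^{m-1}\|g\|_2$, letting $m\to\infty$ and then $\epsilon\to 0^+$ yields $\int g\varphi\leq\|\xi\|^2\|\lambda(g)\|$ for all positive $g\in C_c(G)$. This says that the positive functional $\omega_\varphi:f\mapsto\int f\varphi$ is bounded in the reduced $C^*$-norm, which is exactly the criterion $\pi\prec\lambda$ for the cyclic representation generated by $\varphi$; you never need (and cannot directly get) the stronger pointwise bound $\|\pi(f)\|\leq\|\lambda(f)\|$ from the behavior of $\xi$ alone. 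With these two corrections --- drop the $\varphi^n$ detour and replace ``spectral radius'' by the Jensen-plus-interpolation estimate on the positive functional --- your outline becomes a complete proof.
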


$G$ is \textit{Amenable} if and only if all of its irreducible representations are tempered, which is equivalent to the trivial representation being tempered. $G$ has \textit{Kazhdan's Property (T)} if the trivial representation is an isolated point in $\hat{G}$. Property (T) can be phrased in terms of spectral gap: the gap in the spectrum refers here to the isolation of the trivial representation. For groups that don't admit property (T), that is, their full spectrum (unitary dual) don't have a gap, spectral gap is still an interesting property of representations.
\begin{definition}[Spectral Gap]
    A representation $\pi$ is said to have a spectral gap if it doesn't weakly contain the trivial representation.
\end{definition}
We note that many authors define spectral gap in a slightly different way: a representation is said to have a spectral gap whenever the trivial representation is isolated in its support. For us, it is more convenient to eliminate the invariant vectors if exist, and use the definition above.
\subsection{Smooth and Admissible Representations}
In this section, $G$ is assumed to be a locally compact, totally disconnected group (abbreviated tdlc). The basic structural property of tdlc groups is van Dantzig's theorem, which states that a tdlc group admits a basis of neighborhoods of the identity that consists of compact open subgroups.

For tdlc groups, it is convenient to consider a different class of representations, which we recall here. In this subsection, representations are no longer assumed to be unitary unless otherwise stated. For an introduction to smooth and admissible representations, the reader is referred to \cite{tadicClassical}, and also to Bernstein-Zelevinsky \cite{BZ}, where much of the theory was developed. These references are the main source from here until the end of the preliminaries. 
\begin{definition}[Smooth Representations, Admissible Representations]
     Let $\pi$ be a representation of $G$ on a complex vector space $V$.
    \begin{enumerate}
        \item A vector $v\in V$ is called \textbf{smooth} for the representation $\pi$ if its stabilizer in $G$ is open. Evidently, $v$ is smooth if and only if there exists a compact open subgroup $K\subset G$ such that $\pi(k)v=v$ for all $k\in K$.
        \item The set of smooth vectors in $V$ is denoted by $V^\infty$. $\pi$ is called \textbf{smooth} if $V=V^\infty$.
        \item $\pi$ is called \textbf{admissible} if for each compact open subgroup $K$, the set of $K$-fixed vectors, $V^k$, is finite dimensional.
    \end{enumerate}
\end{definition}
Smooth representations are easier to handle, and we do not lose much by considering them, as the following elementary lemma guarantees. 
\begin{lemma}
    Let $\pi$ be a representation on a Hilbert space $V$, such that for each $v\in V$ the map $g \rightarrow \pi(g)v$ is continuous. Then, $V^\infty$ is a dense invariant subspace of $V$.
\end{lemma}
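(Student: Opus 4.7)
The plan is to verify the three claims (invariance, subspace, density) in that order, with the work concentrated in the density statement via a standard averaging argument. Invariance is immediate: if $v\in V^\infty$ is fixed by a compact open subgroup $K\subset G$, then for any $g\in G$ the vector $\pi(g)v$ is fixed by the conjugate subgroup $gKg^{-1}$, which is again compact and open, so $\pi(g)v\in V^\infty$. The subspace property is equally elementary: $V^\infty$ is closed under scalar multiplication, and if $v_1,v_2\in V^\infty$ are stabilized by compact open subgroups $K_1,K_2$ respectively, then $K_1\cap K_2$ is still compact and open (intersection of two open sets is open, intersection of a compact with an open is compact since we can take the intersection inside the compact), and it fixes $v_1+v_2$.

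For density, the key tool is van Dantzig's theorem, which provides a neighborhood basis $\{K_\alpha\}$ of the identity consisting of compact open subgroups. Fix any $v\in V$. For each $\alpha$, normalize the Haar measure on $K_\alpha$ to be a probability measure $\mu_\alpha$, and define the averaged vector
\[
v_\alpha \;=\; \int_{K_\alpha}\pi(k)v\,d\mu_\alpha(k),
\]
which exists as a Bochner (or weak) integral in $V$ because the orbit map $k\mapsto \pi(k)v$ is continuous by hypothesis and $K_\alpha$ is compact. By left-invariance of $\mu_\alpha$, for every $k_0\in K_\alpha$ we have $\pi(k_0)v_\alpha=\int_{K_\alpha}\pi(k_0k)v\,d\mu_\alpha(k)=v_\alpha$, so $v_\alpha$ is fixed by $K_\alpha$ and hence lies in $V^\infty$.

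It remains to check that $v_\alpha\to v$. Using that $\mu_\alpha$ is a probability measure, one estimates
\[
\|v_\alpha-v\| \;=\; \Bigl\|\int_{K_\alpha}\bigl(\pi(k)v-v\bigr)\,d\mu_\alpha(k)\Bigr\| \;\leq\; \sup_{k\in K_\alpha}\|\pi(k)v-v\|,
\]
and the right-hand side tends to $0$ as $K_\alpha$ shrinks to $\{e\}$, by strong continuity of $\pi$ at the identity on the fixed vector $v$. This produces, for every $v\in V$, a net of smooth vectors converging to $v$, proving density. There is no real obstacle here; the only point requiring minor care is the legitimacy of the Bochner integral, which is ensured by continuity of the orbit map together with compactness of $K_\alpha$, and the mild verification that the intersection of a compact open subgroup with another open subgroup is again compact open.
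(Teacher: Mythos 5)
Your proof is correct and uses essentially the same averaging argument as the paper: average $v$ over a small compact open subgroup $K$ to produce a $K$-fixed vector near $v$. The only cosmetic difference is that the paper passes through an arbitrary closed convex neighborhood of $v$ (so the averaged vector lands in it by convexity), whereas you estimate $\|v_\alpha - v\|$ directly; the paper also takes invariance and the subspace property as immediate, which you spell out.
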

\begin{proof}
    The fact that $V^\infty$ is an invariant subspace is immediate (and of course does not require any topology from $V$). Now let $v$ be in $V$, and $U$ be a closed convex neighborhood of $v$. Then, from continuity one has that there exists a compact open subgroup $K$ such that $\pi(K)v\subset U$. Then, $w=\int_K \pi(k)v \,d\mu k$ is a smooth vector invariant by $K$ in $U$.
\end{proof}
We recall that for general locally compact groups, the category of unitary representations of the group is equivalent to the category of non-degenerate *-representations of the Banach *-algebra $L^1(G)$. For tdlc groups, we can restrict ourselves to a very convenient class of test functions in $L^1(G)$. For a compact open subgroup $K$, let $H_K(G)$ be the algebra (with convolution, after fixing a Haar measure on the group) of compactly supported $K$-bi-invariant functions on G. It is an idempotented algebra, with idempotent $e_K$, the indicator of $K$.

We denote by $H(G)$ the algebra of locally constant, compactly supported functions on $G$, which is the union of $H_K(G)$ as $K$ runs over all compact open subgroups of $G$. $H(G)$ is called the \textit{Hecke algebra} of $G$ (note that some authors define the Hecke algebra to be an algebra of distributions on the group. As long as we fix a Haar measure on our group this is equivalent, and we will work with functions). The category of smooth $G$-representations is equivalent to the category of non-degenerate $H(G)$-modules.

A representation ($\pi, V$) is associated with an $H(G)$-module, in the standard way one associates an algebra representation of $L^1(G)$ to a representation of the group. If the representation is smooth, the associated module is non-degenerate. The representation is moreover associated with an $H_K(G)$-module for each $K$, and for $f\in H_K(G)$, $\pi(f)$ is an operator whose range is in the space of $K$-invariant vectors.
\begin{lemma}
    We have the following relations between representations of a tdlc group and of its Hecke algebra:
    \begin{enumerate}
        \item Let $(\pi, V)$ be a smooth representation of $G$. Then, $\pi$ is algebraically irreducible if and only if for every compact open $K$, the representation of $H_K(G)$ on the space $V^K$ of $K$-invariant vectors is algebraically irreducible.
        \item Let $(\pi, V)$ be a continuous unitary representation of $G$ on a Hilbert space. Then $\pi$ is topologically irreducible if and only if for every compact open $K$, the representation of $H_K(G)$ on the space $H^K$ of $K$-invariant vectors is topologically irreducible. 
    \end{enumerate}
\end{lemma}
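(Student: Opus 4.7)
My plan for both parts rests on the identification $H_K(G)=e_K\cdot H(G)\cdot e_K$, where $e_K\in H(G)$ (suitably normalized by $\mu(K)^{-1}$) is the idempotent whose action on any smooth $G$-representation projects onto the $K$-fixed vectors, and which in the unitary setting is the orthogonal projection onto $H^K$. I will also use throughout that the $G$-action and the $H(G)$-action on a smooth representation determine one another via $\pi(g)\pi(f)=\pi(L_g f)$ with $L_g f\in H(G)$, and that for any $K$-fixed vector $w$ one has $\pi(f)w=\pi(f*e_K)w$.

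For the forward direction of part (1), I would assume $(\pi,V)$ is algebraically irreducible as a smooth $G$-representation and let $W\subset V^K$ be a nonzero $H_K(G)$-submodule. Then $\pi(H(G))W$ is a nonzero $G$-invariant subspace of $V$, hence equals $V$ by irreducibility. Applying $\pi(e_K)$ and using that $\pi(e_K)w=w$ on $W$, one obtains
\[
V^K \;=\; \pi(e_K)V \;=\; \pi(e_K)\pi(H(G))W \;=\; \pi(e_K\,H(G)\,e_K)W \;=\; \pi(H_K(G))W \;\subset\; W,
\]
so $W=V^K$. For the converse, I assume each $V^K$ is either zero or irreducible as an $H_K(G)$-module, and let $U\subset V$ be a nonzero $G$-submodule. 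Smoothness yields $U\cap V^K\ne 0$ for some $K$, and irreducibility then forces $U\cap V^K=V^K$. To extend this to every compact open $K'$, I set $K''=K\cap K'$; since $V^K\subset V^{K''}$, one has $U\cap V^{K''}\ne 0$, and irreducibility of $V^{K''}$ gives $V^{K'}\subset V^{K''}\subset U$. Hence $U\supset\bigcup_{K'}V^{K'}=V$.

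Part (2) follows the same skeleton with closures. For the forward direction, given a nonzero closed $H_K(G)$-invariant $W\subset H^K$, topological irreducibility forces $\overline{\pi(H(G))W}=H$; the continuity of the projection $P_K=\pi(e_K)$ then yields
\[
H^K \;=\; P_K(H) \;=\; P_K\bigl(\overline{\pi(H(G))W}\bigr) \;\subset\; \overline{P_K\pi(H(G))W} \;=\; \overline{\pi(H_K(G))W} \;\subset\; W,
\]
where the last inclusion uses that $W$ is closed and $H_K(G)$-invariant. For the converse, given a nonzero closed $G$-invariant $U\subset H$, the preceding lemma applied to $U$ itself supplies a nonzero smooth vector in $U$, so $U\cap H^K\ne 0$ for some $K$; the intersection argument from part (1) then yields $H^{K'}\subset U$ for every $K'$, and density of $H^\infty=\bigcup_{K'}H^{K'}$ in $H$ (the preceding lemma again) together with $U$ being closed forces $U=H$.

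The main obstacle is concentrated in the topological bookkeeping of part (2): one must verify that the bounded operator $P_K$ interacts correctly with closures so that the inclusion chain above is valid, and one must invoke the preceding density lemma twice—first to produce a smooth vector inside $U$, then to conclude that $U$ containing every $H^{K'}$ forces $U=H$. Neither step is deep, but both are essential for the argument to descend from algebraic to topological irreducibility.
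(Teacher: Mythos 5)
Your proof is correct and takes essentially the same approach as the paper's. The paper phrases the arguments via cyclic vectors and approximating sequences (finding $f\in H(G)$ or $f_n\in H(G)$ with $\pi(f)w=v$ or $\pi(f_n)w\to v$, then sandwiching with $e_K$), while you phrase them via submodules and closures (showing $\pi(H(G))W$ or $\overline{\pi(H(G))W}$ is all of $V$ or $H$, then projecting with $P_K$), but the two key ingredients—the identity $e_K\ast f\ast e_K\in H_K(G)$ and the density of $V^\infty=\bigcup_K V^K$ furnished by the preceding lemma—are identical in both.
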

\begin{proof}
    For (1), first assume that $V^K$ is an irreducible $H_K(G)$ module for all $K$. Then, as $V$ is the union of all of these spaces, it is irreducible as an $H(G)$-module (if $v,w \in V$, assume $v \in V^{K_1}$ and $w \in V^{K_2}$; so both are in $V^{K_1 \cap K_2}$, and hence an element of $H_{K_1\cap K_2}(G)$ takes $v$ to $w$). In the other direction, assume $V$ is an irreducible $H(G)$-module. Choose $K$ such that $V^K \ne 0$, and suppose that $W$ is a non-zero invariant $H_K(G)$ sub-module of $V^K$. Then, for $w \in W$ and $v \in V^K$, we have some $f \in H(G)$ such that $\pi(f)w=v$. Then, $v=\pi(e_K)\pi(f)\pi(e_k)w$, and $e_k \ast f \ast e_K \in H_K(G)$.

    Now, for (2), assume first that $\pi$ is topologically irreducible. Let $K$ be a compact open subgroup such that $V^K \ne 0$ and suppose that $W \subset V^K$ is a closed, $H_K(G)$-invariant subspace. Let $w \in W$ and $v \in V^K$. Topological irreducibility together with the fact that $H(G)$ is dense in $L^1(G)$ implies the existence of a sequence of elements $f_n \in H(G)$ such that $\pi(f_n)w \rightarrow v$; and by replacing $f_n$ with $e_K \ast f_n \ast e_K$ we may assume that $f_n \in H_K(G)$. But $\pi(f_n)w \in W$ for every $n$, and since $W$ is closed we get that $v \in W$. In the other direction, assume that for every $K$, the representation of $H_K(G)$ on $V^K$ is topologically irreducible. Suppose that $W$ is a closed invariant subspace of $V$. Let $w \in W^\infty$ (the smooth vectors are dense in the representation restricted to $W$) and $v \in V^\infty$. There exists some $K$ such that $w, v \in V^K$. Because the representation of $H_K(G)$ on $V^K$ is topologically irreducible, we can find a sequence $f_n \in H_K(G)$ such that $\pi(f_n)w \rightarrow v$. Now, the fact that $W$ is closed implies that $v \in W$. So, $V^\infty \subset W$, and since $V^\infty$ is dense in $V$ we get that $V = W$.
\end{proof}
\begin{corollary}
    Let $(\pi, H)$ be a unitary, admissible, topologically irreducible representation of $G$. Then, the underlying smooth representation $(\pi, H^\infty)$ is algebraically irreducible. 
\end{corollary}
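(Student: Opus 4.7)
The plan is to chain together the two parts of the preceding lemma, using admissibility as the bridge that turns topological irreducibility of each $H_K(G)$-module into algebraic irreducibility.

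First, I would note that $(\pi, H^\infty)$ is smooth by construction, and that for every compact open subgroup $K$ we have the equality $(H^\infty)^K = H^K$, since any $K$-fixed vector in $H$ is automatically smooth. So in order to apply part (1) of the lemma to the smooth representation $H^\infty$, it suffices to verify that each $H_K(G)$-module $H^K$ is algebraically irreducible.

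Next, since $(\pi, H)$ is topologically irreducible, part (2) of the lemma tells us that the $H_K(G)$-representation on $H^K$ is topologically irreducible for every compact open $K$. This is the step that directly reuses the previous result and is where essentially all of the analytic content resides. The key observation is now that admissibility enters: by assumption, $H^K$ is \emph{finite-dimensional}. In a finite-dimensional Hausdorff topological vector space every subspace is automatically closed, so topological irreducibility and algebraic irreducibility coincide. Hence each $H^K$ is algebraically irreducible as an $H_K(G)$-module.

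Finally, applying part (1) of the lemma in the opposite direction to the smooth representation $(\pi, H^\infty)$ yields that $H^\infty$ is algebraically irreducible as a $G$-representation (equivalently, as a non-degenerate $H(G)$-module). There is no real obstacle here — the argument is a short two-line combination of the lemma with the finite-dimensionality provided by admissibility; the only thing to be careful about is the identification $(H^\infty)^K = H^K$, which ensures that the hypothesis of part (1) really matches the conclusion of part (2).
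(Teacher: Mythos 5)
Your proof is correct and follows essentially the same route as the paper: use part (2) of the lemma to get topological irreducibility of each $H^K$, invoke admissibility to upgrade this to algebraic irreducibility via finite-dimensionality, and then apply part (1) to conclude. The explicit identification $(H^\infty)^K = H^K$ is a nice clarifying remark that the paper uses only implicitly.
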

\begin{proof}
    Let $K$ be a compact open subgroup such that $H^K \ne 0$. Then, $H^K$ is topologically irreducible by the lemma, but it is moreover algebraically irreducible because it is finite dimensional. As $H^\infty$ is the union of $H^K$ under all $K$, it follows again from the lemma that $H^\infty$ is algebraically irreducible.
\end{proof}
The next result shows that the category of smooth admissible representations can't be avoided if one wants to study the unitary dual of a tdlc group whose representation theory behaves nicely. It says that the unitary dual of CCR tdlc groups live in a bigger space, i.e., in the admissible dual.
\begin{proposition}\label{tdlc_ccr}
    Let $G$ be a tdlc group. Then, $G$ is CCR if and only if $G$ has the property that all of its irreducible unitary representations are admissible.
\end{proposition}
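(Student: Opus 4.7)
The plan is to prove both directions using the characterization of CCR provided by Theorem~\ref{ccrCompact}, namely that $G$ is CCR if and only if $\pi(f)$ is a compact operator for every $f \in L^1(G)$ and every irreducible unitary $\pi$. The central tool linking this with admissibility is the idempotent $e_K \in L^1(G)$ associated with a compact open subgroup $K$: for a unitary representation $(\pi, H_\pi)$, the operator $\pi(e_K)$ coincides with the orthogonal projection onto the subspace $H_\pi^K$ of $K$-invariant vectors, and an orthogonal projection on a Hilbert space is compact if and only if its range is finite-dimensional.

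For the forward direction, I would fix $\pi \in \hat{G}$ and an arbitrary compact open subgroup $K$. Since $G$ is CCR and $e_K \in L^1(G)$, $\pi(e_K)$ is compact by Theorem~\ref{ccrCompact}; being simultaneously a projection, its range $H_\pi^K$ must be finite-dimensional. Varying $K$ shows that $\pi$ is admissible.

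For the converse, I would combine density of the Hecke algebra $H(G)$ in $L^1(G)$ with the fact that compact operators form a norm-closed subspace of the bounded operators on a Hilbert space. Given $f \in L^1(G)$ and an irreducible unitary $\pi$, approximate $f$ in the $L^1$-norm by a sequence $f_n \in H(G)$. Each $f_n$ is locally constant with compact support, and by intersecting compact open subgroups witnessing the left- and right-invariance of $f_n$ separately one finds a single compact open $K_n$ with $f_n \in H_{K_n}(G)$, i.e.\ $e_{K_n} * f_n * e_{K_n} = f_n$. Consequently $\pi(f_n) = \pi(e_{K_n})\pi(f_n)\pi(e_{K_n})$ has range contained in $H_\pi^{K_n}$, which is finite-dimensional by the admissibility hypothesis. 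Hence $\pi(f_n)$ has finite rank, and the standard inequality $\|\pi(f_n) - \pi(f)\| \leq \|f_n - f\|_1 \to 0$ exhibits $\pi(f)$ as a norm limit of compact operators, so $\pi(f)$ itself is compact. Another application of Theorem~\ref{ccrCompact} yields that $G$ is CCR.

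The only substantive technical point is the density of $H(G)$ in $L^1(G)$, which I expect to be routine: it follows from van Dantzig's theorem together with the standard facts that continuous compactly supported functions are dense in $L^1(G)$, and that any such function on a tdlc group can be uniformly approximated by locally constant ones upon choosing a sufficiently small compact open subgroup witnessing uniform continuity. The genuine content of the proof lies in the two structural identifications used above, namely that $\pi(e_K)$ is the orthogonal projection onto $H_\pi^K$ in the forward direction, and that the bi-$K_n$-invariance of $f_n$ forces $\pi(f_n)$ to factor through the finite-dimensional subspace $H_\pi^{K_n}$ in the reverse direction.
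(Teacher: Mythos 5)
Your proof is correct and takes essentially the same approach as the paper: for the forward direction you use that $\pi(e_K)$ is a compact projection onto $H_\pi^K$, and for the converse you approximate $f\in L^1(G)$ by Hecke algebra elements $f_n$, observe that bi-$K_n$-invariance forces $\pi(f_n)$ to have finite rank, and conclude $\pi(f)$ is compact as a norm limit of finite-rank operators. You simply spell out some steps the paper leaves implicit (density of $H(G)$ in $L^1(G)$, and why $\pi(f_n)$ is finite rank).
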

\begin{proof}
    First assume that $G$ is CCR. Let $\pi$ be an irreducible, unitary representation of $G$, and $K$ be a compact open subgroup. Then, for $e_K \in L^1(G)$, $\pi(e_K)$ is on the one hand compact (by theorem \ref{ccrCompact}) and on the other hand a projection. Hence, its range is finite dimensional, and it is exactly the set of $K$-invariant vectors. So $\pi$ is admissible. In the other direction, assume that every irreducible unitary representation of $G$ is admissible, and let $\pi$ be an irreducible unitary representation. Then, for $f$ in $L^1(G)$, $f$ is the limit of functions from $H(G)$. Hence, $\pi(f)$ is the norm limit of operators of finite rank, and is therefore compact.
\end{proof}
\subsection{Unitary dual of Reductive p-adic Groups}
From here until the end of the section, $k$ is a non-archimedean local field of characteristic zero (that is, a finite extension of the field of p-adic numbers), $\textbf{G}$ is a reductive algebraic group defined over $k$, and $G=\textbf{G}(k)$ is its group of $k$-rational points. We always denote by bold letters algebraic $k$-groups, and by regular letters the locally compact group of $k$-rational points of this algebraic group. We will quote two classical results that complete the previous subsections. The first is due to Harish-Chandra, and it tells us that for semisimple groups an inverse direction to Cowling-Haagerup-Howe's theorem also holds.
\begin{theorem}[Harish-Chandra]\label{tempMatCoef}
    Assume $\textbf{G}$ is not only reductive but semisimple. A representation $\pi$ is tempered if and only if it is an $L^{2+\epsilon}$ representation for every $\epsilon>0$. 
\end{theorem}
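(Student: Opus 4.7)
My plan is to handle the two implications separately. For the easy direction --- that an $L^{2+\epsilon}$ representation is tempered --- I would invoke the Cowling--Haagerup--Howe theorem directly: given a dense $H_0 \subset H_\pi$ with matrix coefficients in $L^{2+\epsilon}(G)$ for every $\epsilon > 0$, each nonzero $\xi \in H_0$ is cyclic for $\overline{\pi(G)\xi}$ with diagonal coefficient in $L^{2+\epsilon}(G)$, so that cyclic piece is tempered, and $\pi$, which splits as an orthogonal sum of such cyclic sub-representations, inherits temperedness since weak containment in the regular representation passes through direct sums.

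The substantive direction is that every tempered $\pi$ is an $L^{2+\epsilon}$ representation. First I would reduce to the irreducible case via the direct-integral decomposition: semisimple reductive $p$-adic groups are CCR (by admissibility of irreducibles together with Proposition~\ref{tdlc_ccr}), hence Type~I, so $\pi = \int^{\oplus} \sigma_x \, d\mu(x)$ with the $\sigma_x$ irreducible and $\mu$-almost every $\sigma_x$ in the tempered dual. Granted a fiberwise bound on smooth matrix coefficients of the $\sigma_x$ by a common bi-$K$-invariant majorant lying in $L^{2+\epsilon}(G)$, the subspace $H_0 \subset H_\pi$ spanned by bounded measurable sections supported where the constants are uniform is dense, and its matrix coefficients are in $L^{2+\epsilon}(G)$ by Fubini.

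For an irreducible tempered $\sigma$, the key tool is Harish-Chandra's $\Xi_G$ function --- the spherical matrix coefficient of the unramified principal series $\mathrm{Ind}_{P_0}^{G}(\delta_{P_0}^{1/2})$ --- together with two classical estimates. The first is Harish-Chandra's fundamental inequality: for smooth vectors $u,v \in H_\sigma$,
\[
\abs{c_{u,v}(g)} \leq C(u,v)\, \Xi_G(g).
\]
I would derive this by realizing $\sigma$ as a sub-representation of a unitary parabolic induction from a discrete series of a Levi subgroup $M$ (the Knapp/Silberger classification of the tempered dual as fully induced from discrete series of Levis), applying Theorem~\ref{DiscChar} on $M$, and transferring the estimate to $G$ through the explicit formula for matrix coefficients of induced representations. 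The second estimate is $\Xi_G \in L^{2+\epsilon}(G)$ for every $\epsilon > 0$: using the Cartan decomposition $G = K \overline{A^+} K$ and the standard asymptotic $\Xi_G(a) \asymp \delta_{P_0}(a)^{-1/2}$ (up to polynomial factors in the lengths) on the positive chamber, $\int_G \Xi_G^{2+\epsilon}$ reduces to a sum over the lattice $A^+$ of $\delta_{P_0}(a)^{-\epsilon/2}$ times a polynomial, which converges by the geometric decay of $\delta_{P_0}^{-1}$ along $A^+$. The hardest step will be the fundamental inequality, in particular the bookkeeping when iterating parabolic induction through minimal Levis and tracking the polynomial correction factors; the rest is either formal (the direct-integral reduction) or a routine geometric-series estimate on the dominant cone $A^+$.
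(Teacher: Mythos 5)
The paper does not prove this theorem; it cites it as a classical result of Harish-Chandra and uses it as a black box. So the question is whether your sketch is a sound outline of the classical proof. The easy direction (via Cowling--Haagerup--Howe and decomposition into cyclic pieces) is fine, and the overall shape of the hard direction --- reduce to irreducibles, bound coefficients by $\Xi_G$, and check $\Xi_G \in L^{2+\epsilon}$ by the Cartan decomposition --- is indeed Harish-Chandra's strategy.

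There is, however, a genuine gap at the crux. You propose to obtain the fundamental inequality $\abs{c_{u,v}} \leq C(u,v)\,\Xi_G$ by applying Theorem~\ref{DiscChar} to the discrete-series input $\tau$ on the Levi $M$ and then "transferring the estimate" through parabolic induction. But Theorem~\ref{DiscChar} only says that the coefficients of $\tau$ lie in $L^2(M)$; it does not give the pointwise bound $\abs{c^\tau_{u,v}(m)} \leq C\,\Xi_M(m)$ that you actually need to feed into the induction formula. That pointwise estimate --- Harish-Chandra's weak inequality for discrete series --- is a separate, nontrivial theorem (usually proved via the Casselman--Jacquet module criterion for square-integrability), and it is logically upstream of the fundamental inequality for tempered representations, not a consequence of mere $L^2$-integrability. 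So the real bottleneck is not, as you say, the "bookkeeping when iterating parabolic induction," but establishing the weak inequality for discrete series in the first place; once you have that, the transfer through induction is the routine part.

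A cleaner route, and one that matches what the paper actually does in the analogous Lemma~\ref{IntCond}, is to bypass $\Xi_G$ entirely and work with Casselman's exponent criterion: a smooth admissible representation is tempered iff every exponent $\nu$ of each Jacquet module $\bar r_{MG}(\pi)$ has $\mathrm{Re}(\nu) \leq 0$, and it is an $L^p$-representation iff these real parts are strictly below a threshold depending on $p$ and $\delta_P$. Non-positivity immediately implies the strict inequality for every $\epsilon > 0$, which gives tempered $\Rightarrow L^{2+\epsilon}$ without ever mentioning $\Xi_G$. This also makes the uniformity needed in your direct-integral step transparent, since the exponent bounds depend only on $G$ and $P$. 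I would recommend restructuring the hard direction along these lines, or at least flagging the weak inequality for discrete series as the genuine input rather than Theorem~\ref{DiscChar}.
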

Theorem \ref{tempMatCoef} also holds in the archimedean case. The next crucial result is due to Bernstein.
\begin{theorem}[\cite{Bernstein}]
    Let $\pi$ be an irreducible, unitary representation of $G$. Then $\pi$ is admissible.
\end{theorem}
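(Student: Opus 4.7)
My plan is to reduce admissibility to the structure theorem of Bernstein on the Hecke algebras $H_K(G)$, and then use Schur's lemma together with unitarity to conclude finite-dimensionality.

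Fix a compact open subgroup $K \subset G$; I need to show that $H_\pi^K$ is finite-dimensional. Note that $H_\pi^K$ carries a continuous action of the convolution algebra $H_K(G)$, and that this is a $*$-action for the involution $f \mapsto f^*$, $f^*(g) := \overline{f(g^{-1})}$, because $\pi$ is unitary. By part (2) of the lemma above (relating topological irreducibility on $G$ to topological irreducibility on each $H_K(G)$), the topological irreducibility of $\pi$ transfers to the topological irreducibility of $H_\pi^K$ as an $H_K(G)$-module. A standard Schur-type argument for topologically irreducible $*$-representations of a $*$-algebra on a Hilbert space then yields that the commutant of $H_K(G)$ in $B(H_\pi^K)$ is $\C$; in particular the center $Z(H_K(G))$ acts on $H_\pi^K$ by a single scalar character $\chi$.

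The main input I would invoke is Bernstein's theorem on the structure of the Hecke algebras of reductive $p$-adic groups: for every compact open subgroup $K \subset G$, $H_K(G)$ is finitely generated as a module over its center $Z(H_K(G))$. Granting this, pick $a_1, \ldots, a_N \in H_K(G)$ generating it as a $Z(H_K(G))$-module, and pick any nonzero $v \in H_\pi^K$ (noting that $e_K v = v$, so that $v \in H_K(G) v$). Since the center acts on $v$ by scalars, any element $hv$ with $h = \sum_i z_i a_i$, $z_i \in Z(H_K(G))$, equals $\sum_i \chi(z_i) a_i v$, so the cyclic subspace $H_K(G) v$ is contained in $\mathrm{span}_\C \{a_1 v, \ldots, a_N v\}$, which is finite-dimensional and hence closed. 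By the topological irreducibility of $H_\pi^K$ this cyclic subspace is all of $H_\pi^K$, and therefore $H_\pi^K$ is finite-dimensional, as required.

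The main obstacle is Bernstein's theorem on the finiteness of $H_K(G)$ over its center. This result, together with the Bernstein decomposition of the category of smooth representations into blocks indexed by inertial equivalence classes of supercuspidal data, constitutes the deep content behind the statement; it relies on the exactness and finiteness properties of Jacquet modules, Frobenius reciprocity for parabolic induction, and the geometric lemma governing double cosets of parabolic subgroups. The preceding manipulations reduce the analytic problem of admissibility to this purely algebraic finiteness theorem, for which one cannot afford to assume admissibility \emph{a priori}, since one must work with the Hecke algebra itself rather than with fixed-point spaces of specific representations.
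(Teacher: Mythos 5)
The paper does not supply a proof of this statement; it is cited to \cite{Bernstein}, so there is no in-paper argument to compare against. Your reduction is, however, correct and is the standard modern route: Schur's lemma for topologically irreducible $*$-representations of the $*$-algebra $H_K(G)$ (which applies thanks to part (2) of the preceding lemma and the fact that $\pi(f)$ for $f\in H_K(G)$ is a bounded operator with $\pi(f)^\ast=\pi(f^\ast)$) forces $Z(H_K(G))$ to act by a character, and then finite generation of $H_K(G)$ over its center collapses the cyclic module $H_K(G)v=\operatorname{span}\{a_1v,\dots,a_Nv\}$, which is closed and nonzero and therefore equals $H_\pi^K$. There is no circularity: the finiteness of $H_K(G)$ over its center is established via Jacquet modules, second adjunction, and cuspidal support in the \emph{algebraic} category of smooth representations, using admissibility of irreducible smooth representations (Jacquet/Harish-Chandra), not the unitary admissibility you are trying to prove.

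One caveat worth being explicit about: Bernstein's original argument (the 1974 uniform-admissibility paper) proceeds differently, via a combinatorial bound giving a uniform constant $N(K)$ for $\dim V^K$ over all irreducible smooth $V$, and deduces unitary admissibility from that; the Hecke-algebra-over-center theorem is a later, stronger result (Bernstein--Deligne). Your proof is thus a valid and clean \emph{modern} reproof, but is not a reconstruction of the cited original. Since the paper only cites the result, this distinction does not affect its correctness; it is worth flagging only so you do not inadvertently present a stronger input as if it were the elementary one.
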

Bernstein's result first of all shows that reductive groups over non-archimedean local fields are CCR (see proposition \ref{tdlc_ccr}). But furthermore, it suggests a way to find the unitary dual of such groups. As we can identify $\hat{G}$ with a subset of the smooth, admissible dual (by passing to the smooth part of the representation), the standard strategy is as follows: classify all irreducible, admissible, smooth representations of $G$, and then detect which of them admits an invariant inner-product. The first part of this is done via the Langlands classification, which is our last goal in these preliminaries.

\subsection{Parabolic Induction and the Langlands Classification}
We fix a maximal split torus $\textbf{A}_0$ and a minimal parabolic $\textbf{P}_0=\textbf{M}_0\textbf{N}_0$, where $\textbf{M}_0=\textbf{Z}(\textbf{A}_0)$ is a Levi subgroup. A pair $(\textbf{P}, \textbf{A})$ of a split torus $\textbf{A}$ and a parabolic subgroup $\textbf{P}$ with Levi decomposition $\textbf{P}=\textbf{MN}$, $\textbf{M}=\textbf{Z}(\textbf{A})$ is called standard if $\textbf{P} \supset \textbf{P}_0$ and $\textbf{A} \subset \textbf{A}_0$. We denote by $X^\ast(\textbf{H})_k$ the group of $k$-rational characters of a $k$-algebraic group $\textbf{H}$. If $\textbf{H}$ is reductive, we define:
\[
^0H = \bigcap_{\alpha\in X^\ast(\textbf{H})_k}\text{ker}(\abs{\alpha}_k)
\]
Where $\abs{\cdot}_k$ is the absolute value on the local field $k$. The subgroup $^0H$ is open, normal, and the quotient $H / ^0H$ is free abelian and finitely generated; $^0H$ contains all compact subgroups of $H$ as well as the derived subgroup. A character (that is, a homomorphism of topological groups into $\mathbb{C}^*$) of $H$ is called \textit{unramified} if it factors through $H/^0H$. 

The general philosophy of Harish-Chandra is that admissible representations of $G$ may be understood in terms of representations of Levi factors of parabolic subgroups of $G$. Let $P=MN$, $M=Z(A)$ be a parabolic subgroup, and consider irreducible, admissible representations of $M$. Usually we will think of an irreducible admissible representation of $M$ in the form $\sigma \chi$, where $\sigma$ is (the smooth part of) some irreducible, unitary representation of $M$ and $\chi$ is an unramified character of the torus $A$. It is convenient to identify the unramified characters of a torus as elements in a vector space. To this end, we define the real and complex dual to the Lie algebra of a split torus $\textbf{A}$, whose centralizer is a Levi $\textbf{M}$, in the following way:
\[
\mathfrak{a}^\ast = X^\ast(\textbf{A})_k \otimes \mathbb{R} =X^\ast(\textbf{M})_k \otimes \mathbb{R}
\]
\[
\mathfrak{a}^\ast_\mathbb{C} = X^\ast(\textbf{A})_k \otimes \mathbb{C} =X^\ast(\textbf{M})_k \otimes \mathbb{C}
\]
Now, $\mathfrak{a}$ and $\mathfrak{a}_\mathbb{C}$ are defined as the dual spaces corresponding to these vector spaces. In order to identify elements of $\mathfrak{a}^\ast_\mathbb{C}$ as (unramified) characters of the torus $A$, we define Harish-Chandra's function $H:M\rightarrow \mathfrak{a}$ by:
\[
q^{<\chi,H(x)>}=\abs{\chi(x)}_k
\]
for all $\chi \in X^\ast(\textbf{A})_k$ and $x \in M$, where $q$ is the number of elements in the residue field of $k$. Now, with each $\nu \in \mathfrak{a}^\ast_\mathbb{C}$ we associate a character of $A$, $\chi_\nu$, which is defined as: $\chi_\nu(a)=q^{<H(a),\nu>}$. The association above identifies the abelian group of unramified characters of $A$ with a quotient of the vector space $\mathfrak{a}^\ast_\mathbb{C}$.
\begin{definition}[Parabolic Induction]
    Given a parabolic subgroup $P=MN$ and a smooth, admissible representation of $M$, $\sigma$, we define $i_{GM}(\sigma)$ to be $\text{Ind}_{P}^{G}(\sigma \delta_P^{\frac{1}{2}})$, where Ind is the (smooth) induction functor and $\delta_P$ is the modular function of $P$.  
\end{definition}
The parabolic induction functor $i_{GM}$ takes admissible representations of $M$ to admissible representations of $G$. The normalization factor $\delta_P^{1/2}$ ensures that this functor also takes unitary representations to unitary representations. It has a natural left adjoint.
\begin{definition}[Jacquet Functor]
    Given a parabolic subgroup $P=MN$ and an admissible representation $(\rho, V)$ of $G$, define the subspace $V_N$ = span of $\{u-\rho(n)u \mid n \in N\}$. As $M$ normalizes $N$, it perseveres the subspace $V_N$. We define $r_{MG}(\rho)$ to be the quotient of $\rho$ on the space of co-invariants $V/V_N$ restricted to $M$, tensored with $\delta_P^{-1/2}$.
\end{definition}
\begin{theorem}[Frobenius Reciprocity]\label{Frobenius}
    $r_{MG}$ is left adjoint to $i_{GM}$.
\end{theorem}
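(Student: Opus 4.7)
The plan is to deduce this adjunction from the classical Frobenius reciprocity for smooth induction from a closed subgroup, together with the elementary observation that maps into a representation on which $N$ acts trivially automatically factor through the $N$-coinvariants. In other words, everything reduces to formal bookkeeping once the normalizing twist by $\delta_P^{1/2}$ is tracked correctly. What we must establish is a natural isomorphism
\[
\mathrm{Hom}_M(r_{MG}(\pi),\sigma) \;\cong\; \mathrm{Hom}_G(\pi, i_{GM}(\sigma))
\]
for admissible smooth representations $\pi$ of $G$ and $\sigma$ of $M$.

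First I would recall the classical Frobenius reciprocity in the smooth category: for any closed subgroup $H\subset G$ and smooth representations $\pi$ of $G$ and $\tau$ of $H$, evaluation at the identity $\phi\mapsto \phi(e)$ gives an isomorphism
\[
\mathrm{Hom}_G\bigl(\pi,\mathrm{Ind}_H^G(\tau)\bigr) \;\cong\; \mathrm{Hom}_H(\pi|_H,\tau),
\]
with inverse $\psi\mapsto \bigl(v\mapsto [g\mapsto \psi(\pi(g)v)]\bigr)$; smoothness of the image function is guaranteed because every vector $v\in\pi$ has an open stabilizer. Applied to $H=P$ and $\tau = \sigma\otimes\delta_P^{1/2}$, this yields
\[
\mathrm{Hom}_G(\pi, i_{GM}(\sigma)) \;\cong\; \mathrm{Hom}_P\bigl(\pi|_P,\,\sigma\otimes\delta_P^{1/2}\bigr).
\]

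Next I would exploit the fact that $\delta_P$ is trivial on $N$ (since $\delta_P(mn)=|\det\mathrm{Ad}(m)|_{\mathrm{Lie}(N)}|_k$), so $\sigma\otimes\delta_P^{1/2}$ is an $N$-trivial $P$-representation. Hence any $P$-equivariant map $\phi:\pi\to\sigma\otimes\delta_P^{1/2}$ satisfies $\phi(\pi(n)v-v)=0$ for all $n\in N$ and $v\in\pi$, so it vanishes on the subspace $V_N$ and descends to a well-defined $M$-equivariant map $\bar\phi:\pi_N\to \sigma\otimes\delta_P^{1/2}$. Conversely any such $\bar\phi$ pulls back to a $P$-equivariant map on $\pi$. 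This gives
\[
\mathrm{Hom}_P\bigl(\pi|_P,\,\sigma\otimes\delta_P^{1/2}\bigr) \;\cong\; \mathrm{Hom}_M\bigl(\pi_N,\,\sigma\otimes\delta_P^{1/2}\bigr).
\]
Finally, absorbing the character on the other side, $\mathrm{Hom}_M(\pi_N,\sigma\otimes\delta_P^{1/2}) = \mathrm{Hom}_M(\pi_N\otimes\delta_P^{-1/2},\sigma) = \mathrm{Hom}_M(r_{MG}(\pi),\sigma)$, which composes with the previous two isomorphisms to give the claim. Naturality in both variables is immediate from the construction.

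There is no serious obstacle here; the proof is a sequence of formal identifications. The only delicate point is making sure that the two normalization twists—by $\delta_P^{1/2}$ inside $i_{GM}$ and by $\delta_P^{-1/2}$ inside $r_{MG}$—cancel correctly, which is precisely the reason the normalized definitions were chosen. I would remark that, in contrast, the analogous statement asserting that $r_{\bar M G}$ (the Jacquet functor for the \emph{opposite} parabolic) is a \emph{right} adjoint of $i_{GM}$ is Bernstein's second adjunction theorem and requires substantially more work; it is not what is being asserted here.
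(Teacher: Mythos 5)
The paper states this result without proof, treating it as a standard fact from the representation theory of reductive $p$-adic groups; it is one of the foundational lemmas in the references it cites (Bernstein--Zelevinsky, Casselman). Your argument is correct and is exactly the standard derivation: apply smooth Frobenius reciprocity for $\mathrm{Ind}_P^G$ (via evaluation at the identity), observe that $\delta_P$ restricts trivially to $N$ so that $\sigma\,\delta_P^{1/2}$ is an $N$-trivial $P$-module and hence every $P$-equivariant map from $\pi$ factors through the coinvariants $\pi_N$, and then cancel the two normalizing half-twists. The implicit use of the fact that parabolic induction lands in smooth vectors (so the inverse map $\psi\mapsto\bigl(v\mapsto[g\mapsto\psi(\pi(g)v)]\bigr)$ is well-defined) is correctly justified by the openness of stabilizers. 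Your closing remark distinguishing this easy adjunction from Bernstein's second adjointness, which concerns the opposite parabolic and is genuinely deeper, is apt.
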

\begin{definition}
    An admissible representation $(\pi, V)$ is called \textit{supercuspidal} if for every proper parabolic subgroup $P=MN$, we have $r_{MG}(\pi)=0$.
\end{definition}
Supercuspidal representations are the ones that don't occur as sub-quotients of parabolically induced representations. An important result of Harish-Chandra detects supercuspidal representations via their matrix coefficients. This result shows that supercuspidal representations are always in the discrete series, and in particular are tempered.
\begin{theorem}[Harish-Chandra]
    A representation $\pi$ is supercuspidal if and only if its matrix coefficients are compactly supported when restricted to $^0G$.
\end{theorem}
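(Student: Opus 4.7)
The plan is to use Jacquet's lemma together with an averaging change-of-variables. Jacquet's lemma states that for a smooth admissible representation $(\pi, V)$ and parabolic $P = MN$, one has $V_N = \{v \in V : \pi(e_{N_0}) v = 0 \text{ for some compact open } N_0 \subset N\}$, where $e_{N_0}$ is the normalized indicator of $N_0$. Dually, $v \in V_N$ if and only if $\langle v, \tilde w\rangle = 0$ for every $N$-invariant vector $\tilde w$ in the contragredient $\tilde V$. Thus supercuspidality admits two equivalent formulations: an averaging-vanishing one, and an orthogonality one with respect to all $N$-invariants in $\tilde V$, over every proper $P = MN$. The plan is to connect each formulation to matrix coefficient support: the dual formulation handles the easy direction, and the averaging formulation combined with the Cartan decomposition handles the hard one.

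For $(\Leftarrow)$, assume matrix coefficients are compactly supported on ${}^0G$, and fix a proper parabolic $P = MN$. Since $N$ is unipotent and so has no non-trivial $k$-rational characters, $N \subset {}^0G$. For any $v \in V$ and $\tilde w \in (\tilde V)^N$, the matrix coefficient $c_{v,\tilde w}$ restricted to $N$ is constant, equal to $\langle v, \tilde w\rangle$. Its restriction to ${}^0G$ being compactly supported forces this constant to be zero, since $N$ is non-compact. Ranging over $\tilde w$, duality gives $v \in V_N$; ranging over $v$ and $P$, supercuspidality follows.

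For $(\Rightarrow)$, assume $\pi$ is supercuspidal and fix $v, \tilde v \in V^K$ for some compact open $K$. Then $c(g) = \langle \pi(g) v, \tilde v\rangle$ is $K$-bi-invariant, so its support on ${}^0G$ is determined by its values on the discrete set of Cartan-double-coset representatives in $A \cap {}^0G$. It suffices to show $c(a) = 0$ for $a$ deep in each Weyl chamber. For $a$ strictly dominant with respect to a parabolic $P = MN$, the family $a^{-n}(K \cap N) a^n$ of compact opens in $N$ is increasing and cofinal. By supercuspidality and Jacquet's lemma, there is a compact open $N_0 \subset N$ with $\pi(e_{N_0}) v = 0$ for all $v \in V^K$ simultaneously (admissibility makes $V^K$ finite-dimensional, so a single $N_0$ works). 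Once $n$ is large enough that $a^{-n}(K \cap N) a^n \supset N_0$, the change-of-variables identity
\[
\pi\bigl(e_{a^{-n} (K \cap N) a^n}\bigr) v = \pi(a^{-n}) \pi(e_{K \cap N}) \pi(a^n) v,
\]
together with the monotonicity $\pi(e_{N_0}) v = 0 \Rightarrow \pi(e_{N_0'}) v = 0$ for $N_0' \supset N_0$ (following from $e_{N_0'} * e_{N_0} = e_{N_0'}$), forces $\pi(e_{K \cap N}) \pi(a^n) v = 0$. Pairing with $\tilde v$, which is $K \cap N$-invariant and so unaffected by $\pi(e_{K \cap N})$, yields $c(a^n) = 0$ for $n$ large. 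The same argument, run over every proper parabolic, covers all faces of the Weyl chamber.

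The main obstacle is precisely the uniformity step in the $(\Rightarrow)$ direction: Jacquet's lemma gives an $N_0 = N_0(v)$ for each vector, but to deduce vanishing of matrix coefficients on whole chamber directions one needs a single compact open killing every vector in $V^K$ simultaneously. Here Bernstein's admissibility theorem is essential, since it makes $V^K$ finite-dimensional and lets a finite union of $N_0(v)$'s serve uniformly. A subsidiary issue is that the argument must be carried out for every proper parabolic to cover every wall of the positive chamber, and one must check that the chambers corresponding to these parabolics exhaust the asymptotic directions in $A \cap {}^0G$. Finally, the reduction to ${}^0G$ is essential, because unramified characters of the centre of $G$ can be unbounded on $G$ itself, and compact support of matrix coefficients cannot be expected without quotienting these out.
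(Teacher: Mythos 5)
Your $(\Rightarrow)$ direction is in essence the standard Jacquet--Casselman argument, and the mechanics are sound: the uniform $N_0$ across $V^K$ via admissibility, the change-of-variables identity, the monotonicity $e_{N_0'} \ast e_{N_0} = e_{N_0'}$, and the pairing against a $K\cap N$-fixed $\tilde v$ are all correct. The uniformity-over-chamber-directions point you flag at the end is precisely where the remaining bookkeeping lives, and it can be closed along the lines you indicate using the Cartan decomposition and the finitely many standard parabolics.

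The $(\Leftarrow)$ direction, however, rests on a false duality. You assert that $v \in V_N$ if and only if $\langle v, \tilde w\rangle = 0$ for every $N$-\emph{invariant} $\tilde w$ in the smooth contragredient $\tilde V$. The forward implication is trivial, but the converse fails: $(\tilde V)^N$ can vanish while $V/V_N$ does not, so the induced pairing between $V/V_N$ and $(\tilde V)^N$ is non-degenerate on the right but not on the left. Concretely, let $V$ be the Steinberg representation of $SL_2(k)$, which is self-contragredient. Any $N$-fixed vector in a subquotient of $C^\infty(\mathbb{P}^1(k))$ is, by local constancy, constant on the dense open Bruhat cell and hence lies in the trivial constituent, so $(\tilde V)^N = \mathrm{St}^N = 0$; yet the Jacquet module of Steinberg is one-dimensional and Steinberg is not supercuspidal. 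Your $(\Leftarrow)$ argument establishes only that $(\tilde V)^N = 0$ for every proper $P$, which by this example does not imply supercuspidality. The duality you want is Casselman's pairing: the smooth contragredient of the Jacquet module $V/V_N$ is identified with the Jacquet module $\tilde V / \tilde V_{\bar N}$ of $\tilde V$ taken with respect to the \emph{opposite} unipotent radical $\bar N$ --- co-invariants, not invariants, and for the opposite parabolic --- and this identification does not reduce the claim to a statement about constancy along $N$. An honest $(\Leftarrow)$ therefore needs Casselman's asymptotic expansion of matrix coefficients: if $V/V_N \ne 0$, one produces $v, \tilde v$ whose matrix coefficient has a nonvanishing leading exponential term as $a \to \infty$ in the $P$-dominant direction of $A \cap {}^0G$, which is incompatible with compact support on ${}^0G$.
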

The machinery of parabolic induction yields all representations that are not supercuspidal, and supercuspidal representations are always tempered. One celebrated result about parabolic induction is the Langlands classification theorem, which classifies all irreducible admissible representations of a reductive group in terms of  tempered representations of Levi subgroups of it. In the following definition, $\left( ,\right)$ is the pairing of $\mathfrak{a}^\ast$ with $\mathfrak{a}$ for some split torus $\textbf{A}$, and for a root $\alpha$ its co-root is denoted by $\alpha^\vee$.
\begin{definition}[Langlands Data]
    A \textbf{Langlands data set} is a triplet $(P, \sigma, \chi_\nu$) where: \begin{enumerate}
        \item ($P, A$) is a standard pair, with $P=MN$ and $M=Z(A)$.
        \item $\sigma$ is the smooth part of a (unitary) tempered representation of $M$.
        \item $\chi_\nu$ is an unramified character of $A$, associated to an element $\nu \in \mathfrak{a}^\ast$ satisfying $(\nu , \alpha^\vee) > 0$ for every root  $\alpha$ of $A$ that appears in $N$.
    \end{enumerate}
\end{definition}
\begin{theorem}[Langlands Classification]
    The set of irreducible, admissible representations of $G$ stands in one-to-one correspondence with Langlands data sets, in the following way:
    \begin{enumerate}
        \item Given a Langlands data set $(P, \sigma, \chi_\nu)$, the parabolically induced representation $i_{GM}(\sigma \chi_\nu)$ admits a unique irreducible quotient, denoted $J_{P, \sigma, \chi_\nu}$, that is called the \textit{Langlands quotient} of the data set.
        \item The assumption of Langlands quotient to a data set is one-to-one.
        \item Every irreducible admissible representation of $G$ is a Langlands quotient associated with some (unique, by (2)) data set.
    \end{enumerate}
\end{theorem}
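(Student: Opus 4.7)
The plan is to follow the classical strategy of Langlands (originally for real groups), adapted to the $p$-adic setting by Silberger and Borel--Wallach: one reduces the classification of irreducible admissible representations to the classification of tempered representations of Levi subgroups, using cuspidal support, Jacquet modules, and long intertwining operators.

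For part (1), given a Langlands data set $(P, \sigma, \chi_\nu)$, I would construct the long intertwining operator
\[
A(\sigma, \nu)\colon i_{GM}(\sigma \chi_\nu) \longrightarrow i_{G\bar{M}}(\sigma \chi_\nu),
\]
where $\bar{P} = M\bar{N}$ is the parabolic opposite to $P$, initially defined by integration over $\bar{N}$. The positivity condition $(\nu, \alpha^\vee) > 0$ for every root $\alpha$ of $A$ in $N$ is precisely what guarantees absolute convergence of this integral on smooth vectors, via the standard matrix-coefficient estimates for tempered representations of $M$ (theorem \ref{tempMatCoef}). The Langlands quotient $J_{P, \sigma, \chi_\nu}$ is then defined as the image of $A(\sigma, \nu)$; its irreducibility and uniqueness as a quotient of $i_{GM}(\sigma \chi_\nu)$ follow from a computation of the Jacquet module via the Bernstein--Zelevinsky geometric lemma, using strict dominance of $\nu$ to single out $\sigma \chi_\nu$ as the leading exponent of $r_{MG}(i_{GM}(\sigma \chi_\nu))$.

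For surjectivity (part (3)), I start with an irreducible admissible $\pi$. Among all standard parabolics $P_1 = M_1 N_1$ such that $r_{M_1 G}(\pi) \neq 0$, I pick one with $M_1$ minimal; then any irreducible subquotient $\tau$ of $r_{M_1 G}(\pi)$ is supercuspidal, and by Frobenius reciprocity (theorem \ref{Frobenius}) $\pi$ embeds in $i_{G M_1}(\tau)$. Writing $\tau \cong \sigma_0 \chi_\mu$ with $\sigma_0$ unitary supercuspidal and $\mu \in \mathfrak{a}^\ast_\mathbb{C}$, the Langlands decomposition lemma in the theory of root systems yields a unique standard parabolic $P = MN \supset P_1$ and a splitting $\mu = \mu_{\text{temp}} + \nu$ in which $\nu \in \mathfrak{a}^\ast$ is strictly dominant for the roots in $N$ and $\mu_{\text{temp}}$ is a unitary parameter in the complementary direction. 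Taking $\sigma$ to be the tempered constituent of $i_{MM_1}(\sigma_0 \chi_{\mu_{\text{temp}}})$ that yields $\pi$ as a quotient of $i_{GM}(\sigma \chi_\nu)$ produces the required Langlands data.

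Uniqueness (part (2)) then follows by applying $r_{MG}$ to the Langlands quotient: the geometric lemma expresses $r_{MG}(i_{GM}(\sigma \chi_\nu))$ in terms of twisted Weyl conjugates of $\sigma \chi_\nu$, and strict dominance singles out $(P, \sigma, \chi_\nu)$ as the essentially unique ``maximal exponent'' triple, recovering $P$ and $\nu$ first and then $\sigma$. The main obstacle in the whole scheme is the analytic and combinatorial control of the long intertwining operator $A(\sigma, \nu)$: establishing absolute convergence on a dense subspace, meromorphic continuation with holomorphy at the given parameter, and irreducibility of the image, together with the asymptotic analysis of Casselman needed to identify the Langlands quotient inside the full parabolically induced module. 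This technical core requires genuine analytic representation theory of reductive $p$-adic groups, well beyond the formal setup provided by the Jacquet and parabolic induction functors recalled above.
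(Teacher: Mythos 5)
The paper does not prove this theorem; it is stated as a classical result and the text immediately afterward cites Borel--Wallach \cite[Chapter~XI.2]{BorelWallach} for the $p$-adic case. Your sketch is a correct high-level summary of the strategy used in that reference: realize the Langlands quotient as the image of the long intertwining operator $A(\sigma,\nu)$, control convergence via the strict dominance of $\nu$, locate the leading exponent through the Bernstein--Zelevinsky geometric lemma, and establish surjectivity and uniqueness by analysing cuspidal support and applying Langlands' combinatorial decomposition of the central exponent. So in spirit you are following the same route the paper points to.

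Two places where the outline elides genuine work. First, in part (3) Frobenius reciprocity as stated in the paper ($r_{MG}$ left adjoint to $i_{GM}$) gives $\pi$ as a \emph{subrepresentation} of $i_{GM_1}(\tau)$, whereas the Langlands quotient must be exhibited as a \emph{quotient} of a standard module; the passage from sub to quotient requires passing to contragredients and using $\widetilde{i_{GM}(\rho)} \cong i_{GM}(\widetilde{\rho})$ together with Casselman's pairing $\widetilde{\bar{r}_{MG}(\pi)} \cong r_{MG}(\widetilde{\pi})$ (or, equivalently, working with the opposite parabolic). You silently switch between the two. Second, the assertion that $i_{MM_1}(\sigma_0\chi_{\mu_{\mathrm{temp}}})$ has a tempered constituent producing $\pi$ as a quotient is not a formal consequence of the decomposition $\mu=\mu_{\mathrm{temp}}+\nu$; it is an inductive application of the classification (or of the Langlands--Harish-Chandra theory) to the smaller group $M$, and singling out the correct constituent is part of what needs proving. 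Your closing paragraph does flag that the real content lies in the analytic control of $A(\sigma,\nu)$, but these two representation-theoretic steps are equally essential and would need to be filled in for a complete argument.
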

The Langlands classification in the p-adic case is proved in \cite[Chapter~XI.2]{BorelWallach}. The Langlands classification tells us that (the smooth part of) every non-tempered unitary representation of $G$ is $J_{P,\sigma,\chi}$ for some Langlands data $(P,\sigma,\chi)$. These representations are known as the \textit{complementary series} of representations (sometimes the trivial representation is excluded). We can define more specifically:
\begin{definition}[Complementary Series Associated With $\sigma$]
    Given a standard pair $(P,A)$, with $P=MN$ and $M=Z(A)$, and a tempered representation $\sigma$ of $M$, the set of Langlands quotients $J_{P,\sigma,\chi}$ for $\chi$ such that $(P,\sigma,\chi)$ is a Langlands data set and the Langlands quotient is unitarizable is called \textit{the complementary series associated with $\sigma$}.
\end{definition}
\section{Spectral Gap Absorption}\label{sec3}
Here we formulate the two main theorems and explain how to deduce the first as a corollary from the second. We fix the following notations: $G = \prod \textbf{G}_i(k_i)$, where for each $i$, $k_i$ is a local field of characteristic zero, and $\textbf{G}_i$ is an almost simple, connected and simply connected algebraic $k_i$-group. We furthermore assume that $G_i=\textbf{G}(k_i)$ is not compact, for every $i$.
\begin{theorem}\label{main theorem}
    Let $\pi$ and $\rho$ be two unitary representations of $G$, such that $1 \nprec \pi$. Then $1 \nprec \pi \otimes \rho$. That is, having a spectral gap is preserved under tensor products. 
\end{theorem}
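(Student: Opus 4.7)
The plan is to reduce Theorem \ref{main theorem} to Theorem \ref{filtration} by exhibiting, for every unitary representation $\pi$ of $G$ with $1 \nprec \pi$, a closed ideal $I \subset \hat{G}$ containing $\supp(\pi)$ but not $1_G$. Once such an $I$ is constructed, the conclusion is immediate: by the defining property of a closed ideal, $\supp(\pi \otimes \rho) \subset I$ for every unitary representation $\rho$, and therefore $1_G \notin \supp(\pi \otimes \rho)$, i.e., $1 \nprec \pi \otimes \rho$. In the case of a single factor ($n = 1$), Theorem \ref{filtration} directly supplies $I = \hat{G}_p$ for some finite $p$, using that $\hat{G}_p$ is a closed ideal (as established for Kunze--Stein groups) and that $1_G \notin \hat{G}_p$, since the constant matrix coefficient of the trivial representation does not lie in $L^q(G)$ for any finite $q$ when $G$ is noncompact.

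For the product case $G = \prod_{i=1}^n G_i$, I first invoke that each $G_i$ is Type I (by Bernstein), and hence the Fell topology makes $\hat{G}$ canonically homeomorphic to $\prod_{i=1}^n \widehat{G_i}$, with $1_G$ corresponding to the point $(1_{G_1}, \ldots, 1_{G_n})$. Under this identification, for any closed ideal $J_i \subset \widehat{G_i}$, the set $\widehat{G_1} \times \cdots \times J_i \times \cdots \times \widehat{G_n}$ is a closed ideal of $\hat{G}$: irreducibles of $G$ factor as outer tensor products, and the ideal property in the $i$-th slot is preserved under tensoring. Finite unions of closed ideals are again closed ideals. Because $\supp(\pi)$ is closed and avoids $1_G$ in the product topology, one can find a basic open neighborhood $U_1 \times \cdots \times U_n$ of $1_G$, with each $U_i \subset \widehat{G_i}$ open and containing $1_{G_i}$, such that this neighborhood is disjoint from $\supp(\pi)$. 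For each $i$, $\widehat{G_i} \setminus U_i$ is a closed subset of $\widehat{G_i}$ avoiding $1_{G_i}$, so Theorem \ref{filtration} applied to $G_i$ produces $p_i \in [2, \infty)$ with $\widehat{G_i} \setminus U_i \subset \widehat{G_i}_{p_i}$. Every $(\pi_1, \ldots, \pi_n) \in \supp(\pi)$ falls outside $U_1 \times \cdots \times U_n$, so some coordinate $\pi_i$ lies in $\widehat{G_i} \setminus U_i \subset \widehat{G_i}_{p_i}$. Consequently,
\[
\supp(\pi) \subset I := \bigcup_{i=1}^n \widehat{G_1} \times \cdots \times \widehat{G_i}_{p_i} \times \cdots \times \widehat{G_n},
\]
and $I$ is a closed ideal of $\hat{G}$ avoiding $1_G$, completing the reduction.

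The entire substantive difficulty is absorbed into Theorem \ref{filtration}, the proof of which will occupy Section \ref{sec4}. The reduction above is essentially formal: it uses the Type I property of each factor, the product description of the Fell topology, and the observation that each factor-level closed ideal $\widehat{G_i}_{p_i}$ lifts to a closed ideal of $\hat{G}$ via the slab construction. The only delicate point is the existence of a basic product neighborhood of $1_G$ in $\hat{G}$ disjoint from $\supp(\pi)$, which is guaranteed precisely by the product-topology identification of $\hat{G}$ with $\prod_i \widehat{G_i}$. I expect no further obstacles beyond these essentially topological and structural manipulations, so that the full weight of the theorem rests on the filtration result for each simple factor.
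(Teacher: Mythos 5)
Your proof is correct, and it takes a genuinely different route from the paper's. The $n=1$ case is handled identically: locate $\supp(\pi)$ inside some $\hat G_p$ via Theorem~\ref{filtration}, use the Kunze--Stein property and the result of de Laat--Siebenand to conclude $\hat G_p$ is a closed ideal, and note $1\notin\hat G_p$ by non-compactness. For the product case, however, the paper argues by contradiction in the style of Bekka \cite[Lemma~4]{BekkaInvMean}: from $1\prec\pi\otimes\rho$ one extracts a sequence $\pi_j\otimes\sigma_j\to 1_G$ with $\pi_j\in\supp(\pi)$, $\sigma_j\in\supp(\rho)$, factors it coordinatewise as $(\pi_j^1\otimes\sigma_j^1)\times\cdots\times(\pi_j^n\otimes\sigma_j^n)$, applies the single-factor absorption to each coordinate to deduce $\pi_j^i\to 1_{G_i}$, and concludes $1\prec\pi$, a contradiction. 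Your argument instead constructs an \emph{explicit} closed ideal of $\hat G$ avoiding $1_G$ and containing $\supp(\pi)$, namely the finite union of slabs $\bigcup_i \hat G_1\times\cdots\times(\hat G_i)_{p_i}\times\cdots\times\hat G_n$, and then invokes the closed-ideal property once. This is arguably the more transparent reduction: it shows directly that the filtration phenomenon of Theorem~\ref{filtration} persists for the semisimple group, in the sense that every closed subset of $\hat G$ avoiding $1_G$ sits in a closed ideal avoiding $1_G$ (of slab-union type, if not of the form $\hat G_p$ itself).

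One point worth being more careful about: you appeal to the \emph{topological} identification $\hat G\cong\prod_i\widehat{G_i}$ with the product of Fell topologies, whereas the reference the paper cites (\cite[Theorem~7.17]{Folland}) only supplies the set-theoretic bijection for Type~I factors. The homeomorphism statement is true (for separable Type~I groups it follows from the identification of $\hat G$ with the primitive ideal space of $C^*(G)\cong C^*(G_1)\otimes\cdots\otimes C^*(G_n)$ and the known behavior of $\mathrm{Prim}$ under tensor products of separable Type~I $C^*$-algebras), and it is what guarantees the existence of a basic product neighborhood $U_1\times\cdots\times U_n$ of $1_G$ inside the Fell-open complement of $\supp(\pi)$; but it deserves a citation rather than being folded into ``Type~I.'' Also, the Type~I property of the archimedean factors is due to Harish-Chandra rather than Bernstein, so the parenthetical ``by Bernstein'' should be broadened. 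Neither issue affects the validity of the argument.
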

Before continuing, it is in place to explain what happens for non-simply connected groups. Theorem \ref{main theorem} is false in the non-simply connected case, but its veracity in the simply connected case enables us to fully understand what happens in the other cases. The source of the problem in the non-simply connected case is that $G=\textbf{G}(k)$, for $k$ that is not algebraically closed, may have characters. If $\rho \in \hat{G}$ is a non-trivial character, then although it has a spectral gap, tensoring it with $\rho^{-1}$ yields the trivial representation. We can also get representations with almost invariant vectors if our group doesn't have property (T) - if it is rank one p-adic, or rank one Lie group locally isomorphic to $SO(n,1)$ or $SU(n,1)$ - we simply take a representation $\pi$ with almost invariant vectors, and tensor the non-trivial character $\rho$ with $\pi \otimes \rho^{-1}$. The next corollary allows us to understand what sort of spectral gap obeys an absorption principle for non-simply connected groups, and what sort of spectral gap doesn't obey this principle. For simplicity reasons, we explain only the case of simple groups. Let $\textbf{G}$ be a $k$-isotropic, almost simple algebraic $k$-group that is not necessarily simply connected, $\Tilde{\textbf{G}}$ its simply connected cover, and $f:\Tilde{\textbf{G}} \rightarrow \textbf{G}$ the central $k$-isogeny. For a $k$-algebraic group $\textbf{H}$, we denote by $\textbf{H}(k)^+=H^+$ the group generated by unipotent radicals of parabolic subgroups, which is the same (over a perfect field) as the subgroup generated by unipotent elements \cite[I.1.5.2]{Margulis}. $H^+$ is a normal, closed subgroup. Now, since $\Tilde{\textbf{G}}$ is simply connected, $\Tilde{G}^+=G$ \cite[Theorem~I.2.3.1]{Margulis}. Moreover, the isogeny $f$ takes $\Tilde{G}^+=\Tilde{G}$ onto $G^+$ \cite[Proposition~I.1.5.5]{Margulis}, and $G/G^+$ is a finite abelian group \cite[Theorem~I.2.3.1]{Margulis}. It turns out that a spectral gap obeys the absorption principle if and only if it comes from $G^+$.
\begin{corollary}
    Let $G$ be the group of $k$-points of an almost simple, connected algebraic $k$-group that is not necessarily simply connected. Let $\pi$ be a representation such that $1 \nprec \pi$. Then, $1 \nprec \pi \otimes \rho$ for every representation $\rho$ if and only if $\pi|_{G^+}$ has a spectral gap as well.
\end{corollary}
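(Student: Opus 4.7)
The plan is to prove both implications by reducing spectral gap questions about $G$-representations to spectral gap questions about $G^+$-representations or about $\tilde{G}$-representations. Both directions rely on the structural facts recalled just before the statement: $f(\tilde G) = G^+$, $\ker f$ is central and finite, and $G/G^+$ is a finite abelian group; in particular, isotropy of $\textbf{G}$ passes to $\tilde{\textbf{G}}$, so Theorem \ref{main theorem} is available for $\tilde G$.

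For the backward direction, assume $\pi|_{G^+}$ has a spectral gap. Given any unitary $G$-representation $\rho$, I would pull back $\pi|_{G^+}$ and $\rho|_{G^+}$ along $f$ to obtain $\tilde{G}$-representations $\tilde\pi$ and $\tilde\rho$. Because $\ker f$ is compact, spectral gap is preserved under this pullback, so $\tilde\pi$ has a spectral gap. Applying Theorem \ref{main theorem} to $\tilde G$ yields that $\tilde\pi\otimes\tilde\rho = (\pi\otimes\rho)|_{G^+}\circ f$ has a spectral gap, which descends (again by compactness of $\ker f$) to $(\pi\otimes\rho)|_{G^+}$. Since restriction to a closed subgroup preserves weak containment of the trivial representation, $\pi\otimes\rho$ itself has a spectral gap as a $G$-representation.

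For the forward direction I would argue the contrapositive. Assume $\pi|_{G^+}$ has almost invariant vectors, witnessed by unit vectors $v_n\in H_\pi$ with $\|\pi(u)v_n - v_n\|\to 0$ uniformly on compact subsets of $G^+$. The witnessing $\rho$ will be the finite-dimensional quasi-regular representation $\rho = L^2(G/G^+)$, viewed as a $G$-representation through the quotient map. Fixing coset representatives $g_1,\dots,g_m\in G$ for $G/G^+$ together with the standard orthonormal basis $\{\delta_j\}$ of $L^2(G/G^+)$ indexed by these cosets, I would form
\[
w_n \;=\; \frac{1}{\sqrt{m}}\sum_{j=1}^m \pi(g_j)v_n\otimes\delta_j\;\in\; H_\pi\otimes L^2(G/G^+),
\]
and, using the decomposition $hg_j = g_{\sigma_h(j)}u_j(h)$ with $u_j(h)\in G^+$, compute
\[
\|(\pi\otimes\rho)(h)w_n - w_n\|^2 \;=\; \frac{1}{m}\sum_{k=1}^m \|\pi(u_{\sigma_h^{-1}(k)}(h))v_n - v_n\|^2.
\]
The main point to verify carefully is that the right-hand side tends to $0$ uniformly in $h$ on compact subsets of $G$; this follows from the openness of $G^+$ in $G$, which forces $h\mapsto\sigma_h$ to be locally constant and hence each $u_j(h)$ to stay in a compact subset of $G^+$ as $h$ ranges over a compact subset of $G$, combined with the uniform almost invariance of $v_n$. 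This produces the sought almost invariant vectors for $\pi\otimes\rho$, giving $1\prec\pi\otimes\rho$ as a $G$-representation.
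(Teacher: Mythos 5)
Your proposal is correct, and it mirrors the paper in the backward direction while taking a more hands-on route in the forward direction.

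For the backward (``if'') direction your argument is essentially the paper's: pull back to the simply connected cover $\tilde G$, invoke the spectral gap absorption theorem there, and note that restriction to the open, finite-index normal subgroup $G^+$ detects the gap. You spell out the necessary bookkeeping about pullbacks along a proper surjection with compact kernel, which the paper leaves implicit, but the idea is the same.

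For the forward direction the paper argues abstractly: from $1\prec\pi|_{G^+}$ it induces up to $G$, uses weak-containment-preservation under induction together with the tensor identity $\operatorname{Ind}_{G^+}^G(\pi|_{G^+})\cong\pi\otimes L^2(G/G^+)$, and then observes $1\le L^2(G/G^+)$ since $G/G^+$ is finite. You instead unwind this same isomorphism and exhibit the almost invariant vectors directly: your $w_n=\tfrac{1}{\sqrt m}\sum_j\pi(g_j)v_n\otimes\delta_j$ is exactly the image of the natural almost invariant vectors in the induced representation under the tensor identity. Your explicit norm computation
\[
\|(\pi\otimes\rho)(h)w_n - w_n\|^2 = \frac{1}{m}\sum_{k=1}^m \left\|\pi\bigl(u_{\sigma_h^{-1}(k)}(h)\bigr)v_n - v_n\right\|^2
\]
is correct, and your uniformity argument using the openness of $G^+$ (so that $h\mapsto\sigma_h$ is locally constant and the cocycles $u_j(h)$ stay in compacta of $G^+$) closes the gap. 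What the paper's version buys is brevity and reusability (the same tensor-identity-plus-induction trick reappears in the proof of the Bekka--Valette theorem); what your version buys is transparency about where the almost invariant vectors actually live and why the same $\rho=L^2(G/G^+)$ witnesses the failure. Both identify the same witnessing representation, so the two proofs are morally equivalent.

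One minor caveat: the corollary inherits from the surrounding discussion the assumption that $\textbf{G}$ is $k$-isotropic (so $G$ is non-compact), which you implicitly use when applying Theorem \ref{main theorem} to $\tilde G$. It is worth flagging that dependence explicitly.
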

\begin{proof}
    If $1 \nprec \pi|_{G^+}$, then by pulling back the representation to the simply connected cover, we get the absorption from theorem \ref{main theorem}. Assume now that $1 \prec \pi|_{G^+}$. Then, induction on both sides of the equation yields:
    \[
    \text{Ind}{G^+}^G(1) \prec \text{Ind}_{G^+}^G(\pi)
    \]
    \[
    L^2(G/G^+) \prec \pi \otimes L^2(G/G^+)
    \]
    But, since $G/G^+$ is finite, $L^2(G/G^+)$ contains the trivial representation. So we get:
    \[
    1 \prec \pi \otimes L^2(G/G^+)
    \]
    and hence the spectral gap of $\pi$ is not preserved when tensoring with $L^2(G/G^+)$.
\end{proof}

We will deduce theorem \ref{main theorem} from a certain filtration of the unitary dual of $G$, given by integrability parameters of matrix coefficients. For this we need some definitions. 
\begin{definition}[Closed Ideal]
    A set $S \subset \hat{G}$ is called a \textbf{closed ideal} if for all $\pi \in S$ and $\rho \in \hat{G}$, $\text{supp}(\pi \otimes \rho) \subset S$.
\end{definition}
Our strategy is to find sufficiently many closed ideals in the unitary dual of $G$, such that every closed subset of the dual that doesn't contain the trivial representation will lie in such an ideal. Then we will deduce our theorem, using the following easy observation:
\begin{lemma}\label{supp of tensor}
    Let $H$ be a locally compact group, and $S$ be a closed ideal in the unitary dual of $H$. Then, for every representation $\pi$ such that $\text{supp}(\pi) \subset S$ and every representation $\rho$, $\text{supp}(\pi \otimes \rho) \subset S$.
\end{lemma}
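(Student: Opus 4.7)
The plan is to combine three standard ingredients: every unitary representation of a locally compact group is weakly equivalent to its support, weak containment is preserved under tensoring with a fixed unitary representation, and closed subsets of $\hat{H}$ are closed under Fell limits by definition. First I would recall the classical fact that $\pi \prec \text{supp}(\pi)$: one direction is immediate from the definition of the support, while the reverse follows from the standard correspondence between closed two-sided ideals of $C^*(H)$ and closed subsets of $\hat{H}$, under which $\ker(\pi) = \bigcap_{\sigma \in \text{supp}(\pi)} \ker(\sigma)$ in $C^*(H)$; this is equivalent to the operator-norm inequality $\|\pi(f)\| \leq \sup_{\sigma \in \text{supp}(\pi)} \|\sigma(f)\|$ for every $f \in L^1(H)$.

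Second I would verify that tensoring preserves weak containment: if $\pi \prec T$ for a set of representations $T$, then $\pi \otimes \rho \prec \{\tau \otimes \rho : \tau \in T\}$ for any unitary representation $\rho$. Using the matrix-coefficient formulation \Cref{SecDefWeakCont}, this is a short computation. A matrix coefficient of $\pi \otimes \rho$ on pure tensors has the form $c^{\pi \otimes \rho}_{u \otimes x,\, v \otimes y} = c^\pi_{u,v} \cdot c^\rho_{x,y}$, and general matrix coefficients of $\pi \otimes \rho$ are finite sums of such products. Since $c^\rho_{x,y}$ is continuous, hence bounded on any compact subset $K \subset H$, a uniform approximation on $K$ of $c^\pi_{u,v}$ by finite sums of matrix coefficients of members of $T$ yields, after multiplication by $c^\rho_{x,y}$, a uniform approximation on $K$ of the corresponding matrix coefficient of $\pi \otimes \rho$ by finite sums of matrix coefficients of members of $\{\tau \otimes \rho : \tau \in T\}$.

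Combining these, from $\text{supp}(\pi) \subset S$ I obtain $\pi \otimes \rho \prec \{\sigma \otimes \rho : \sigma \in \text{supp}(\pi)\}$; the first paragraph applied to each $\sigma \otimes \rho$ gives $\sigma \otimes \rho \prec \text{supp}(\sigma \otimes \rho)$; and the closed ideal hypothesis applied to each $\sigma \in \text{supp}(\pi) \subset S$ gives $\text{supp}(\sigma \otimes \rho) \subset S$. Chaining these through the $L^1$-norm inequality
\[
\|(\pi \otimes \rho)(f)\| \;\leq\; \sup_{\sigma \in \text{supp}(\pi)} \|(\sigma \otimes \rho)(f)\| \;\leq\; \sup_{\eta \in S} \|\eta(f)\|
\]
yields $\pi \otimes \rho \prec S$, and since $S$ is Fell-closed this is equivalent to $\text{supp}(\pi \otimes \rho) \subset S$, as required. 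I do not anticipate any genuine obstacle: the content is essentially bookkeeping of how weak containment behaves under tensor products and under the transitivity built into its $L^1$ definition. The only step requiring any verification is the tensor preservation of weak containment, and that in turn rests only on the elementary observation that continuous matrix coefficients are uniformly bounded on compacta.
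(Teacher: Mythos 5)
There is a gap in the final paragraph, at the step where you write that the closed ideal hypothesis ``applied to each $\sigma \in \text{supp}(\pi) \subset S$ gives $\text{supp}(\sigma \otimes \rho) \subset S$.'' The definition of a closed ideal quantifies over $\rho \in \hat{H}$, i.e.\ over \emph{irreducible} $\rho$, but in your argument $\rho$ is an arbitrary (possibly highly reducible) unitary representation. So you cannot invoke the hypothesis directly on $\sigma \otimes \rho$: the assertion $\text{supp}(\sigma \otimes \rho) \subset S$ for irreducible $\sigma \in S$ and general $\rho$ is precisely the content of the lemma (in the special case of irreducible $\pi$), so citing it here is circular. You decomposed $\pi$ into its support before tensoring, but you left $\rho$ intact.

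The fix is short and uses your own second paragraph symmetrically: the statement there (weak containment is preserved under tensoring with a fixed representation) applies in the second slot just as well as the first, so for each $\sigma \in \text{supp}(\pi)$ you also have $\sigma \otimes \rho \prec \{\sigma \otimes \tau : \tau \in \text{supp}(\rho)\}$. Now each $\tau$ is irreducible, and the closed ideal hypothesis legitimately gives $\text{supp}(\sigma \otimes \tau) \subset S$. Chaining through the $L^1$-norm inequality as you did, with the supremum now running over $\sigma \in \text{supp}(\pi)$ and $\tau \in \text{supp}(\rho)$, closes the argument. This is exactly what the paper's proof does: it establishes
\[
\text{supp}(\pi \otimes \rho) = \overline{\bigcup_{\substack{\pi' \in \text{supp}(\pi) \\ \rho' \in \text{supp}(\rho)}} \text{supp}(\pi' \otimes \rho')}
\]
by the same matrix-coefficient approximation you describe (product of two uniform-on-compacta approximations), decomposing \emph{both} factors, and then observes that each $\text{supp}(\pi' \otimes \rho')$ lands in $S$ by the ideal hypothesis, so the closure does too. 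Your overall strategy and the key computation are the same as the paper's; the only issue is this one omitted reduction on the $\rho$ side.
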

\begin{proof}
    To deduce this from the definition of a closed ideal, we note that we can identify the support of $\pi \otimes \rho$ using the supports of $\pi$ and of $\rho$. That is, one has:
    \[\text{supp}(\pi \otimes \rho) = \overline{\bigcup_{\substack{\pi' \in \text{supp}(\pi) \\ \rho' \in \text{supp}(\rho)}} \text{supp}(\pi' \otimes \rho')}\]
    The right-hand side is clearly contained in the left-hand side. To see the other inclusion, we need to use the definition of weak containment via approximations of matrix coefficients (definition \ref{SecDefWeakCont}). We need to show that $\pi \otimes \rho$ is weakly contained in the union that appears on the right. Since each representation is weakly equivalent to its support, it is enough to show weak containment in the set $\{\pi' \otimes \rho' \mid \pi' \prec \pi, \rho' \prec \rho\}$ (instead of passing to the union of the supports). Let $f$ be some matrix coefficient of $\pi \otimes \rho$. Then, it is a sum of the form $\sum h_ig_i$, where $h_i$ are matrix coefficients of $\pi$ and $g_i$ are matrix coefficients of $\rho$. $h_i$ can be approximated, uniformly on compacta, by sums of matrix coefficients of representations from $\text{supp}(\pi).$ Similarly, $g_i$ can be approximated in that way by matrix coefficients of representations from $\text{supp}(\rho)$. Then the products will converge to $h_ig_i$, and hence $h_ig_i$ can be approximated by matrix coefficients of representations of the form $\pi' \otimes \rho'$, where $\pi' \in \text{supp}(\pi)$ and $\rho' \in \text{supp}(\rho)$. This shows that the left-hand side is weakly contained in the right-hand side, and hence contained in its closure.
\end{proof}
\begin{definition}\label{p_ideals}
    $\hat{G}_p$ is the set of all irreducible representations that are $L^{p+\epsilon}$ representations for all $\epsilon>0$.
\end{definition}
As matrix coefficient functions are bounded on $G$, we have that $\hat{G}_p \subseteq \hat{G}_q$ for $p \le q$. We now formulate our second main theorem. Whereas the spectral gap absorption theorem \ref{main theorem} holds for semisimple groups, the next theorem only applies to almost simple groups. Namely, for the following theorem $G=\textbf{G}(k)$ is the group of $k$-points of a an almost simple, simply connected algebraic $k$-group, for some field  $k$ of characteristic zero, and that $G$ is not compact.  
\begin{theorem}\label{filtration}
   Let $S \subset \hat{G}$ be a closed subset such that $1 \nprec S$. Then, $\exists p < \infty$ such that $S \subset \hat{G}_p$. 
\end{theorem}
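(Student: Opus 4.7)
The plan is to prove the contrapositive. Suppose $S \subset \hat{G}$ is closed, that $1 \nprec S$, but that $S$ is not contained in $\hat{G}_p$ for any $p < \infty$. Pick $\pi_n \in S \setminus \hat{G}_n$; the goal is to extract a subsequence converging in Fell topology to $1_G$, which would contradict $1 \nprec S$. Harish-Chandra's Theorem \ref{tempMatCoef} places $\hat{G}_{\mathrm{temp}} \subset \hat{G}_2$, so each $\pi_n$ is non-tempered for large $n$. Combining Bernstein's admissibility theorem with the Langlands classification, write each $\pi_n$ as a Langlands quotient $J_{P_n, \sigma_n, \chi_{\nu_n}}$. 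Since the set of standard parabolics is finite, since (up to an unramified twist absorbed into $\chi_{\nu_n}$) the tempered $\sigma_n$ of the Levi of $P_n$ vary in a countable union of compact-torus families of Bernstein components, and since $\nu_n$ sits in a fixed positive chamber inside $\mathfrak{a}^\ast$, one can pass to a subsequence with $P_n = P = MN$ fixed, $\sigma_n \to \sigma$ a fixed tempered representation, and $\nu_n \to \nu_\ast$ in the closure of the chamber.

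The core technical step is a quantitative bound on the smallest $p$ for which $J_{P, \sigma, \chi_\nu} \in \hat{G}_p$ in terms of $\nu$. Realizing the Langlands quotient inside $i_{GM}(\sigma \chi_\nu)$ via the standard intertwining operator, one majorizes each of its matrix coefficients by the product of a matrix coefficient of $\sigma$ (which is $L^{2+\epsilon}$ by Theorem \ref{tempMatCoef}) with the character twist $\chi_\nu \delta_P^{1/2}$ coming from the normalized induction; integrating against the Cartan decomposition of $G$ then produces a continuous function $p(\sigma, \nu)$ controlling the integrability exponent that blows up precisely on the locus of $\nu$ for which $i_{GM}(\sigma \chi_\nu)$ weakly contains the trivial representation of $G$. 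The hypothesis $\pi_n \notin \hat{G}_n$ therefore forces $\nu_\ast$ to sit on this locus. Since parabolic induction is Fell-continuous in its inducing datum and $\pi_n \prec i_{GM}(\sigma \chi_{\nu_n})$, any Fell-limit $\pi_\infty$ of $(\pi_n)$ lies in $\mathrm{supp}\,i_{GM}(\sigma \chi_{\nu_\ast})$; the divergence of $p(\sigma, \nu_n)$ forbids $\pi_\infty$ from being any tempered subquotient, forcing $\pi_\infty = 1_G$ and yielding the contradiction $1 \prec S$.

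The principal obstacle is the quantitative estimate above when $G$ is a p-adic group of $k$-rank one that is not quasi-split, which is precisely the gap left in Bader--Sauer \cite{Bader-Sauer} and the content of the appendix's Tits classification. In the quasi-split case one exploits an open Borel orbit and reduces matrix-coefficient bounds to Harish-Chandra's spherical $\Xi$-function; in the non-quasi-split rank-one situation the minimal Levi is non-abelian and no Borel is available, so I expect to replace the spherical argument by Casselman's asymptotic expansion of matrix coefficients in terms of Jacquet-module exponents, together with a uniform $K$-type bound on the standard intertwining operator near the reducibility point. Identifying exactly which boundary parameters $\nu_\ast$ produce $1_G$ in $\mathrm{supp}\,i_{GM}(\sigma \chi_{\nu_\ast})$, and showing that $p(\sigma, \nu)$ remains bounded away from every other boundary wall, is the place where the case-by-case analysis driven by Tits' list enters the argument.
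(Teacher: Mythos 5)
Your overall skeleton matches the paper's (nontempered reps are Langlands quotients, Casselman's Jacquet-module exponents give the integrability parameter, the hard case is rank-one non-quasi-split p-adic), and your instinct to use Casselman's asymptotic expansion in place of a spherical-function argument is exactly what the paper does in Lemma~\ref{IntCond}. But there is a genuine gap at the heart of your contrapositive argument, and two subsidiary imprecisions worth flagging.

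The main gap is the compactness/finiteness step. You pick $\pi_n\in S\setminus\hat G_n$ and want to pass to a subsequence with $P_n=P$ fixed, $\sigma_n\to\sigma$, $\nu_n\to\nu_*$. Fixing $P$ is fine (finitely many standard parabolics), and in rank one $\nu_n=s_n\rho$ with $s_n\in(0,1]$ is bounded. But the tempered representations $\sigma_n$ of $M$, modulo the unramified twist you absorb into $\chi$, live (in the rank-one case) in $\widehat{{}^0M}$, which is a \emph{discrete, infinite} set. You cannot extract a convergent subsequence $\sigma_n\to\sigma$ unless the $\sigma_n$ eventually stabilize, and nothing in your argument forces that. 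What the paper actually uses to get around this is Theorem~\ref{Abe-Herzig}: for a fixed rank-one $G$, the set of possible \emph{endpoints} $s_0(\sigma)$ of complementary series is \emph{finite}. This is a deep external input — Abe and Herzig obtain it by transferring $\sigma$ to the quasi-split inner form via the Jacquet--Langlands correspondence for $\mathrm{GL}_r(D)$ and applying a global argument — and it is not something one can replace with a pointwise estimate on $p(\sigma,\nu)$ or a ``case-by-case analysis driven by Tits' list'' in the way you sketch. Once you have finiteness, the argument collapses to a one-liner: $\pi_n\notin\hat G_n$ forces $s_n>1-2/n\to 1$, hence $s_0(\sigma_n)\to 1$, hence (by finiteness) eventually $s_0(\sigma_n)=1$, hence eventually $\sigma_n=1$ and $\pi_n$ lies on the spherical complementary series converging to $1_G$.

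Second, your claim that the exponent function $p(\sigma,\nu)$ ``blows up precisely on the locus of $\nu$ for which $i_{GM}(\sigma\chi_\nu)$ weakly contains the trivial representation'' is not correct. The exponent $\frac{2}{1-s}$ (Lemma~\ref{IntCond}) blows up at $s=1$ for \emph{every} $\sigma$; what distinguishes $\sigma=1$ is that only its complementary series actually \emph{reaches} $s=1$ among unitarizable parameters. That is the content of Lemma~\ref{UnifBound}, which you do not state, and which is not a triviality — it uses the Borel--Wallach classification of uniformly bounded representations together with the hypothesis that $G$ is simply connected (so $G=G^+$ and the induced representation at $s=1$ cannot have a noncompact kernel unless $\sigma=1$). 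Dropping simple connectedness genuinely breaks the theorem (a complementary series can run to $s=1$ and converge to a nontrivial character of $G/G^+$), so this hypothesis has to enter somewhere; your sketch never invokes it.

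Finally, a minor point: majorizing matrix coefficients of the Langlands quotient by those of the full induced representation times a character is only heuristic, since the quotient is a subquotient, not a subrepresentation. The paper makes this rigorous by computing $\bar r_{MG}(\pi_{\sigma,s})$ via the geometric lemma, isolating the governing exponent $\delta_P^{s/2}$, and then invoking Silberger's integrability criterion. You do mention Casselman's expansion, so you are pointing at the right tool; just be aware that the bound is obtained through the Jacquet module, not through a direct pointwise domination.
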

The next section of the paper is devoted to the proof of theorem \ref{filtration}. To deduce theorem \ref{main theorem}, we need a result from \cite{deLaat-Siebenand}, following an earlier work of Samei and Wiersma \cite{Samei-Wiersma}. A locally compact group $H$ is called \textit{Kunze-Stein} if it satisfies the equation $L^p(H) \ast L^2(H) \subset L^2(H)$, for $1 \leq p < 2$.
\begin{theorem}\cite[Theorem~1.1]{deLaat-Siebenand}\label{deLaat-Siebenand}
    Let $H$ be a Kunze-Stein group. Then, $\hat{H}_p$ is a closed ideal in $\hat{H}$.
\end{theorem}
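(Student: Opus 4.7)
The plan is to exhibit $\hat{H}_p$ as the spectrum of a canonical $C^*$-algebra quotient of $C^*(H)$; both the closedness and the ideal property will then follow from general $C^*$-algebraic principles combined with Lemma \ref{supp of tensor}. Concretely, first I would define, for $f$ in a suitable dense subalgebra of $L^1(H)$ (for example $C_c(H)$), the $C^*$-seminorm $\|f\|_p := \sup\{\|\pi(f)\| : \pi \in \hat{H}_p\}$, which is dominated by the full $C^*(H)$-norm. Completing yields a $C^*$-algebra $C^*_p(H)$ that is a quotient of $C^*(H)$, and into which every $\pi \in \hat{H}_p$ extends continuously by construction.

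The central step is the reverse identification: every irreducible representation that factors through $C^*_p(H)$ actually lies in $\hat{H}_p$. This is where the Kunze--Stein property is used essentially. For $p \geq 2$ the H\"older conjugate satisfies $p' \leq 2$, so for small $\epsilon > 0$ the convolution inequality $L^{p'+\epsilon}(H) \ast L^2(H) \subset L^2(H)$ holds. Following Samei--Wiersma, this allows one to realize $C^*_p(H)$ concretely as the enveloping $C^*$-algebra of the convolution Banach algebra $L^1(H) \cap L^{p'+\epsilon}(H)$, with the consequence that an irreducible representation $\pi$ extends to $C^*_p(H)$ if and only if its matrix coefficients pair continuously with $L^{p'+\epsilon}$-functions on a dense subspace of $H_\pi$. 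By H\"older duality this is equivalent to the existence of a dense subspace with matrix coefficients in $L^{p+\epsilon}(H)$ for every $\epsilon > 0$, i.e.\ precisely the defining condition of $\hat{H}_p$.

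Once the identification $\hat{H}_p = \widehat{C^*_p(H)}$ is in place, closedness in the Fell topology is immediate, since the spectrum of any $C^*$-algebra quotient of $C^*(H)$ is closed in $\hat{H}$. For the ideal property, let $\pi \in \hat{H}_p$ and $\rho \in \hat{H}$. Fix a dense subspace $H_0 \subset H_\pi$ with matrix coefficients in $L^{p+\epsilon}$ for every $\epsilon > 0$. The algebraic tensor product $H_0 \otimes H_\rho$ is dense in $H_\pi \otimes H_\rho$, and a matrix coefficient of $\pi \otimes \rho$ at elementary tensors factors as
$$c^{\pi \otimes \rho}_{u_1 \otimes v_1,\, u_2 \otimes v_2}(g) = c^{\pi}_{u_1,u_2}(g)\, c^{\rho}_{v_1,v_2}(g),$$
a bounded function times an $L^{p+\epsilon}$ function, hence again $L^{p+\epsilon}$. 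Thus $\pi \otimes \rho$ is itself an $L^{p+\epsilon}$-representation for every $\epsilon > 0$ and so extends to $C^*_p(H)$; any irreducible representation weakly contained in $\pi \otimes \rho$ --- in particular every element of $\operatorname{Supp}(\pi \otimes \rho)$ --- must also extend through that quotient, and by the identification lies in $\hat{H}_p$.

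The main obstacle is the reverse identification in the second paragraph: the $C^*_p$-norm is defined by operator norms, while membership in $\hat{H}_p$ is a pointwise integrability condition on matrix coefficients against a dense subspace. Bridging these two perspectives is exactly what requires the Kunze--Stein convolution inequality, and without it the analogous statement should be expected to fail, since there is otherwise no mechanism to recover $L^{p+\epsilon}$-control of matrix coefficients from the purely operator-theoretic definition of the norm.
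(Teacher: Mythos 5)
Note that the paper does not actually give a proof of this result; it is quoted verbatim from de Laat--Siebenand, so the ``paper's own proof'' is just the citation. Your high-level outline --- identify $\hat{H}_p$ with the spectrum of an exotic $L^{p+}$-$C^*$-algebra in the style of Samei--Wiersma, use the Kunze--Stein phenomenon to establish the reverse inclusion $\widehat{C^*_p(H)} \subset \hat{H}_p$, and then read off closedness (automatic for the spectrum of a quotient) and the ideal property (from the factorization of matrix coefficients of a tensor product) --- does track the structure of the argument in the cited reference, and the ideal-property step, granting the identification, is sound.

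There are, however, two real problems in the central step. First, your H\"older duality has the wrong sign: ``matrix coefficients lie in $L^{p+\epsilon}(H)$ for every $\epsilon>0$'' is dual to ``pairs continuously with $L^q(H)$ for every $q<p'$,'' i.e.\ with $L^{p'-\delta}$ (equivalently $L^{(p+\epsilon)'}$), not with $L^{p'+\epsilon}$. With the exponent as you wrote it, your equivalence would force matrix coefficients into $L^q$ for some fixed $q<p$, which is strictly stronger than membership in $\hat{H}_p$ and is false in general: it would exclude exactly the boundary representations whose matrix coefficients are almost-$L^p$ but not $L^p$. (The corrected exponent $p'-\delta<2$ also covers $p=2$, which your version excludes from the Kunze--Stein hypothesis.) Second, and more seriously, the sentence ``an irreducible representation $\pi$ extends to $C^*_p(H)$ if and only if its matrix coefficients pair continuously with $L^{p'-\delta}$-functions on a dense subspace'' is the entire content of the theorem, and it is asserted rather than argued, with ``Following Samei--Wiersma'' carrying all the weight. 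The hard direction --- passing from weak containment in $\hat{H}_p$, i.e.\ from positive-definite functions that are locally-uniform limits of positive-definite $L^{p+\epsilon}$-functions, to the conclusion that they are themselves $L^{p+\epsilon}$ --- is precisely where Kunze--Stein must supply a genuine analytic estimate; simply observing that $L^1\cap L^{p'-\delta}$ is a convolution algebra acting on $L^2$ does not give it. You correctly flag this as the main obstacle in your last paragraph, which is the right diagnosis, but as written the argument does not cross it, so this remains a sketch of the strategy rather than a proof.
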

The fact that if $\pi$ is an $L^{q}$-representation for some $q$, then $\pi \otimes \rho$ is also an $L^q$-representation is immediate. Hence, the novelty in theorem \ref{deLaat-Siebenand} is the fact that the sets $\hat{H}_p$ are closed. We are now ready to deduce theorem \ref{main theorem} from theorem \ref{filtration}.
\begin{proof}[proof of theorem \ref{main theorem}]
    First, we assume that $G$ is simple. Let $\pi$ and $\rho$ be unitary representations of $G$ such that $1 \nprec \pi$. By theorem \ref{filtration}, there exists a $p$ such that $\text{supp}(\pi) \subset \hat{G}_p$. 
    Simple algebraic groups over local fields are Kunze-Stein: this is a result of Cowling in the archimedean case \cite{Cowling} and of Vecca in the non-arhcimedean case \cite{Veca}.
    So, by theorem \ref{deLaat-Siebenand}, $\hat{G}_p$ is a closed ideal. Using lemma \ref{supp of tensor}, we get that $\text{supp}(\pi \otimes \rho) \subset \hat{G}_p$. But $G$ is not compact, so $1 \notin \hat{G}_p$, and we get that $1 \nprec \pi \otimes \rho$.

    We now prove the theorem in the general case, i.e. when $G=\prod_{i=1}^n G_i$ is a semisimple group, and each $G_i$ is some non-compact simple algebraic group over a local field of characteristic zero, for which we have already proved the theorem. The arguments are the same as the ones that Bekka gives in \cite[Lemma~4]{BekkaInvMean}. Let $\pi$ be some unitary representation of $G$ that has a spectral gap, and $\sigma$ any other unitary representation. Assume towards contradiction that $1 \prec \pi \otimes \sigma$. This means that there exists some sequence of irreducible representations $\rho_j$ in the support of $\pi \otimes \sigma$ that converges to the trivial representation in Fell topology. Applying lemma \ref{supp of tensor}, we may replace the irreducible representations $\rho_i$ with the sequence of (not irreducible) representations $\pi_j \otimes \sigma_j$, where $\pi_j$ is some irreducible representation in $\text{supp}(\pi)$ and $\sigma_j\in \text{supp}(\sigma)$. We recall that every irreducible representation of $G$ is of the form $\tau_1 \times ... \times \tau_n$, where $\times$ denotes the outer tensor product and $\tau_i$ is an irreducible representation of $G_i$ (this is the case since the $G_i$'s are Type I, see \cite[Theorem~7.17]{Folland}). Hence, $\pi_j=\pi_j^1 \times ... \times \pi_j^n$ and $\sigma_j = \sigma_j^1 \times ... \times \sigma_j^n$ for each $j$. We also have that $\pi_j \otimes \sigma_j = (\pi_j^1 \otimes \sigma_j^1) \times ... \times (\pi_j^n \otimes \sigma_j^n)$ for each $j$. Since the sequence $\pi_j^i \otimes \sigma_j^i$ converges to the trivial representation of $G_i$, for each $i$, we get that the sequence $\pi_j^i$ already converges to the trivial representation. Indeed, assume that $1_{G_i} \nprec \bigoplus \pi_j^i$. Then, by the spectral gap absorption principle for the simple group $G_i$, we also have $1_{G_i} \nprec \left( \bigoplus \pi_j^i \right) \otimes \left( \bigoplus \sigma_j^i\right)$. But $\bigoplus\pi_j^i \otimes \sigma_j^i \prec \left( \bigoplus \pi_j^i \right) \otimes \left( \bigoplus \sigma_j^i\right)$, and we saw that $1 \prec \bigoplus\pi_j^i \otimes \sigma_j^i$. Then, for each $i$, $\pi_j^i \rightarrow 1_{G_i}$, and hence $\pi_j^1 \times ... \times \pi_j^n \rightarrow 1_G$, and $1 \prec \pi$. This yields a contradiction. 
\end{proof}
\begin{remark}
    Theorem \ref{filtration} is already known in many cases. If $G$ is of higher rank, the theorem is proved in \cite{Hee}, after it was already known in the archimedean case by \cite{Cowling2}. For the rank one case, if $k$ is archimedean, this is \cite[Theorem~2.5.2]{Cowling2}. So, henceforth we focus on the non-archimedean, rank one case, and prove the the result there. For the rank one p-adic case, the result was proved by Bader and Sauer for quasi-split groups \cite[Theorem~4.5]{Bader-Sauer}. Our method of proof is similar to that of Bader and Sauer, and treats the non-quasi-split case as well. As the proof is uniform, we will not distinguish the (already known) quasi-split case.
\end{remark}
\begin{remark}
     The rank one p-adic groups were classified by Tits, and the classification is given in Appendix \ref{appen}. Though it may seem that we don't use the classification in the proof, we actually rely heavily on a result of Abe and Herzig \cite{Abe-Herzig} which uses the classification. Our proof is therefore too not independent of the classification.
\end{remark}

\section{Proof of theorem \ref{filtration}}\label{sec4}

In this section we prove theorem \ref{filtration}. First, we gather some more or less classical facts, mostly due to Harish-Chandra \cite[Chapter~5]{Silberger}, which help us control the complementary series associated with a representation of $M$, where $M$ is a Levi factor of a corank-1 parabolic. After that, we restrict our attention to the rank one case and prove the theorem.
\subsection{Complementary Series Coming From Corank-1 Parabolics}
In this subsection, $k$ is a non-archimedean local field of characteristic zero and $G=\textbf{G}(k)$ is the group of $k$-points of a semisimple algebraic $k$-group. To find the unitary dual of $G$, we need to answer the following question: given a parabolic $P=MN$ and an irreducible, admissible, unitary representation $\sigma$ of $M$, when is the Langlands quotient $J_{P,\sigma,\chi}$ unitary? The theory of intertwining operators suggests that a question that should be asked prior to that is the question of reducibility of the induced representation $i_{GM}(\sigma \chi)$. It is a fundamental fact that $i_{GM}(\sigma\chi)$ is irreducible for almost every unramified character (in the algebraic variety of unramified characters), and the points of reducibility are of great importance. To this aim, as the results in the corank-1 case are clearer and sufficient for us, we stick to the corank-1 case. That is: $A \subset A_0$ is a rank one split torus, $M=Z(A)$ its centralizer and $P=MN \supset P_0$ is the associated standard parabolic subgroup (we fix in advance some set of simple roots). Then, $P$ is called a \textit{corank-1 parabolic}. We take a supercuspidal, unitary representation $\sigma$ of $M$ and for each $\nu \in \mathfrak{a}^*_c$ we look at the parabolically induced representation $i_{GM}(\sigma \chi_\nu)$. The theory here works better with supercuspidal representations, and for us (in the rank one case) it is not an obstruction since $M$ is compact and all of its irreducible representations are supercuspidal. As $A$ is rank one, $\nu$ can be identified with a complex number: we fix a single non-zero element in $\mathfrak{a}^*_\mathbb{C}$ and consider it as a basis for the vector space. We will choose the element $\rho \in \mathfrak{a}^\ast$, which is half the sum of the roots that appear in $N$. The character $\chi_{\rho}$ is the character we use for the normalization of the induction, $\delta_P^{1/2}$, and every unramified character is of the form $\chi_{s\rho}=\delta_P^{(1/2)s}$ for some $s \in \mathbb{C}$. The complementary series associated with $\sigma$ is given by the unitarizable Langlands quotients $J_{P,\sigma,\chi_{s\rho}}$, for $s$ positive real. The Weyl group of the torus $A$ acts on representations of $M$, and we set $W_G(\sigma)=\{w \in W(G,A) \mid w(\sigma) = \sigma\}$. $W_G(\sigma)$ is either trivial or of the form $\{1,w_0\}$. The representation $i_{GM}(\sigma\chi_{s\rho})$ is hermitian (and hence may be unitarizable), for $s$ real, only if $W_G(\sigma)$ is non-trivial. $\sigma$ with a trivial stabilizer in the Weyl group contributes no complementary series, and hence we focus, from now on, on $\sigma$ such that $w_0(\sigma)=\sigma$. We define now the Plancherel measure $\mu_{G,\sigma}$, or Harish-Chandra's $\mu$-function, as in \cite[Section~3.1]{Abe-Herzig} (compare to the discussion in  \cite[Section~2]{Shahidi}). It follows from Harish-Chandra's work \cite[Chapter~5]{Silberger} that this function, as a function on $\mathbb{C}$ (in the notations of \cite{Abe-Herzig}, it is the function $s \rightarrow \mu^G(\sigma\chi_{s\rho})$), is meromorphic and its zeros and poles give precise information about the reducibility points of $i_{GM}(\sigma\chi_{s\rho})$. It is phrased concisely in \cite{Abe-Herzig}, so we use them as a reference. The following theorem is precisely \cite[Proposition~3.5]{Abe-Herzig} adjusted to our notations.
\begin{theorem}[Plancherel Measure and Reducibility of Parabolic Induction]\label{PlanMeas}
    There is a unique $s_0 \in [0,1]$ such that $i_{GM}(\sigma\chi_{s\rho})$ is reducible if and only if $s \in \{s_0,-s_0\}$. If $s_0=0$, $\mu_{G,\sigma}$ is holomorphic and non-vanishing on $\mathbb{R}$. If $s_0 > 0$, the function $\mu_{G,\sigma}$ on $\mathbb{R}$ has a double zero at $s=0$, simple poles at $s=\pm s_0$ and is holomorphic and non-vanishing otherwise. 
\end{theorem}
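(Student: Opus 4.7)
The plan is to derive the statement from Harish-Chandra's theory of the Plancherel measure attached to a self-conjugate supercuspidal inducing datum on a corank-one Levi, as developed in \cite[Chapter~5]{Silberger}. I would begin by recalling the standard (long) intertwining operator $A(w_0, \sigma, s\rho): i_{GM}(\sigma\chi_{s\rho}) \to i_{GM}(\sigma\chi_{-s\rho})$, defined as a convergent integral for $\mathrm{Re}(s) > 0$ and meromorphically continued in $s$. Because $w_0(\sigma) \cong \sigma$, the Maass--Selberg relations give $A(w_0, \sigma, -s\rho) \circ A(w_0, \sigma, s\rho) = c(s)\, \mu_{G,\sigma}(s)^{-1} \cdot \mathrm{id}$ on each $K$-isotype, for some nowhere-vanishing meromorphic scalar factor $c$. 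Consequently the real zeros and poles of $\mu_{G,\sigma}$ record exactly the real points at which the normalized intertwiner fails to be an isomorphism, and by Harish-Chandra's reducibility criterion these coincide with the reducibility locus of $i_{GM}(\sigma\chi_{s\rho})$ on $\mathbb{R}$.

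Next I would exploit the functional equation $\mu_{G,\sigma}(-s) = \mu_{G,\sigma}(s)$, which reflects the order-two symmetry imposed by $w_0$, forcing zeros and poles to come in pairs $\pm s$ with even multiplicity at $s = 0$. Combined with the explicit rational shape of $\mu_{G,\sigma}$ as a product, over the roots of $A$ appearing in $N$, of factors of the form $(1 - q^{\pm \lambda s})^{\pm 1}$, one sees that on the real line $\mu_{G,\sigma}$ is a rational expression whose zero/pole divisor is supported at only finitely many symmetric locations. Unitarity of $i_{GM}(\sigma\chi_{s\rho})$ at $s \in i\mathbb{R}$, together with a standard deformation argument tracking when the invariant Hermitian form on the induced space degenerates as $s$ moves out along the positive real axis, then forces at most one positive real $s_0$ to produce reducibility.

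The delicate final step, where I would rely most heavily on \cite{Abe-Herzig}, is to pin down the precise local orders of $\mu_{G,\sigma}$: double zero at $s = 0$, simple poles at $\pm s_0$ when $s_0 > 0$, and holomorphic non-vanishing elsewhere. In the rank-one setting the relative root system is either $A_1$ or (possibly non-reduced) $BC_1$, so the argument reduces to a short case analysis. \textbf{The main obstacle} is the $BC_1$ case, where the doubled root contributes an extra pair of factors to $\mu_{G,\sigma}$, and one must verify that they cannot create a second, independent reducibility point on the positive real axis and that the numerator and denominator factors coming from the two root lengths assemble into exactly the claimed divisor. This is essentially the content of \cite[Proposition~3.5]{Abe-Herzig}, and its verification ultimately depends on the Tits classification of rank-one $p$-adic groups recorded in the paper's appendix.
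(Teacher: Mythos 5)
The paper gives no proof of this theorem: it is stated verbatim as a citation --- ``precisely \cite[Proposition~3.5]{Abe-Herzig} adjusted to our notations'' --- so there is nothing in the text for your argument to be compared against except the decision to defer to that reference, which your sketch ultimately does as well. That said, since you chose to outline the mechanism, there are two points worth flagging.

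First, the dictionary between the analytic behaviour of $\mu_{G,\sigma}$ and reducibility is stated backwards at $s=0$. You write that the real zeros and poles of $\mu_{G,\sigma}$ ``coincide with the reducibility locus,'' but the theorem itself says the opposite at the origin: when $s_0>0$, the function $\mu_{G,\sigma}$ has a \emph{double zero} at $s=0$ while $i_{GM}(\sigma)$ is \emph{irreducible} there; conversely when $s_0=0$, $\mu_{G,\sigma}$ is \emph{non-vanishing} at $s=0$ and $i_{GM}(\sigma)$ is \emph{reducible}. The correct Harish-Chandra/Silberger criterion (for $w_0(\sigma)\cong\sigma$) is that non-vanishing of $\mu_{G,\sigma}$ at $s=0$ signals reducibility of the tempered induced representation, while for real $s\neq 0$ reducibility is signalled by a \emph{pole} of $\mu_{G,\sigma}$. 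Your Maass--Selberg identity $A(w_0,\sigma,-s\rho)\circ A(w_0,\sigma,s\rho)=c(s)\,\mu_{G,\sigma}(s)^{-1}\cdot\mathrm{id}$ is the right starting point, but the asymmetry between $s=0$ and $s\neq 0$ is precisely what makes the statement of the theorem the way it is, and your sketch erases it.

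Second, attributing the verification of Proposition~3.5 of \cite{Abe-Herzig} to the Tits classification conflates two different inputs from that paper. The paper explicitly separates them: Theorem~\ref{PlanMeas} is quoted from \cite[Proposition~3.5]{Abe-Herzig}, which is a structural statement for a fixed $\sigma$ flowing from Harish-Chandra's theory (\cite[Chapter~5]{Silberger}), whereas the result that genuinely uses the Tits classification (and the Jacquet--Langlands machinery for inner forms) is the uniform finiteness of possible ends of complementary series, Theorem~\ref{Abe-Herzig}, i.e.\ \cite[Remark~3.61]{Abe-Herzig}. Your discussion of the doubled root in the $BC_1$ case is a legitimate concern, but it bears on the latter (finiteness and the numerics of $s_0$ over all $\sigma$), not on the shape of the divisor of $\mu_{G,\sigma}$ for a single $\sigma$.
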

The reason we are interested in the reducibility points of the parabolic induction is that they determine the complementary series associated with $\sigma$. That is, the reducibility point is precisely the end of the complementary series. This is a standard fact. The proof of the next theorem is the same as the arguments in \cite[Theorem~8.1]{Shahidi} - the only difference is the location of the poles which is specific to the extra assumptions made there. 
\begin{proposition}\label{redPointUnit}
The reducibility point $s_0$ in theorem \ref{PlanMeas} is the end of the complementary series associated with $\sigma$. That is, if $s_0=0$, $\sigma$ contributes no complementary series; if $s_0 > 0$, then the Langlands quotients $J_{P,\sigma,\chi_{s\rho}}$ for $s > 0$ are unitarizable if and only if $s \leq s_0$
\end{proposition}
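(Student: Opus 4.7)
The plan is to follow the classical Knapp--Stein/Shahidi strategy: construct a Hermitian form on the induced representation via the long intertwining operator, and track its signature as $s$ varies along the positive real axis.

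First, I would set up the form. Since $w_0(\sigma) \cong \sigma$, there is a meromorphic family of normalized long intertwining operators $A(s) \colon i_{GM}(\sigma \chi_{s\rho}) \to i_{GM}(\sigma \chi_{-s\rho})$. Pairing $A(s)f$ against $f$ via the $L^2$-inner product on $K$-sections (for $K$ a maximal compact subgroup) together with the unitary structure on $\sigma$ yields, for real $s$, a $G$-invariant Hermitian form $H_s$ on $i_{GM}(\sigma \chi_{s\rho})$. A standard argument identifies the radical of $H_s$ with the maximal proper subrepresentation, so $H_s$ descends to a non-degenerate Hermitian form on $J_{P,\sigma,\chi_{s\rho}}$, and the Langlands quotient is unitarizable precisely when this descended form is definite.

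Next I would establish positivity on the closed interval $(0, s_0]$. By Theorem \ref{PlanMeas}, $i_{GM}(\sigma \chi_{s\rho})$ is irreducible for $0 < s < s_0$, so $H_s$ is non-degenerate throughout. At $s=0$ one normalizes so that $A(0) = \mathrm{Id}$, making $H_0$ the standard unitary form; hence $H_s$ is positive definite in a right neighborhood of $0$. Because $A(s)$ depends holomorphically on $s$ and restricts to a finite-rank operator on each fixed $K$-type, the eigenvalues of $H_s$ on that $K$-type vary continuously in $s$. Non-degeneracy on $(0, s_0)$ prevents any eigenvalue from crossing zero, so positivity propagates to the whole interval, and by continuity extends to $s = s_0$, where $H_s$ descends to a positive definite form on $J_{P,\sigma,\chi_{s_0\rho}}$. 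Thus the Langlands quotient is unitarizable for $0 < s \leq s_0$.

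The main obstacle is to rule out unitarizability for $s > s_0$. Here I would use Harish-Chandra's relation $A(-s) \circ A(s) = c \cdot \mu_{G,\sigma}(s)^{-1} \cdot \mathrm{Id}$ for some nonzero constant $c$. By Theorem \ref{PlanMeas}, $\mu_{G,\sigma}$ has a simple pole at $s_0$, so $\mu_{G,\sigma}(s)^{-1}$ has a simple zero there and changes sign as $s$ crosses $s_0$. Consequently $A(s_0)$ has a nontrivial kernel, equal to the maximal proper subrepresentation of the reducible induced representation at $s_0$; on a $K$-type meeting this subrepresentation, one eigenvalue of $H_s$ vanishes at $s_0$ and becomes negative just beyond $s_0$. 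For such $s$ the induced representation is irreducible and coincides with its Langlands quotient, yet $H_s$ is now indefinite, precluding unitarizability in a right neighborhood of $s_0$; a continuity argument, combined with the absence of further zeros or poles of $\mu_{G,\sigma}$ on $s > s_0$, extends the indefiniteness to all $s > s_0$. When $s_0 = 0$, the same mechanism applies: the double zero of $\mu_{G,\sigma}$ at $0$ forces $H_s$ to acquire negative eigenvalues immediately upon leaving $s = 0$, so $\sigma$ contributes no complementary series.
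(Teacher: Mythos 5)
Your overall strategy --- constructing the invariant Hermitian form from the long intertwining operator, propagating positivity from a neighbourhood of $s=0$ to the first zero or pole of the Plancherel density, and using the sign change across that pole to kill unitarizability beyond it --- is exactly the Knapp--Stein/Shahidi argument that the paper invokes by citing \cite[Theorem~8.1]{Shahidi}, so for the case $s_0 > 0$ your route and the paper's are essentially the same. The minor imprecision about what precisely $A(-s)\circ A(s)$ equals (for the \emph{normalized} operator this is the identity; the $\mu^{-1}$ relation is for the unnormalized one, and one must be careful to locate the zero of $A(s)$ at $s_0$ rather than argue from the normalized operator, which is invertible) is a standard wrinkle and does not affect the logic.

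There is, however, a genuine error in your treatment of the $s_0 = 0$ case. You write that ``the double zero of $\mu_{G,\sigma}$ at $0$ forces $H_s$ to acquire negative eigenvalues immediately upon leaving $s=0$.'' But Theorem~\ref{PlanMeas} states that $\mu_{G,\sigma}$ has a double zero at $s=0$ only in the regime $s_0 > 0$; when $s_0 = 0$, $\mu_{G,\sigma}$ is holomorphic and \emph{non-vanishing} on $\R$, so the sign-change mechanism you rely on is simply absent. Moreover your normalization ``$A(0)=\mathrm{Id}$'' is only available when $i_{GM}(\sigma)$ is irreducible at $s=0$, i.e.\ when $s_0 > 0$; if $s_0=0$ the representation at $s=0$ is reducible and $A(0)$ is a non-scalar self-adjoint unitary involution which separates the tempered constituents (this is the Knapp--Stein $R$-group phenomenon). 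That is the correct mechanism in this case: $A(0)$ has both eigenvalues $\pm 1$, so $H_0$ is already indefinite relative to the standard $L^2$-form, and since $\mu_{G,\sigma}$ has no further zeros or poles on $(0,1]$, the form $H_s$ stays non-degenerate and hence indefinite for all $s>0$. Without this correction, your argument does not establish the $s_0=0$ half of the statement.
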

\begin{remark}\label{endIsUnitary}
    We note that the Langlands quotient at the end of the complementary series, i.e. at the point of reducibility, is always unitarizable. This fact will be crucial to us later. This follows directly from the theory of intertwining operators, which yields a bilinear form that is positive until the point of reducibility, where it becomes non-negative and positive on the Langlands quotient. In fact, a more general result is true, though much less trivial: All irreducible subquotients at the end of a complementary series are unitarizable. This is originally due to Mili\v ci\'c \cite{Milicic}, as explained in \cite[Section~3.c]{Tadic} 
\end{remark}
\subsection{Back to the Rank One Case}
From now until the end of the section, $k$ is a non-archimedean local field of characteristic zero and $\textbf{G}$ is a $k$-algebraic, $k$-rank one, almost simple, connected and simply connected group. $G=\textbf{G}(k)$. We fix a rank one split torus $A$ and a representative for the unique conjugacy class of proper parabolic subgroups, $P=MN$ where $M=Z(A)$. We proceed to prove theorem \ref{filtration}. The first piece of information that we need is that the integrability parameter of the matrix coefficients of a representation can be read from its location on the complementary series it belongs to. This is part of the theory of asymptotic behavior of matrix coefficients \cite[4.4]{Casselman}. This asymptotic behavior for a parabolically induced representation is determined by the central characters of the representation it is induced from. Specifically in our case, we have:
\begin{lemma}\label{IntCond}
    Let $\sigma \in \hat{M}$, and consider the Langlands quotient $\pi_{\sigma,s}=J_{P,\sigma,\chi_{s\rho}}$, for some $s \in (0,1)$. Assume that $\pi_{\sigma,s}$ is in the complementary series. Then, $\pi_{\sigma,s} \in \hat{G}_p$ if and only if $p \geq \frac{2}{1-s}$
\end{lemma}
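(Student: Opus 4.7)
The plan is to reduce the question to the asymptotic behavior of $K$-finite matrix coefficients of $\pi_{\sigma,s}$ on the positive chamber $A^+$ of the split torus, and then integrate using the rank-one Cartan decomposition $G = K A^+ K$. The expected outcome is that a suitable $K$-finite matrix coefficient $c$ of $\pi_{\sigma,s}$ satisfies $|c(a)| \asymp \delta_P^{(s-1)/2}(a)$ as $a$ escapes to infinity in $A^+$; once this is established, the Jacobian factor $\delta_P(a)$ coming from the integration formula transforms the $L^p$-integral into a geometric series whose convergence is controlled exactly by the condition $p > 2/(1-s)$.

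I would begin by quoting Casselman's asymptotic expansion from \cite[4.4]{Casselman}: for any irreducible admissible representation $\pi$ and $K$-finite vectors $u,v$, for $a \in A^+$ sufficiently deep in the chamber one has
\[
\langle \pi(a) u, v \rangle = \delta_P^{-1/2}(a) \sum_\chi c_\chi(u,v)\, \chi(a),
\]
where $\chi$ ranges over the central characters of $A$ appearing in $r_{MG}(\pi)$. For $\pi = \pi_{\sigma, s}$ I would then identify these exponents. When $s \in (0, s_0)$, Theorem~\ref{PlanMeas} guarantees irreducibility of $i_{GM}(\sigma \chi_{s\rho})$, so $\pi_{\sigma,s}$ coincides with the induced representation, and by the geometric lemma together with $w_0(\sigma) = \sigma$ the exponents are precisely $\chi_{s\rho}$ and $\chi_{-s\rho}$. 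When $s = s_0$, $\pi_{\sigma,s}$ is a proper quotient and its Jacquet module retains only the dominant Langlands exponent $\chi_{s_0\rho}$ (the other exponent $\chi_{-s_0\rho}$ being absorbed by the square-integrable sub-representation of $i_{GM}(\sigma\chi_{s_0\rho})$). Since $\chi_{s\rho}(a) = \delta_P^{s/2}(a)$ dominates $\chi_{-s\rho}(a) = \delta_P^{-s/2}(a)$ on $A^+$ for $s > 0$, the leading asymptotic in either case is
\[
|\langle \pi_{\sigma,s}(a) u, v \rangle| \asymp \delta_P^{(s-1)/2}(a),
\]
for $u, v$ chosen so that the corresponding leading coefficient $c_{\chi_{s\rho}}(u,v)$ is non-zero.

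I would then invoke the Cartan decomposition in the rank-one $p$-adic setting: $G = K A^+ K$ with the integration formula $dg \asymp \delta_P(a)\, dk_1\, d^\times a\, dk_2$, and with $A^+/(A \cap K)$ a free cyclic semigroup on which $\delta_P$ takes values $q^{2n}$ for $n \geq 0$. Substituting the asymptotic above gives
\[
\int_G |c(g)|^p\, dg \asymp \sum_{n \geq 0} q^{n(p(s-1) + 2)},
\]
which is finite precisely when $p(s-1) + 2 < 0$, i.e.\ $p > 2/(1-s)$. Since the smooth vectors $H^\infty$ form a dense subspace on which this analysis applies, letting $\varepsilon \downarrow 0$ in the definition of $\hat{G}_p$ yields the equivalence $\pi_{\sigma,s} \in \hat{G}_p \iff p \geq 2/(1-s)$.

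The main obstacle I expect is the exponent analysis at the reducibility point $s = s_0$: there $\pi_{\sigma,s}$ is a proper Langlands quotient of the induced representation, so pinning down $r_{MG}(\pi_{\sigma,s_0})$ (and in particular ruling out the exponent $\chi_{-s_0\rho}$) is not a direct consequence of the geometric lemma and must be extracted from the Langlands description of the quotient and from the fact that the square-integrable sub-representation sits at the opposite exponent. A secondary but less serious issue is producing the matching lower bound $|c(a)| \gtrsim \delta_P^{(s-1)/2}(a)$ along a cofinal sequence, which is needed for the ``only if'' direction and reduces to the non-vanishing of $c_{\chi_{s\rho}}(u,v)$ for an appropriate pair of $K$-finite vectors.
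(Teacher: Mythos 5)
Your proposal is correct and follows essentially the same route as the paper: both arguments identify the dominant exponent of the Jacquet module of $\pi_{\sigma,s}$ (via the geometric lemma in the irreducible case, and via the Langlands data and the square-integrability of the complementary summand in the reducible case), and then translate that exponent into an $L^p$-integrability threshold — the paper citing \cite[Corollary~2.6]{SilIntOfMatCoef} where you carry out the Cartan integration by hand. One small caution: Casselman's expansion on $A^+$ is governed by the exponents of the Jacquet module with respect to the \emph{opposite} parabolic $\overline{r}_{MG}(\pi)$ (which is how the paper sets it up); you wrote $r_{MG}(\pi)$, whose exponent at the reducibility point is $\chi_{-s_0\rho}$ rather than $\chi_{s_0\rho}$ — in the irreducible case the two Jacquet modules share exponents so this makes no difference, and your final asymptotics and threshold are the right ones, but the notation should be corrected.
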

\begin{proof}
    Let $f=c_{u,v}$ be some matrix coefficient of the irreducible representation $\pi_{\sigma,s}$. Then, when $G$ acts on the spaces of matrix coefficients of admissible representations of $G$ by translations, $f$ spans a sub-representation equivalent to $\pi_{\sigma,s}$. We denote by $\mathscr{X}_f(P,A)$ the set of exponents of this representation with respect to $(P,A)$, as in \cite{SilIntOfMatCoef}. A character $\chi \in X(A)$ is in $\mathscr{X}_f(P,A)$ if and only if it is the central character of a component of $\overline{r}_{MG}(\pi_{\sigma,s})$, where $\overline{r}_{MG}$ is the normalized Jacquet functor with respect to the opposite parabolic subgroup. For a smooth representation $\rho$, we denote by $\tilde{\rho}$ the smooth contragredient representation of $\rho$. We have Casselman's pairing: $\widetilde{\bar{r}_{MG}(\rho)} = r_{MG}(\widetilde{\rho})$ \cite[4.2.5]{Casselman}. Furthermore, we have $i_{GM}(\widetilde{\rho})=\widetilde{i_{GM}(\rho)}$ \cite[2.3]{BZ}. Hence, using Frobenius reciprocity we get:
    $
    \text{Hom}_G(i_{GM}(\sigma\delta_P^{(1/2)s}),\pi_{\sigma,s})=\text{Hom}_G(\widetilde{\pi_{\sigma,s}},\widetilde{i_{GM}(\sigma\delta_P^{(1/2)s}}))=\text{Hom}_G(\widetilde{\pi_{\sigma,s}},{i_{GM}(\widetilde{\sigma\delta_P^{(1/2)s}}}))=
    \text{Hom}_M(r_{MG}(\widetilde{\pi_{\sigma,s}}),\widetilde{\sigma \delta_P^{(1/2)s})}=\text{Hom}_M(\widetilde{\bar{r}_{MG}(\pi_{\sigma,s})},\widetilde{\sigma \delta_P^{(1/2)s}})=
    \text{Hom}_M(\sigma\delta_P^{(1/2)s},\bar{r}_{MG}(\pi_{\sigma,s}))
    $.
    So, $\sigma \delta_P^{(1/2)s}$ is always a component of $\bar{r}_{MG}(\pi_{\sigma,s})$. The geometric lemma \cite[Theorem~5.2]{BZ} tells us that $\overline{r}_{MG}(i_{GM}(\sigma \delta_P^{(1/2)s}))=\sigma \delta_P^{(1/2)s} \oplus \sigma \delta_P^{-(1/2)s}$. Hence, if $i_{GM}(\sigma \delta_P^{(1/2)s})$ is irreducible, $\bar{r}_{MG}(\pi_{\sigma,s})$ is of length two, with the other component being $\sigma \delta_P^{-(1/2)s}$. If it is reducible, then each summand has a non-zero Jacquet module, the induction is of length 2 as well, and we get that $\bar{r}_{MG}(\pi_{\sigma,s})=\sigma \delta_P^{(1/2)s}$. In both cases, the relevant central exponent exponent is $\delta_P^{(1/2)s}$. Now, the explicit computation is precisely \cite[Corollary~2.6]{SilIntOfMatCoef} and we get that $p \geq \frac{2}{1-s}$
\end{proof} 
Lemma \ref{IntCond} tells us that for a fixed representation $\sigma \in \hat{M}$, as we go along its complementary series, the Langlands quotients leave all ideals of the form $\hat{G}_p$ when the parameter $s$ approaches one. So, we need to control the length of different complementary series. The next observation is that in the simply connected case, the only complementary series that goes all the way to $s=1$ is the one associated with $\sigma=1$, i.e. the complementary series that converges to the trivial representation.
\begin{lemma}\label{UnifBound}
    Let $\sigma$ be a unitary representation of $M$. If the end of the complementary series associated with it is $s=1$ (that is, the reducibility point of theorem \ref{PlanMeas} is $s=1$), then $\sigma=1$.
\end{lemma}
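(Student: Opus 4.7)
My plan is to study the Langlands quotient sitting at the endpoint $s=1$ and combine its forced unitarity with the asymptotic behavior of its matrix coefficients. Suppose the reducibility point of theorem \ref{PlanMeas} is $s_0=1$ for some irreducible unitary $\sigma$ of $M$. Then by remark \ref{endIsUnitary}, $\pi := J_{P,\sigma,\chi_\rho}$ is an irreducible unitary representation of $G$, and our task is to identify $\sigma$ from this datum.

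The first step is to show that the matrix coefficients of $\pi$ do not decay at infinity. The leading-exponent computation used in the proof of lemma \ref{IntCond} isolates $\delta_P^{(1/2)(1-s)}$ as the governing central character of the matrix coefficients of $J_{P,\sigma,\chi_{s\rho}}$, and at $s=1$ this degenerates to the trivial character of $A$. To justify this rigorously at the boundary --- where $i_{GM}(\sigma\chi_\rho)$ is reducible --- I would either extend Silberger's analysis of \cite{SilIntOfMatCoef} directly to $s=1$, tracking which central exponents survive in the quotient by means of the geometric lemma, or approximate by $s_n \to 1^-$ from inside the complementary series, use lemma \ref{IntCond} to deduce $J_{P,\sigma,\chi_{s_n\rho}} \notin \hat{G}_p$ for any $p$, and then transfer this to $\pi$ via a closedness argument in the Fell topology together with theorem \ref{deLaat-Siebenand}.

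Next, I would invoke the Howe--Moore vanishing theorem for almost simple algebraic groups over local fields: an irreducible unitary representation of $G$ whose matrix coefficients do not vanish at infinity must possess a nonzero invariant vector, and therefore by irreducibility must be the trivial representation. Applied to $\pi$, this gives $\pi = 1_G$. The uniqueness clause of the Langlands classification then closes the argument: the trivial representation is itself the Langlands quotient of $i_{GM}(\chi_\rho) = \text{Ind}_P^G(\delta_P^{1/2}\cdot\delta_P^{1/2})$, associated with the Langlands data $(P, 1_M, \chi_\rho)$. Comparing this with $\pi = J_{P,\sigma,\chi_\rho}$ forces $\sigma = 1_M$.

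The main obstacle I foresee is the first step --- rigorously transferring the matrix-coefficient estimate from the interior $s<1$ of the complementary series to the boundary $s=1$, where the induced representation becomes reducible and the leading exponents could in principle reshuffle upon passing to the Langlands quotient. A cleaner alternative, which I suspect matches the paper's actual execution, is to read off the list of pairs $(\sigma, s_0)$ with $s_0 = 1$ directly from the explicit Plancherel-pole computations of Abe and Herzig \cite{Abe-Herzig} --- which lean on the Tits classification of rank-one $p$-adic groups in the appendix --- and verify that the simply-connected hypothesis excludes every option other than $\sigma = 1_M$ (in particular ruling out non-trivial unramified characters of $M$ that could otherwise produce a reducibility at $s=1$).
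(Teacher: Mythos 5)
Your plan reaches the same conclusion as the paper but by a genuinely different mechanism. The paper's actual proof avoids matrix-coefficient asymptotics entirely: it notes that $J_{P,\sigma,\chi_\rho}$ is unitarizable (remark \ref{endIsUnitary}), then invokes \cite[Theorem~XI.3.3]{BorelWallach}, which says a uniformly bounded irreducible admissible representation is either tempered, a Langlands quotient with parameter strictly in $(0,1)$, or has a non-compact kernel. Since $s=1$ rules out the first two options, the kernel is non-compact, and because $G=G^+$ (Tits' theorem for simply connected isotropic almost simple groups, \cite[Theorem~I.2.3.1]{Margulis}) every non-compact normal subgroup is all of $G$, so $J_{P,\sigma,\chi_\rho}=1$ and Langlands uniqueness forces $\sigma=1$. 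Your route --- isolate the leading Jacquet-module exponent at $s=1$, observe it is unitary so matrix coefficients do not vanish at infinity, and invoke Howe--Moore --- is a legitimate alternative and reaches the same endpoint via Langlands uniqueness. What the Borel--Wallach route buys is that it packages away all of the asymptotic analysis that you correctly flag as the main obstacle.

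One of your three fallback proposals does not work as stated, and you should be aware of it. The ``approximate by $s_n\to 1^-$ and transfer via closedness'' argument goes the wrong way: $\hat{G}_p$ being closed (theorem \ref{deLaat-Siebenand}) means that a Fell-limit of representations \emph{inside} $\hat{G}_p$ stays inside; it gives no information when the approximating sequence lies \emph{outside} $\hat{G}_p$. In fact the trivial representation itself lies outside every $\hat{G}_p$, so there is no topological obstruction to your complementary-series sequence converging to it --- that is exactly the phenomenon theorem \ref{filtration} is about, and using it here would be circular. Your first fallback (extend the Jacquet-module/exponent computation of lemma \ref{IntCond} directly to $s=1$) is sound: the geometric lemma and the Frobenius reciprocity chain in that proof do not require irreducibility of the induced representation, so $\sigma\delta_P^{1/2}$ is still an exponent of $\bar{r}_{MG}(J_{P,\sigma,\chi_\rho})$, the corresponding unnormalized exponent is the unitary character $\sigma|_A$, and Casselman's asymptotics then give a non-decaying matrix coefficient along $A$, which is what Howe--Moore needs.
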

\begin{proof}
    Let $\sigma \in \hat{M}$ be such that the reducibility point at the end of the complementary series is in $s=1$. Then, $J_{P,\sigma,\chi_\rho}$ is unitarizable, as ends of complementary series are always unitarizable (see remark \ref{endIsUnitary}). Now, by \cite[Theorem~XI.3.3]{BorelWallach} we have that uniformly bounded representations (and in particular unitarizable representations) are either tempered, or are of the form $J_{P,\sigma,\chi_{s\rho}}$ with $s$ strictly between $0$ and $1$, or have a non-compact kernel. So, for $J_{P,\sigma,\chi_{\rho}}$ to be uniformly bounded, it must have a non-compact kernel. The kernel, which is a normal subgroup, is either central (and hence compact) or contains the group $G^+$ generated by unipotent elements \cite[Theorem~I.1.5.6]{Margulis}. But, $\textbf{G}$ is a simply connected, $k$-isotropic and $k$-almost simple group, so $G$ coincides with $G^+$ \cite[Theorem~I.2.3.1]{Margulis}, hence there are no non-compact non-trivial normal subgroups, and $\sigma=1$.
\end{proof}
\begin{remark}
    Lemma \ref{UnifBound} is in fact the only place in the proof where we use the assumption that $G$ is simply connected. A non-simply connected group may exhibit a complementary series that reaches all the way to $s=1$, and at the end instead of the trivial representation, it will converge to a character of $G/G^+$.
\end{remark}
The last result that we need is the fact that for a fixed group, there are only finitely many possible ends of complementary series. We know from proposition \ref{redPointUnit} that in order to find the possible complementary series coming from supercuspidal representations of Levi factors of corank-1 parabolics, we should detect the poles of the Plancherel measure. This was done by Shahidi, for $G$ quasi-split and $\sigma$ generic \cite[Theorem~8.1]{Shahidi}. In the rank one case, the quasi-split groups are just $SL_2(k)$ and $SU_3(h)$ associated with a quadratic extension. This allowed Bader and Sauer to complete the proof of theorem \ref{filtration} in these cases \cite{Bader-Sauer}. If $G$ is not quasi-split, then it has a unique quasi-split inner form $G'$. The parabolic $P=MN$ corresponds in $G'$ to a corank-1 parabolic $P'=M'N'$. Now, if $n=rd$, and $D$ is a central division algebra of degree $d$ over $k$, we have the Jacquet-Langlands correspondence between discrete series representations of $GL_n(k)$ and its inner form $GL_r(D)$ \cite{DKV}. In case $G$ is rank one, the theory applies to $M$ and $M'$, since $^0M$ is a reductive p-adic group that is anisotropic, and such groups are always inner forms of $A_n$ \cite{Tits2} (after maybe passing to a restriction of scalars). The cuspidal representation $\sigma$ of $M$ whose end of complementary series we are trying to find corresponds to a discrete series representation $\sigma'$ of $M'$. Now, one can try to apply Shahidi's methods in $G'$ for $\sigma'$, or more accurately to the supercuspidal representation it is induced from. This allowed some authors to deal with specific cases \cite{MuicSavin, TadicDivAlg}. Abe and Herzig \cite{Abe-Herzig} were able to finish the cases of rank one groups using this machinery. They didn't determine the Plancherel measure precisely, but used a global argument to show that there are only finitely many possible poles in this case for a fixed group.
\begin{theorem}\cite[Remark~3.61]{Abe-Herzig}\label{Abe-Herzig}
    if $G$ is rank one, then there is a finite set of numbers in $[0,1]$ that may be ends of complementary series of representations. 
\end{theorem}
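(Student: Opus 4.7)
The plan is as follows. By Theorem \ref{PlanMeas} and Proposition \ref{redPointUnit}, the set of endpoints of complementary series coincides with the set of values $s_0 \in [0,1]$ at which $\mu_{G,\sigma}$ has a real pole, as $\sigma$ ranges over $\hat M$ (in the rank-one case $^0M$ is compact, so every irreducible representation of $M$ is supercuspidal, and only those with $w_0\sigma \cong \sigma$ can contribute a positive endpoint). Thus it suffices to show that as $\sigma$ varies, the possible locations of real poles of $\mu_{G,\sigma}$ in $[0,1]$ form a finite set.

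First I would reduce to the quasi-split inner form. Let $\mathbf{G}'$ be the quasi-split inner form of $\mathbf{G}$, which for $p$-adic rank-one groups is either $\mathrm{SL}_2$ or $\mathrm{SU}_3(h)$ for a quadratic extension $h/k$; let $P' = M'N'$ be the corank-one parabolic in $G'$ matching $P$. Since $^0M$ is reductive and anisotropic, it is an inner form of a group of type $A_n$ (after a restriction of scalars), so the Jacquet-Langlands correspondence assigns to each unitary supercuspidal $\sigma \in \hat M$ a discrete series $\sigma'$ of $M'$. The compatibility of Plancherel measures under this transfer, as exploited in \cite{MuicSavin, TadicDivAlg}, yields $\mu_{G,\sigma}(s) = \mu_{G',\sigma'}(s)$ up to a non-vanishing holomorphic factor, so the set of poles in $[0,1]$ is preserved; we may therefore assume $G$ itself is quasi-split.

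Next I would invoke Shahidi's reciprocity. For the quasi-split group $G'$ and a generic $\sigma'$, Shahidi's theorem provides the factorization
\[
\mu_{G',\sigma'}(s) \; \doteq \; \prod_i \gamma(is, \sigma', r_i, \psi)\, \gamma(-is, \sigma', r_i, \psi),
\]
where the $r_i$ are the irreducible constituents of the adjoint action of the $L$-group of $M'$ on $\mathrm{Lie}(\,^L N')$, and $\doteq$ denotes equality up to nonzero constants. In the rank-one cases these $r_i$ are explicit and few: the standard representation for $\mathrm{SL}_2$, and at most the Asai and twisted symmetric/exterior square representations for $\mathrm{SU}_3(h)$. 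Hence the real poles of $\mu_{G',\sigma'}$ in $[0,1]$ must lie among the poles of the local Langlands-Shahidi $L$-factors $L(s, \sigma', r_i)$. To force only finitely many possible locations, I would globalize: choose a number field $F$ with a place $v_0$ satisfying $F_{v_0} = k$, and realize $\sigma'$ as the $v_0$-component of a cuspidal automorphic representation $\Sigma'$ on $M'(\mathbb{A}_F)$, with all other local components unramified and of sufficiently generic type to prevent local cancellation. The global $L$-function $L(s, \Sigma', r_i)$ is meromorphic with polar divisor contained in an a priori finite set of rational numbers determined by the $r_i$ (for Asai and symmetric/exterior square $L$-functions on $\mathrm{GL}_2$-type data, these are classically known). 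Since all other local factors are unramified and have known poles, the pole of $L(s, \sigma', r_i)$ at $v_0$ must match one of the global pole positions; this confines it to a finite set independent of $\sigma'$.

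The hard part is executing this global-to-local extraction uniformly in $\sigma'$: one must arrange the globalization so that the auxiliary components never cancel the local pole at $v_0$, and one must treat non-generic $\sigma'$ (which Shahidi's original formalism excludes) by descent from generic induced data or via the inner-form transfer from the previous step. This is precisely the content of Abe-Herzig's argument in \cite{Abe-Herzig}: by exploiting the very small list of possible $r_i$ and Levi types $M'$ afforded by rank-one $p$-adic groups, together with a global generic automorphic lift, one concludes that the set of real poles of $\mu_{G',\sigma'}$ in $[0,1]$, as $\sigma'$ varies, lies in a fixed finite subset of $[0,1]$. Combined with the Jacquet-Langlands reduction, this proves the theorem.
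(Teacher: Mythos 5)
This statement is not proved in the paper: it is cited directly from Abe--Herzig, and the surrounding text only sketches their strategy (Jacquet--Langlands transfer to the quasi-split inner form, followed by Shahidi's machinery and a global argument). Your reconstruction correctly identifies and organizes exactly this strategy: endpoints of complementary series are the real poles of $\mu_{G,\sigma}$ in $(0,1]$ by Theorem \ref{PlanMeas} and Proposition \ref{redPointUnit}; one transfers $\sigma$ across the Jacquet--Langlands correspondence to a discrete series $\sigma'$ of the matching Levi $M'$ in the quasi-split inner form $G'$, uses the compatibility of Plancherel measures under transfer, and then controls the real poles of $\mu_{G',\sigma'}$ by Shahidi's factorization and a globalization argument. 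This is the same route the paper describes for \cite{Abe-Herzig}.

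However, there is a concrete error that would derail the execution as written. You assert that the quasi-split inner form $G'$ of a rank-one $p$-adic group is ``either $\mathrm{SL}_2$ or $\mathrm{SU}_3(h)$.'' This is false: those are the only \emph{quasi-split rank-one} groups, but the quasi-split inner form of a rank-one group is usually of higher rank. As the appendix shows, $\mathrm{SL}_2(D)$ for $D$ of degree $d$ has quasi-split inner form $\mathrm{SL}_{2d}$; $\mathrm{SU}_4(h)$ and $\mathrm{SU}_3(D,h)$ transfer to quasi-split forms of $\mathrm{SL}_4$; $\mathrm{SU}_2(D,h)$ and $\mathrm{SU}_3(D,h)$ of type $C$ transfer to $\mathrm{Sp}_4$ and $\mathrm{Sp}_6$; and the type-$D$ cases transfer to quasi-split $\mathrm{SO}_8$ and split $\mathrm{SO}_{10}$. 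In $G'$ the parabolic $P' = M'N'$ is still corank one but is no longer minimal, and $\sigma'$ is a discrete series of a nonabelian $M'$ rather than supercuspidal; Shahidi's factorization must then be applied to the supercuspidal support of $\sigma'$ in $G'$, as the paper indicates. Consequently the list of adjoint constituents $r_i$ is larger and varies with $G$ (it includes $\mathrm{GL}_n$-type symmetric and exterior square cases in addition to Asai), and the globalization must handle discrete series $\sigma'$ rather than just supercuspidals. Your claim that the Plancherel measures agree up to a nonvanishing holomorphic factor under Jacquet--Langlands also needs a precise reference (this compatibility is exactly the technical input from \cite{MuicSavin,TadicDivAlg,Abe-Herzig} and is not automatic). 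None of this changes the shape of the argument, but as stated the passage from the transfer step to Shahidi's explicit $\gamma$-factor list does not go through without correcting the identification of $G'$ and $M'$.
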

Using lemmas \ref{IntCond} and \ref{UnifBound} together with theorem \ref{Abe-Herzig}, we are now in a position to complete the proof of theorem \ref{filtration}.
\begin{proof}[proof of theorem \ref{filtration}]
    Let $A$ be a closed set in $\hat{G}$. In the finite set of possible ends of complementary series, promised by theorem \ref{Abe-Herzig}, let $s_0$ be the maximal element that is not one. Then, lemma \ref{IntCond} together with lemma \ref{UnifBound} imply that every representation that is not on the complementary series associated with $1 \in \hat{M}$ belongs to $\hat{G}_\frac{2}{1-s_0}$. Hence, if the set $A$ is not contained in any ideal of the form $\hat{G}_p$, it contains representations converging to the trivial representation, and therefore $1 \in A$.
\end{proof}

\section{The Restriction Map From a Semisimple Group to a Lattice}\label{sec5}

In this section, we demonstrate an application of the spectral gap absorption principle, by giving an affirmative solution to a conjecture of Bekka and Valette, given in \cite{BeV}.

\begin{conjecture}[Bekka-Valette]
    Let $G$ be a non-compact semisimple Lie group, and $\Gamma$ a lattice. Then, there exist unitary representations of $\Gamma$ that are not weakly contained in any representation restricted from $G$.
\end{conjecture}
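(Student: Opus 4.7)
My plan is to prove the stronger form of Theorem \ref{BekValIntro}: every non-trivial finite-dimensional unitary representation $\rho$ of $\Gamma$ fails to be weakly contained in any restriction $\pi|_\Gamma$ coming from a unitary representation of $G$. Since $\Gamma$ is residually finite and therefore admits non-trivial finite-dimensional unitary representations (lift any non-trivial unitary representation of a non-trivial finite quotient), this conclusion immediately forces the image of the restriction map to be non-dense in the Fell topology. Passing to an irreducible summand if necessary, I may assume $\rho$ is irreducible.

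Assume for contradiction that $\rho \prec \pi|_\Gamma$. The identity element of $\mathrm{End}(V_\rho) \cong V_\rho \otimes V_{\bar\rho}$ is $\Gamma$-invariant, so $1_\Gamma$ embeds into $\rho \otimes \bar\rho$; tensoring the weak containment by the finite-dimensional $\bar\rho$ preserves it, yielding $1_\Gamma \prec \pi|_\Gamma \otimes \bar\rho$ as $\Gamma$-representations. Applying the induction functor $\mathrm{Ind}_\Gamma^G$ (which preserves weak containment) together with the projection formula $\mathrm{Ind}_\Gamma^G(\tau|_\Gamma \otimes \sigma) \cong \tau \otimes \mathrm{Ind}_\Gamma^G \sigma$ gives
\[
L^2(G/\Gamma) \;=\; \mathrm{Ind}_\Gamma^G 1_\Gamma \;\prec\; \pi \otimes \mathrm{Ind}_\Gamma^G \bar\rho,
\]
and since the constants realize $1_G$ as a $G$-subrepresentation of $L^2(G/\Gamma)$, we conclude $1_G \prec \pi \otimes \mathrm{Ind}_\Gamma^G \bar\rho$. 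To derive the contradiction it now suffices to prove the key lemma that $\mathrm{Ind}_\Gamma^G \bar\rho$ has a spectral gap whenever $\bar\rho$ is a non-trivial finite-dimensional unitary $\Gamma$-representation: Theorem \ref{main theorem} applied to $\pi$ and $\mathrm{Ind}_\Gamma^G \bar\rho$ then yields a spectral gap for their tensor product, contradicting the weak containment above.

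The main obstacle is this key lemma, which is precisely where the hypothesis that $L^2_0(G/\Gamma)$ has a spectral gap (imported as the sole extra input beyond the main theorem) combines with the finite-dimensionality of $\bar\rho$. My plan is to argue by contradiction: given a sequence $(f_n)$ of unit almost-$G$-invariant vectors in $\mathrm{Ind}_\Gamma^G \bar\rho \cong L^2(\Gamma\backslash G;\mathcal{V}_{\bar\rho})$, unitarity of $\bar\rho$ makes the pointwise norm $F_n(x) := \|f_n(x)\|_{V_{\bar\rho}}$ a well-defined unit almost-$G$-invariant vector in $L^2(\Gamma\backslash G)$, and the spectral gap of $L^2_0(G/\Gamma)$ forces $F_n \to 1$ strongly in $L^2$. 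Writing $f_n = F_n \tilde f_n$ with $\tilde f_n$ sphere-valued, finite-dimensionality of $V_{\bar\rho}$ (compactness of its unit sphere) should permit extraction of a non-zero weak-$L^2$ limit that is $G$-invariant; by Frobenius reciprocity such an invariant section of $\mathcal{V}_{\bar\rho}$ corresponds to a non-zero vector in $V_{\bar\rho}^\Gamma$, contradicting the irreducibility and non-triviality of $\bar\rho$. The delicate step requiring the most work is ensuring that the weak limit does not vanish; I would likely approach this by working instead with the rank-one positive operators $P_n(x) = f_n(x) \otimes \overline{f_n(x)}$ taking values in $\mathrm{End}(V_{\bar\rho})$, using the $\Gamma$-decomposition $\bar\rho \otimes \rho \cong 1_\Gamma \oplus \mathrm{End}_0(V_{\bar\rho})$: the trace component of $P_n$ equals $F_n^2$ and contributes the already-understood piece in $L^2(\Gamma\backslash G)$, while a quantitative lower bound on the $L^2$-norm of the traceless part (controlled below by $1 - 1/\dim V_{\bar\rho} > 0$) produces a non-vanishing almost-$G$-invariant vector in the adjoint-induced representation, against which Schur's lemma provides the needed obstruction.
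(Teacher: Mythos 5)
Your first paragraph matches the paper's argument line by line: assume $\rho \prec \sigma|_\Gamma$ for irreducible non-trivial finite-dimensional $\rho$, note $1_\Gamma \le \rho\otimes\bar\rho$, induce, apply the projection formula $\mathrm{Ind}_\Gamma^G(\sigma|_\Gamma\otimes\bar\rho)\cong\sigma\otimes\mathrm{Ind}_\Gamma^G\bar\rho$, conclude $1_G\prec\sigma\otimes\mathrm{Ind}_\Gamma^G\bar\rho$, and play this against the absorption principle (Theorem~\ref{main theorem}) once one knows $\mathrm{Ind}_\Gamma^G\bar\rho$ has a spectral gap. The sole place where you diverge from the paper is the final ingredient: the paper does not prove that weak cocompactness of $\Gamma$ transfers the spectral gap of $\bar\rho$ to $\mathrm{Ind}_\Gamma^G\bar\rho$ --- it cites Margulis, Proposition~III.1.11 of his book, together with Bekka's Lemma~3 (for Lie groups) and Clozel/uniformity (in the $p$-adic case) for the fact that $L^2_0(G/\Gamma)$ has a spectral gap. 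You instead attempt to re-derive Margulis's proposition in the finite-dimensional case, and this is where your sketch has a genuine gap.

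Concretely: your first route writes $f_n = F_n\tilde f_n$, notes $F_n\to 1$ in $L^2$ by the spectral gap of $L^2_0(G/\Gamma)$, and hopes to extract a non-zero weak-$L^2$ limit of $\tilde f_n$. But the weak limit of a bounded sequence can vanish, and nothing in the setup forbids $\tilde f_n\rightharpoonup 0$ (indeed $f_n\rightharpoonup 0$ is forced, since a non-zero weak limit of $f_n$ itself would be an invariant vector). You acknowledge this and propose the operator-valued $P_n(x)=f_n(x)\otimes\overline{f_n(x)}$ as a backup, but $\|P_n(x)\|_{\mathrm{HS}}=F_n(x)^2$, so $P_n\in L^2(\Gamma\backslash G;\mathrm{HS}(V))$ requires $F_n\in L^4$, which the normalization $\|F_n\|_{L^2}=1$ does not supply; your ``quantitative lower bound on the $L^2$-norm of the traceless part'' is $(1-1/d)\int F_n^4$, which may diverge. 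Even if you patch this --- say by passing to the $1$-homogeneous map $R_n(x)=f_n(x)\otimes\overline{f_n(x)}/F_n(x)$, which lies in $L^2$ with $\|R_n\|=1$ and whose traceless part $S_n$ has $\|S_n\|_{L^2}^2=1-1/d$ --- you arrive at a bounded-below almost-$G$-invariant sequence in $\mathrm{Ind}_\Gamma^G(\mathrm{End}_0(\bar\rho))$, and $\mathrm{End}_0(\bar\rho)$ is again a non-trivial finite-dimensional $\Gamma$-representation with no invariants. You have thus reproduced the very statement you set out to prove, with $\bar\rho$ replaced by $\mathrm{End}_0(\bar\rho)$; the argument does not close. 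To finish you either need to invoke Margulis's proposition as the paper does, or supply an actual quantitative inheritance lemma: for example, show that weak cocompactness gives a compact $K\subset G$ and $\epsilon_0>0$ so that $\max_{k\in K}\|\pi(k)\xi-\xi\|\ge\epsilon_0\|\xi\|$ for every $\xi\in L^2_0(G/\Gamma)$, and use this uniform bound --- not just $F_n\to 1$ --- together with the uniform spectral gap of the finite-dimensional $\bar\rho$ on $\Gamma$, to derive a contradiction directly from the equivariance relation $f_n(g\gamma)=\bar\rho(\gamma)^{-1}f_n(g)$. As written, that step is missing.
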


This conjecture can be phrased in the following way: Does the restriction map from $\tilde{G}$ to $\tilde{\Gamma}$ have a dense image? (Here, $\tilde{H}$ is the set of all unitary representations of a locally compact, second countable group $H$ on separable Hilbert spaces, with the Fell topology). Bekka and Valette related this to a map from $C^*(\Gamma)$ to the multiplier algebra $M(C^*(G))$, which is injective if and only if the conjecture fails. In their paper, they showed the conjecture to hold for many groups: Groups with property (T), any lattice in $SL_2(\mathbb{R})$, any non-uniform lattice in $SL_2(\mathbb{C})$ and any arithmetic lattice in $SO(n,1)$ for $n \ne 3, 7$. We prove that the conjecture holds for a wider family of groups - namely, semisimple groups of the form $G=\prod \textbf{G}_i(k_i)$, where each $k_i$ is a local field of characteristic zero and each $\textbf{G}_i$ is an almost simple, connected and simply connected algebraic $k_i$-group that is $k_i$-isotropic. We note that for semisimple real algebraic groups, the connected component identifies with $G^+$, and hence the spectral gap absorption applies to any connected algebraic Lie group. For semisimple Lie groups with finite center, the spectral gap absorption still holds (see, for example, \cite[Lemma~4]{BekkaInvMean}).

\begin{theorem}\label{BekVal}
    Let $G$ a group that belongs to one of the following families:
    \begin{enumerate}
        \item Connected semisimple Lie groups with finite center.
        \item $G=\prod_{i=1}^n \textbf{G}_i(k_i)$, where each $k_i$ is a local field of characteristic zero and $\textbf{G}_i$ is an almost simple, connected and simply connected algebraic $k_i$-group, which is $k_i$-isotropic.
    \end{enumerate}
    Let $\Gamma$ be a lattice in $G$. Then, the image of the restriction map from $\tilde{G}$ to $\tilde{\Gamma}$ has a non-dense image in Fell topology. Moreover, every non-trivial finite dimensional representation of $\Gamma$ is not weakly contained in any representation restricted from $G$. 
\end{theorem}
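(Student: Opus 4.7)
The first, density statement of the theorem follows directly from the moreover clause, so I focus on the latter. Decomposing into irreducibles, I may assume $\sigma$ is irreducible and non-trivial. Arguing by contradiction, suppose $\sigma \prec \pi|_\Gamma$ for some unitary representation $\pi$ of $G$.

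The central tool will be the following lemma: for any unitary representation $\rho$ of $G$, one has $1_\Gamma \prec \rho|_\Gamma$ if and only if $1_G \prec \rho$. The implication $1_G \prec \rho \Rightarrow 1_\Gamma \prec \rho|_\Gamma$ is immediate since $\Gamma \leq G$. For the converse, I argue contrapositively: assume $\rho$ has a spectral gap. By the spectral gap absorption principle (Theorem \ref{main theorem}), the tensor $\rho \otimes L^2_0(G/\Gamma)$ has a spectral gap, and consequently so does $\rho \otimes L^2(G/\Gamma) = \rho \oplus (\rho \otimes L^2_0(G/\Gamma))$. Via the projection formula $\mathrm{Ind}_\Gamma^G(\rho|_\Gamma) \cong \rho \otimes L^2(G/\Gamma)$ together with Fell continuity of induction, $1_\Gamma \prec \rho|_\Gamma$ would force $L^2(G/\Gamma) \prec \rho \otimes L^2(G/\Gamma)$; since $L^2(G/\Gamma)$ contains $1_G$ as the line of constants, this contradicts the spectral gap.

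To apply the lemma, I use that $\sigma$ is finite-dimensional so that $\mathrm{id}_{H_\sigma}$ determines a $\Gamma$-invariant vector in $\sigma \otimes \bar{\sigma}$, giving $1_\Gamma \subset \sigma \otimes \bar{\sigma}$. Combined with $\sigma \prec \pi|_\Gamma$, this yields $1_\Gamma \prec \sigma \otimes \bar{\sigma} \prec (\pi \otimes \bar{\pi})|_\Gamma$, and the lemma forces $1_G \prec \pi \otimes \bar{\pi}$. If $\pi$ had a spectral gap, spectral gap absorption would provide one for $\pi \otimes \bar{\pi}$, a contradiction; hence $\pi$ has no spectral gap.

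This reduces the problem to the case where $\pi$ admits $G$-almost invariant vectors, which is where the main obstacle lies. The plan is to first peel off the maximal trivial subrepresentation $\pi_\mathrm{inv}$, reducing (since $\sigma \neq 1_\Gamma$) to $\sigma \prec \pi'|_\Gamma$ with $\pi'$ devoid of $G$-invariant vectors, and then decompose $\pi' = \pi'_U \oplus \pi'_{U^c}$ according to a Fell-open neighborhood $U$ of $1_G$ in $\hat{G}$. The piece $\pi'_{U^c}$ has support inside the closed set $\hat{G} \setminus U$, which avoids $1_G$, so by the filtration Theorem \ref{filtration} it lies in $\hat{G}_p$ for some finite $p$ and thus has a genuine spectral gap; the previous step then rules out $\sigma \prec \pi'_{U^c}|_\Gamma$. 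The concluding step, where I expect the main subtlety, is to preclude $\sigma \prec \pi'_U|_\Gamma$ for every such $U$; this I would deduce from Fell continuity of the restriction map combined with the observation that $\sigma$, being irreducible and distinct from $1_\Gamma$, is not weakly contained in $1_\Gamma$.
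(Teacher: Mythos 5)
Your preliminary reductions are sound: the lemma that $1_\Gamma \prec \rho|_\Gamma$ iff $1_G \prec \rho$ is correct (and uses weak cocompactness of $\Gamma$ together with the absorption principle, as you observe), and from $1_\Gamma \le \sigma \otimes \bar\sigma \prec (\pi\otimes\bar\pi)|_\Gamma$ it does follow that $\pi$ cannot have a $G$-spectral gap. But this is as far as your argument goes, and the remaining ``case where $\pi$ has no spectral gap'' is not addressed by your sketch. The decomposition $\pi' = \pi'_U \oplus \pi'_{U^c}$ and the prime-ideal dichotomy for irreducible $\sigma$ do reduce you to $\sigma \prec \pi'_U|_\Gamma$ for every Fell-neighborhood $U$ of $1_G$, but the final step --- deriving a contradiction by letting $U$ shrink --- rests on a continuity of restriction that does not hold. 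Restriction $\widetilde G \to \widetilde\Gamma$ is not Fell-continuous at $1_G$; in fact, for a complementary series representation $\pi_s$ of a rank-one group arbitrarily close to $1_G$, the restriction $\pi_s|_\Gamma$ is typically weakly equivalent to the regular representation of $\Gamma$ and in particular is not supported near $1_\Gamma$. So $\sigma \prec \pi'_U|_\Gamma$ for all $U$ yields no contradiction, and the gap is genuine.

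The paper's proof sidesteps this entirely by working with the contragredient on the \emph{lattice} side and then inducing to $G$, rather than tensoring on the $G$-side. Since $\sigma$ is non-trivial, irreducible and finite-dimensional, $\bar\sigma$ has a $\Gamma$-spectral gap; by Margulis' argument (using weak cocompactness, \cite[Proposition~III.1.11]{Margulis}), $\mathrm{Ind}_\Gamma^G(\bar\sigma)$ then has a $G$-spectral gap. On the other hand $1_\Gamma \le \sigma \otimes \bar\sigma \prec (\pi|_\Gamma)\otimes\bar\sigma$, and inducing to $G$ (using $\mathrm{Ind}_\Gamma^G((\pi|_\Gamma)\otimes\bar\sigma) \cong \pi\otimes\mathrm{Ind}_\Gamma^G(\bar\sigma)$) yields $1_G \le L^2(G/\Gamma) \prec \pi\otimes\mathrm{Ind}_\Gamma^G(\bar\sigma)$. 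This contradicts the spectral gap absorption principle applied to the $G$-representation $\mathrm{Ind}_\Gamma^G(\bar\sigma)$, with no case distinction on $\pi$ needed. The missing ingredient in your sketch is precisely this induction step: replace ``$\pi$ has no spectral gap, now what?'' with ``induce the spectral-gap representation $\bar\sigma$ to $G$ and apply absorption there.''
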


To prove theorem \ref{BekVal}, we use two facts. The first is the spectral gap absorption principle, theorem \ref{main theorem}. The second thing that we need is the fact that $L^2_0(G/\Gamma)$ has a spectral gap.
\begin{definition}
    A lattice $\Lambda$ in a locally compact group $H$ is said to be \textit{weakly cocompact} if the $H$-representation $L^2_0(H/\Lambda)$ has a spectral gap.
\end{definition}
Any lattice $\Gamma$ in a group $G$ that satisfies the assumptions of theorem \ref{BekVal} is weakly cocompact. For Lie groups, it is proved in \cite[Lemma~3]{BekkaInvMean}. For arithmetic lattices in the algebraic case, it follows (after maybe decomposing $\Gamma$ to a product of irreducible lattices) from the work of Clozel \cite{Clozel}. In the rank-1 p-adic case, for non-arithmetic lattices, it follows from the fact that lattices in p-adic groups are uniform, and every cocompact lattice is weakly cocompact \cite[Corollary~III.1.10]{Margulis}. We are now ready to prove theorem \ref{BekVal}
\begin{proof}[proof of theorem \ref{BekVal}]
    Let $\pi$ be some non-trivial, irreducible, finite dimensional representation. Assume towards contradiction that $\pi$ is weakly contained in some representation $\sigma$ restricted from $G$ to $\Gamma$. We note that $1 \nprec \bar{\pi}$ for the contragredient representation $\bar{\pi}$ of $\pi$. But, being finite dimensional, $\pi$ and $\bar{\pi}$ are not weakly mixing: that is, $1 \le \pi \otimes \bar{\pi}$. Now, since $\pi \prec \sigma|_\Gamma$, we get that $1 \prec \bar{\pi} \otimes \sigma|_\Gamma$. Taking induction to $G$ on both sides of the equation, we get that $1 \le L^2(G/\Gamma)=\text{Ind}(1)\prec \text{Ind}(\bar{\pi} \otimes \sigma|_\Gamma)$. Now, $\text{Ind}(\bar{\pi} \otimes \sigma|_\Gamma)=\text{Ind}(\bar{\pi}) \otimes \sigma$ by \cite[Theorem~E.2.5]{BekHarVal}. So we have that $1 \prec \text{Ind}(\bar{\pi}) \otimes \sigma$. We now note that the spectral gap of $\bar{\pi}$ is inherited by the induced representation $\text{Ind}(\bar{\pi})$. This follows from the fact that the lattice $\Gamma$ is weakly cocompact in $G$ (that is, $1 \nprec L^2_0(G/\Gamma)$, by an argument of Margulis \cite[Proposition~III.1.11]{Margulis}. So, we have shown that although $\text{Ind}(\bar{\pi})$ has a spectral gap, $\text{Ind}(\bar{\pi}) \otimes \sigma$ doesn't have a spectral gap. This contradicts the spectral gap absorption principle for $G$, and hence such a $\sigma$ doesn't exist.
\end{proof}

\appendix
\section{the rank one p-adic groups}\label{appen}
We recall the classification of almost simple groups over local fields. A semisimple algebraic group over an algebraically closed field is determined up to central isogeny by its Dynkin diagram. To give the form of over the local field, we need to know two more things \cite{Satake, Tits2}:
\begin{enumerate}
    \item The semisimple anisotropic kernel, which is the derived subgroup of the Levi of a minimal parabolic.
    \item The Tits index: this is the Dynkin diagram, with certain vertices distinguished - the ones that vanish on the maximal split torus, and an action of the Galois group on the vertices known as the $\ast$-action.
\end{enumerate}
An algebraic $k$-group is called \textit{quasi-split} if its semisimple anisotropic kernel is trivial. Two algebraic groups are called \textit{inner forms} of each other if they share the same $\ast$-action. Each class of inner forms contains exactly one quasi-split group.

We give here the classification of the absolutely almost simple, rank one p-adic groups, due to Tits \cite{Tits1}. Up to restriction of scalars and passing to a different p-adic field, this yields all almost simple groups (including those that are not absolutely almost simple). We note that the description we give is not always that of the simply connected group - when the Dynkin diagram is of type $D_n$, one needs to pass to the simply connected cover. We use Tits' notation $^gX^t_{n,r}$, as in \cite{Tits2}, for the Tits index. Here, \textit{X} is the type of the Dynkin diagram (i.e, the group over the algebraic closure); \textit{n} and \textit{r} are respectively the rank over the algebraic closure and the rank over $k$; \textit{g} denotes the order of the quotient of the Galois group that acts faithfully on the diagram; \textit{t} stands for the degree of a division algebra occurring in the definition (in the exceptional cases it stands for something else, but this is irrelevant for rank one p-adic groups). The Tits index of an anisotropic kernel may be deduced from the Tits index of a group by erasing all distinguished orbits. Absolutely simple anisotropic p-adic groups are always inner forms of $A_n$.
\begin{enumerate}
    \item The group $SL_2(k)$. It is the split form of $SL_2$. Its Tits index notation is $^1A^1_{1,1}$, and its Tits index diagram is:
    \[
    \begin{tikzpicture}
    \draw (0,0) node (a1) {$\bullet$};
    \draw (a1) circle[radius=2mm];
    \end{tikzpicture}
    \]
    \item The group $SL_2(D)$, where $D$ is a central division algebra of degree $d \geq 2$ over $k$. It is a non-quasi-split form of $SL_{2d}$. Its Tits index notation is $^1A^d_{2d-1,1}$, and its Tits index diagram is:
    \[
    \begin{tikzpicture}
    \path[-] (0,0) node (a1) {$\bullet$}
             (1, 0) node (dots1) {$\cdots$}
             (2, 0) node (a2) {$\bullet$}
             (3, 0) node (a3) {$\bullet$}  
             (4, 0) node (a4) {$\bullet$} 
             (5, 0) node (dots2) {$\cdots$}
             (6, 0) node (a5) {$\bullet$};
    \draw[-] (a1) -- (dots1) -- (a2) -- (a3) -- (a4) -- (dots2) -- (a5);
    \draw (a3) circle[radius=2mm];
    \end{tikzpicture}
    \]
    \item The group $SU_3(h)$, where $k' / k$ is a quadratic extension, with an involution $\sigma$ of the second kind and $h$ a non-degenerate hermitian form relative to $\sigma$. It is a quasi-split form of $SL_3$. Its Tits index notation is $^2A^1_{2,1}$, and its Tits index diagram is:
    \[
    \begin{tikzpicture}
    \path (0,0) node {$\bullet$}
          (0,0.5) node (a) {}
          (0,1) node {$\bullet$};
    \draw (a) ellipse[x radius=2.5mm, y radius=7.5mm];
    \draw [-] (0,0) to [out=30,in=-90] (0.5,0.5);
    \draw [-] (0,1) to [out=-30,in=90] (0.5,0.5);
    \draw [<->] (-0.5,1) to [out=-150,in=150] (-0.5,0);
    \end{tikzpicture}
    \]
    \item The group $SU_4(h)$, where $k' / k$ is a quadratic extension, with an involution $\sigma$ of the second kind and $h$ a non-degenerate hermitian form relative to $\sigma$. It is a non-quasi-split form of $SL_4$. Its Tits index notation is $^2A^1_{3,1}$. The same index corresponds to the group $SU_3(D,h)$, where $D$ is a central division algebra of degree $2$ over $k$, with an involution $\sigma$ of the first kind, first type and $h$ is a non-degenerate hermitian form relative to $\sigma$. In the second case, Tits chooses to denote the index by $^2D^2_{3,1}$. The Tits index diagram is:
    \[
    \begin{tikzpicture}
    \path (0,0) node (a1) {$\bullet$}
          (0,0.5) node (c) {}
          (0,1) node (a3) {$\bullet$}
          (1,0.5) node (a2) {$\bullet$};
    \draw[-] (a1) -- (a2) -- (a3);
    \draw (c) ellipse[x radius=2.5mm,y radius=7.5mm];
    \draw [<->] (-0.5,1) to [out=-150,in=150] (-0.5,0);
    \end{tikzpicture}
    \]
    \item The group $SU_2(D,h)$ where $D$ is a quaternion division algebra over $k$, with an involution $\sigma$ of the first kind, first type and $h$ is a non-degenerate skew-hermitian form relative to $\sigma$. It is a non-quasi-split form of $Sp_4$. Its Tits index notation is $C^2_{2,1}$, and its Tits index diagram is:
    \[
    \begin{tikzpicture}
    \path (0,0) node (a1) {$\bullet$}
          (0.4,0) node (arrow) {$\impliedby$}
          (1,0) node (a2) {$\bullet$};
    \draw (a2) circle[radius=2mm];
    \end{tikzpicture}
    \]
    \item The group $SU_3(D,h)$ where $D$ is a quaternion division algebra over $k$, with an involution $\sigma$ of the first kind, first type and $h$ is a non-degenerate skew-hermitian form relative to $\sigma$. It is a non-quasi-split form of $Sp_6$. Its Tits index notation is $C^2_{3,1}$, and its Tits index diagram is:
    \[
    \begin{tikzpicture}
       \path (0,0) node (a1) {$\bullet$}
             (1,0) node (a2) {$\bullet$}
             (2,0) node (a3) {$\bullet$}
             (1.5,0) node (arrow) {$\impliedby$};
    \draw[-] (a1) -- (a2);         
    \draw (a2) circle[radius=2mm];
    \end{tikzpicture}
    \]
    \item The group $SU_4(D,h)$ where $D$ is a central division algebra over $k$ of degree $2$, with an involution $\sigma$ of the first kind, first type and $h$ is a non-degenerate hermitian form relative to $\sigma$. It is a non-quasi-split form of $SO_8$. Its Tits index notation is $^2D^2_{4,1}$, and its Tits index diagram is:
    \[
    \begin{tikzpicture}
    \path (0,0) node (a1) {$\bullet$}
          (1,0) node (a2) {$\bullet$}
          (2,0.5) node (a3) {$\bullet$}
          (2,-0.5) node (a4) {$\bullet$};
    \draw [-] (a1) -- (a2) -- (a3);
    \draw [-] (a2) -- (a4);
    \draw (a2) circle[radius=2mm];
    \draw [<->] (2.4, 0.5) to [out=-30,in=30] (2.4,-0.5);
    \end{tikzpicture}
    \]
    \item The group $SU_5(D,h)$ where $D$ is a central division algebra over $k$ of degree $2$, with an involution $\sigma$ of the first kind, first type and $h$ is a non-degenerate hermitian form relative to $\sigma$. It is a non-quasi-split form of $SO_{10}$. Its Tits index notation is $^1D^2_{5,1}$, and its Tits index diagram is:
    \[
    \begin{tikzpicture}
    \path (0,0) node (a1) {$\bullet$}
          (1,0) node (a2) {$\bullet$}
          (2,0) node (a3) {$\bullet$}
          (3,0.5) node (a4) {$\bullet$}
          (3,-0.5) node (a5) {$\bullet$};
    \draw [-] (a1) -- (a2) -- (a3) -- (a4);
    \draw [-] (a3) -- (a5);
    \draw (a2) circle[radius=2mm];
    \end{tikzpicture}
    \]
\end{enumerate}

\bibliographystyle{amsplain}
\bibliography{bibliography}
\end{document}